\newcommand{\N}{\mathbb{N}}
\newcommand{\Z}{\mathbb{Z}}
\newcommand{\R}{\mathbb{R}}
\newcommand{\Q}{\mathbb{Q}}
\newcommand{\C}{\mathbb{C}}
\renewcommand{\P}{\mathbb{P}}
\newcommand{\abs}[1]{\left| #1 \right|}
\newcommand{\rb}[1]{\left( #1 \right)}
\newtheorem{theorem}{Theorem}[section]
\newtheorem{lemma}[theorem]{Lemma}
\newtheorem{corollary}[theorem]{Corollary}
\newtheorem{proposition}[theorem]{Proposition}
\newtheorem{conjecture}[theorem]{Conjecture}
\theoremstyle{definition}
\newtheorem{definition}[theorem]{Definition}
\theoremstyle{remark}
\newtheorem{remark}{Remark}
\newtheorem*{remark*}{Remark}
\begin{document}
 \selectlanguage{english}
 \title{Multiplicative automatic sequences}
\author[J.\ Konieczny]{Jakub Konieczny}
\address[J.\ Konieczny]{Camille Jordan Institute, 
Claude Bernard University Lyon 1,
43 Boulevard du 11 novembre 1918,
69622 Villeurbanne Cedex, France}
\address{Faculty of Mathematics and Computer Science, Jagiellonian University in Krak\'{o}w, \L{}ojasiewicza 6, 30-348 Krak\'{o}w, Poland\newline}
\email{jakub.konieczny@gmail.com}

\author{Mariusz Lema\'nczyk}
\address[ML]{Faculty of Mathematics and Computer Science, Nicolaus Copernicus University, Chopin street 12/18, 87-100 Toru\'n, Poland}
\email{mlem@mat.umk.pl}

\author{Clemens M\"ullner}
\address[CM]{Institut f\"ur Diskrete Mathematik und Geometrie
TU Wien\\
Wiedner Hauptstr. 8--10\\
1040 Wien, Austria}
\email{clemens.muellner@tuwien.ac.at}

\thanks{ The first-named author was supported by ERC grant ErgComNum 682150 and is currently working within the framework of the LABEX MILYON (ANR-10-LABX-0070) of Universit\'{e} de Lyon, within the program "Investissements d'Avenir" (ANR-11-IDEX-0007) operated by the French National Research Agency (ANR). He also acknowledges support from the Foundation for Polish Science (FNP).
The research of the second-named author was supported by  Narodowe Centrum Nauki grant UMO-2019/33/B/ST1/00364.
The third-named author was supported by European Research Council (ERC) under the European Union’s Horizon 2020 research and innovation programme under the Grant Agreement No 648132. }

 \maketitle
  \begin{abstract}
We obtain a complete classification of complex-valued sequences which are both multiplicative and automatic.
	\end{abstract}

\section{Introduction}
A sequence is automatic if it is accepted by a finite automaton. Such sequences are interesting objects to investigate from various points of view: computer science, information theory, linguistics, as well as dynamics and number theory, see e.g.~\cite{Allouche2003}, \cite{Fogg2002a}, \cite{Queffelec2010}. In particular, a recent motivation to study dynamics of systems determined by automatic sequences appeared in the context of the celebrated Sarnak's conjecture on M\"obius disjointness \cite{Sarnak2011}, and indeed the third author proved its validity for all automatic sequences \cite{Muellner2017}. If the M\"obius function is replaced with an arbitrary bounded multiplicative function then the relevant disjointness question has a more complicated answer. Indeed, the second and the third author in \cite{Lemanczyk2018} proved that if $a:\N\to \C$ is a
primitive\footnote{A morphism (or substitution) $\theta: \mathcal{A} \to \mathcal{A}^{*}$ is called \emph{primitive}, if there exists $n \in \N$, such that for all $a,b \in \mathcal{A}$, $b$ occurs in $\theta^n(a)$. An automatic sequence in turn is called \emph{primitive}, if it is given as the coding of a fixed point of a primitive constant-length substitution.} automatic sequence then it is disjoint with any
bounded, aperiodic, multiplicative\footnote{The sequence $u$ is {\em multiplicative} if $u(mn)=u(m)u(n)$ for all coprime $m,n\in\N$ and $u$ is {\em aperiodic} if $\lim_{N\to\infty}\frac1N\sum_{n=1}^Na(cn+d)=0$ for all $c,d\in\N$.} function $u : \N\to \C$, i.e.\
\[\lim_{N\to\infty}\frac1N\sum_{n=1}^Na(n)\overline{u}(n)=0.\]
The arithmetic M\"obius function is of course aperiodic (and multiplicative), but it is easy to find examples of bounded multiplicative functions which are not aperiodic. In fact, Dirichlet characters provide a rich supply of (completely) multiplicative sequences that are periodic (and hence automatic).

Multiplicative functions which are automatic have been known for a long time.
Probably the most prominent example of a (non-periodic) multiplicative automatic sequence is a variant of the period-doubling sequence\footnote{The variant is given by $a(n) = (-1)^{\nu_2(n)}$, where $\nu_2$ denotes the $2$-adic valuation. The original period-doubling sequence is given by $a'(n) = \nu_2(n+1) \bmod 2$.} which was first studied in~\cite{Damanik2000}.
One can easily guess that multiplicative automatic sequences must possess additional interesting properties.
For example, it is proved in \cite{Schlage-Puchta2011}  that any sequence obeying the slightly stronger assumptions of being automatic, completely multiplicative and not taking the value $0$ must be Besicovitch almost periodic. This has been strengthened in \cite{Lemanczyk2018} to Weyl almost periodicity (in the primitive case).
Relatively recently, multiplicative automatic sequence have been studied in numerous papers, \cite{Allouche2018, Bell2012, Bell2014, Borwein, Borwein2009, Coons2010, Klurman2019a, Klurman2019, Konieczny2019, Li2017, Li2019, Schlage-Puchta2003, Yazdani2001}.
A particular focus was put on the study of the case of completely multiplicative automatic sequences \cite{Allouche2018}, \cite{Li2017} with the complete classification given in \cite{Li2019}.

Returning to the classification problem in full generality (without the assumption of complete multiplicativity), Bell, Bruin and Coons in \cite{Bell2012} (see Conjecture 7.3 therein) conjectured that for each such sequence $a$ there exists an  eventually periodic function $g$ such that $a(p) = g(p)$ for every prime $p$. This conjecture has been proved recently (in a stronger form), independently by Klurman-Kurlberg \cite{Klurman2019a} and the first author \cite{Konieczny2019}. Although, it is also possible to provide an ergodic theory type proof of the aforementioned conjecture following \cite{Lemanczyk2018}, we will not do it here but use the main result of \cite{Konieczny2019} and prove a complete classification of automatic sequences which are multiplicative. The main result of the paper is the following:\footnote{For a prime $p$, $\nu_p(n)$ denotes the $p$-adic valuation, i.e. the largest power of $p$ dividing $n$.}

	\begin{theorem}\label{th_main}
		Let $a: \N \to \C$ be a multiplicative and automatic sequence. Then $a$ is $p$-automatic for some prime $p$ and takes the form
		\begin{align}\label{forma}
			a(n) = f_1(\nu_p(n)) \cdot f_2(n/p^{\nu_p(n)}),
		\end{align}
		where $f_1: \N_0\to \C$ is eventually periodic, $f_1(0) = 1$ and $f_2: \N \to \C$ is multiplicative, eventually periodic and vanishes at all multiples of $p$.
Furthermore, any sequence given by~\eqref{forma} with these conditions is multiplicative and $p$-automatic and this decomposition is unique unless $a(n)$ is eventually periodic.
	\end{theorem}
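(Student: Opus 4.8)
The plan is to establish the ``furthermore'' part first, since it is elementary, and then the main implication.

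\textbf{The converse and uniqueness.} Given $a$ of the form \eqref{forma}, one verifies multiplicativity by cases on coprime $m,n$: if $p\nmid mn$ then $\nu_p(mn)=0$, and $a(mn)=f_2(mn)=f_2(m)f_2(n)=a(m)a(n)$ by $f_1(0)=1$ and multiplicativity of $f_2$; if $p\mid m$ and $p\nmid n$, then $mn/p^{\nu_p(mn)}=(m/p^{\nu_p(m)})\,n$ is a coprime factorisation and the same computation gives $a(mn)=a(m)a(n)$. For $p$-automaticity one builds two $p$-automata: one computing $n\mapsto f_1(\nu_p(n))$ by counting the initial block of zeros in the base-$p$ expansion read from the least significant digit (its length is $\nu_p(n)$), capping the count at the pre-period of $f_1$ and looping within its period, and passing on the first nonzero digit to an absorbing state carrying the value $f_1$ of the recorded count; and one computing $n\mapsto f_2(n/p^{\nu_p(n)})$ by prepending, to a least-significant-digit-first automaton for the eventually periodic (hence $p$-automatic) sequence $f_2$, a state that skips that initial block of zeros --- indeed the base-$p$ digits of $n/p^{\nu_p(n)}$ read from the bottom are exactly those of $n$ with the initial zeros deleted. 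Since $f_1,f_2$ are finitely valued, $a$ is then a finitely-valued product of two $p$-automatic sequences, hence $p$-automatic. For uniqueness, in any such decomposition evaluation at $n$ coprime to $p$ forces $f_2(n)=a(n)$, and $f_2$ vanishes on multiples of $p$, so $f_2(n)=a(n)\,\mathbf{1}_{p\nmid n}$ is determined; evaluation at $p^{j}$ (noting $f_2(1)=a(1)=1$ when $a\not\equiv 0$) forces $f_1(j)=a(p^{j})$, so $f_1$ is determined too. Thus for a fixed $p$ the decomposition is unique whenever $a\not\equiv 0$; and if $a$ were $p$-automatic and $q$-automatic for distinct primes $p\neq q$, Cobham's theorem \cite{Allouche2003} would force $a$ eventually periodic, so when $a$ is not eventually periodic the prime $p$, and hence the whole decomposition, is unique ($a\equiv 0$ being eventually periodic).

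\textbf{The main implication.} Let $a$ be multiplicative and automatic; we may assume $a\not\equiv 0$, so $a(1)=1$. The point of departure is that for \emph{every} prime $p$ the identity \eqref{forma} already holds with $f_1(j):=a(p^{j})$ and $f_2(n):=a(n)\,\mathbf{1}_{p\nmid n}$, purely by multiplicativity; here $f_2$ is multiplicative (a product of two multiplicative functions), vanishes on multiples of $p$, and $f_1(0)=1$. So it suffices to produce one prime $p$ for which these $f_1,f_2$ are eventually periodic --- the $p$-automaticity of $a$ then comes for free from the converse. If $a$ is itself eventually periodic, any $p$ works: the powers of $p$ are eventually periodic modulo the period of $a$, and $f_2$ is a product of eventually periodic sequences. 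So assume $a$ is not eventually periodic. By the main result of \cite{Konieczny2019} (the strengthened form of the Bell--Bruin--Coons conjecture, also obtained in \cite{Klurman2019a}) there is an eventually periodic $g$ with $a(p)=g(p)$ for all primes $p$. The plan is then: (i) show that $a$ is $p$-automatic for a single prime $p$; (ii) deduce that $f_1=\bigl(a(p^{j})\bigr)_{j\ge 0}$ is eventually periodic --- reading $p^{j}$ in base $p$ from the bottom is a block of $j$ zeros followed by a single $1$, and the state of the defining automaton after $j$ zeros is eventually periodic in $j$, hence so is its output after the final $1$; and (iii) deduce that $f_2$ --- which is $p$-automatic, multiplicative, supported off the multiples of $p$, and equal to the eventually periodic $g$ on the primes $q\neq p$ --- is eventually periodic, by a direct analysis of its $p$-kernel using multiplicativity (or a further appeal to \cite{Konieczny2019} to control the values $a(q^{j})$, $j\ge 2$, uniformly in $q$). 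With (i)--(iii) the decomposition \eqref{forma} of the required shape is in place.

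\textbf{The main obstacle.} The one substantial step is (i): extracting from ``automatic $+$ multiplicative $+$ eventually periodic on primes $+$ not eventually periodic'' that a \emph{single} prime $p$ governs the base, i.e.\ that $a$ is genuinely $p$-automatic and is eventually periodic away from $p$. The difficulty is that the base $k$ for which $a$ is known to be automatic may a priori have several prime factors, so one must rule out genuinely ``two-dimensional'' multiplicative behaviour; this is exactly where Cobham's theorem (a sequence automatic in two multiplicatively independent bases is eventually periodic) must be combined with the strengthened Bell--Bruin--Coons result and an analysis of the $k$-kernel of $a$ under multiplicativity. Everything around this reduction --- the two automaton constructions, the elementary multiplicativity computations, and the eventual periodicity of $f_1$ and $f_2$ --- is routine.
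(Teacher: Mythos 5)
Your converse direction and uniqueness argument are correct and match the paper's elementary Lemmas and Proposition. But for the main implication you have essentially written a plan rather than a proof, and the plan itself underestimates the difficulty in two places.

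First, step (i) — showing that $a$ is $p$-automatic for a \emph{single} prime $p$ — is not carried out; you explicitly acknowledge it as ``the main obstacle'' and gesture at combining Cobham with the strengthened Bell--Bruin--Coons result and a kernel analysis, but you do not say how. This is where the bulk of the paper's work lives. The paper first upgrades the conjecture to the form of Theorem~\ref{th_konieczny} (Corollary~\ref{c:KonMul}): there are coprime $h,\lambda$ with $a$ $\lambda$-automatic and $a(n)=\chi(n)$ for \emph{all} $n$ coprime to $h\lambda$, where $\chi$ is a Dirichlet character mod $h\lambda$ or is identically zero; the weak statement ``$a(p)=g(p)$ on primes'' that you quote is not strong enough. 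The proof then splits into the dense case ($\chi$ a character) and the sparse case ($\chi\equiv 0$). In the dense case, the decomposition in Proposition~\ref{pr_decomposition_1} is established and then Lemma~\ref{lem:a(p^gamma)-ev-constant} (a pigeonhole argument on the $\lambda$-kernel combined with the Chinese Remainder Theorem) shows that if $\lambda$ is \emph{not} a prime power then every factor of the decomposition is periodic, so $a$ itself is eventually periodic; thus genuinely non-periodic $a$ forces $\lambda$ to be a prime power. In the sparse case the paper uses the pumping lemma (Proposition~\ref{pr_sparse_p}) to show that if $\alpha\mapsto a(p^\alpha)$ is infinitely supported, then either $p\mid h$ leads to a contradiction or $p\mid\lambda$ and $\lambda$ must be a power of $p$ via an equidistribution argument on $(k+(nL+1)\ell)\log_p\lambda \bmod 1$. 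None of this is in your proposal.

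Second, step (iii) — eventual periodicity of $f_2$ — is not ``routine by a direct analysis of its $p$-kernel''. In the dense case this is precisely the content of Lemma~\ref{lem:a(q^gamma)-ev-constant}, which is the most technical argument in the paper: it requires finding, via pigeonhole on the $\lambda$-kernel, integers $r_1\equiv r_2\equiv 1\bmod h\lambda$ with $a(n\lambda^k+r_1)=a(n\lambda^k+r_2)$, and then a delicate CRT construction to force both that $\gamma\mapsto a(q^\gamma)/\chi(\overline q)^\gamma$ is eventually constant and that, if it is not eventually zero, the local character $\chi_{q^{\alpha(q)}}$ must be trivial. The latter dichotomy is essential: without it the product factor $\chi_{q^{\alpha(q)}}(n/q^{\nu_q(n)})\cdot a(q^{\nu_q(n)})/\chi(\overline q)^{\nu_q(n)}$ need not be periodic. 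Your parenthetical fallback (appeal to \cite{Konieczny2019} for $a(q^j)$, $j\ge 2$) only controls primes coprime to $h\lambda$; the delicate part is exactly the finitely many primes dividing $h\lambda$, and there is no shortcut. So the proposal has the right skeleton and cites the right results, but the two nontrivial steps remain unproved.
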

%\footnote{JK: I don't think it's a good idea to place the additional requirement $f_2(pn) = 0$ inside the theorem. After all, the variant with this assumption follows immediately from the variant without it. I would suggest we do one of the following: (a) remove that condition and explain in the remark that we can additionally require that $f_2(pn) = 0$ and that then we get uniqueness; (b) place the uniqueness already in the theorem. Would either of the two be acceptable?}
%\footnote{JK: There is a slight subtlety/techinicality here. What is the ``correct'' domain for $a$? Candidates are $\N = \{1,2,3,\dots\}$ and $\N_0 = \{0,1,2,3,\dots\}$. Automatic sequences naturally live on $\N_0$, and multiplicative sequences naturally live on $\N$. I don't have strong opinions on this, but we should be consistent. CM: I am in favor of keeping $\N$, as our formula does not work for $n = 0$.}
	\begin{remark}
		The condition that $f_2$ vanishes at the multiples of $p$ seems superfluous, but ensures the uniqueness of the decomposition if $a(n)$ is not eventually periodic.\footnote{Let us assume that we have a decomposition like in Theorem~\ref{th_main}, but without asking $f_2$ to vanish at multiples of $p$. Then we can define a multiplicative function via $g_2(p^k) = 0, g_2(q^k) = f_2(q^k)$ for all primes $q\neq p$ and $k\geq 1$. It follows directly that we can replace $f_2$ by $g_2$ in ~\eqref{forma} and $g_2$ is also eventually periodic as it is just the product of $f_2$ with the indicator function of integers coprime to $p$ which is periodic.} Moreover,  the classification of completely multiplicative automatic sequences obtained by Li~\cite{Li2019} appears as a special case of Theorem~\ref{th_main}, see Section~\ref{s:Remarks} for details.
	\end{remark}
	
	To further motivate Theorem \ref{th_main}, let us put it in a wider context. For a sequence $f \colon \N \to \C$, it is natural to inquire that the associated generating series $F(z) = \sum_{n \geq 1} f(n) z^n$ is algebraic. Since many sequences of number-theoretic interest are multiplicative, the problem of verifying algebraicity of generating series of multiplicative sequences has been thoroughly investigated. The definitive result in this area is the following theorem by Bell, Bruin and Coons~\cite{Bell2012}. It is a generalization of the complex-valued case solved by B\'ezivin~\cite{Bezivin1995}.	

	\begin{theorem}
		Let $K$ be a field of characteristic $0$, let $f: \N \to K$ be a multiplicative function, and suppose that its generating series $F (z) = \sum_{n\geq 1}f (n) z^n$ is algebraic over $K(z)$. Then either there is a natural number $k$ and a periodic multiplicative function $\chi: \N \to K$ such that $f (n) = n^k \chi(n)$ for all $n$, or $f (n)$ is eventually zero.
	\end{theorem}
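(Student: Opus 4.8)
The plan is to prove that, whenever $f$ is not eventually zero, the generating series $F$ must in fact be \emph{rational}, and then to extract the normal form $f(n)=n^k\chi(n)$ from the structure of rational generating series of multiplicative functions. First I would reduce to the case $K=\C$: choosing a minimal polynomial $\sum_j a_j(z)Y^j$ for $F$ over $K(z)$ with $a_j\in K[z]$, the subfield of $K$ generated over $\Q$ by the coefficients of the $a_j$ is finitely generated, and solving the algebraic equation coefficient-by-coefficient shows that every value $f(n)$ lies in a fixed finitely generated extension of $\Q$, which embeds into $\C$. Over $\C$ an algebraic $F$ is holomorphic on a disc $\{|z|<\rho\}$, $\rho>0$; it extends to a finitely branched algebraic function on $\C$, all of whose singularities are algebraic numbers and near each of which it has a Puiseux expansion; and it is D-finite, so $(f(n))_{n\ge1}$ obeys a linear recurrence $\sum_{i=0}^{r}p_i(n)f(n-i)=0$ with polynomial coefficients and $p_0\not\equiv0$. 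I would also use freely that algebraicity is preserved by passing to arithmetic-progression sections $\sum_{n\equiv a\,(q)}f(n)z^n$ and by taking Hadamard products with rational series. From here on assume $f$ is not eventually zero.

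The heart of the matter, and the step I expect to be by far the hardest, is to show that $F$ has no branch point and is therefore rational. Algebraicity makes $(f(n))$ simultaneously P-recursive and of at most exponential growth, and combining this with the relations $f(mn)=f(m)f(n)$ is very rigid; the real difficulty is to play the essentially \emph{additive} structure of the recurrence against the \emph{multiplicative} structure of $f$, and in particular to control $f$ on prime powers, where the recurrence does not directly ``reach''. Concretely I would argue in three moves: (i) using the recurrence together with multiplicativity applied to products such as $f(p^aq^b)=f(p^a)f(q^b)$ over many pairs of large prime powers, show that $\rho=1$, so that every singularity of $F$ lies on the unit circle and is of power type, whence $|f(n)|\le Cn^{C}$; (ii) if $F$ still had a branch point $z_0$ with $|z_0|=1$ --- necessarily an algebraic point of the unit circle --- then, after normalising it via a section to $z_0=1$, singularity analysis would give an asymptotic expansion of $f(n)$ along the relevant progression involving a genuinely fractional power of $n$; (iii) feeding this expansion back into the multiplicative relations over-determines it, forcing every exponent to be an integer and annihilating the fractional Puiseux part --- a contradiction. (This is, specialised to the algebraic case, the content of B\'ezivin's theorem on D-finite multiplicative functions. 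An alternative for $f$ valued in a number field would be to reduce $F$ modulo a prime, invoke Christol's theorem to obtain $q$-automaticity of $f\bmod\mathfrak p$, and appeal to a finite-field analogue of Theorem~\ref{th_main}, but no such analogue is available from the results quoted here.)

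Finally, once $F=P/Q$ is rational, $f(n)$ agrees for all large $n$ with an exponential polynomial $\sum_j c_j\lambda_j^{n}n^{e_j}$ whose frequencies $\lambda_j$ are the inverse roots of $Q$. Comparing magnitudes in $f(pq)=f(p)f(q)$ along large primes forces $|\lambda_j|=1$, and since the $\lambda_j$ are algebraic they are roots of unity; comparing further $f(mp^a)=f(m)f(p^a)$ for fixed $m$ and growing primes $p$ collapses the expansion to a single exponent $e_j=k$, and shows that $f(n)=n^k\chi(n)$ for all large $n$ with $\chi(n):=f(n)/n^k$ periodic. Multiplicativity of $f$ then propagates the identity $f(n)=n^k\chi(n)$ from large $n$ to all $n$ --- using $\chi\not\equiv0$, which holds because otherwise $f$ would be eventually zero, contrary to hypothesis --- and shows $\chi$ is a periodic multiplicative function with $\chi(1)=1$, which is precisely the asserted form.
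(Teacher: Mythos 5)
First, a point of calibration: the paper does not prove this statement at all --- it is quoted as background from Bell--Bruin--Coons \cite{Bell2012} (generalizing B\'ezivin \cite{Bezivin1995}), so there is no in-paper proof to compare yours against; the comparison is with that cited work, whose overall architecture (reduce to $\C$, bound the growth, rule out branch points so that $F$ is rational, then read off the form $n^k\chi(n)$ from the exponential-polynomial representation) your outline indeed mirrors.

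Judged as a proof, however, your proposal has a genuine gap at exactly the step you flag as the hardest: moves (i)--(iii) --- the polynomial growth bound $|f(n)|\le Cn^{C}$, the exclusion of a unit-circle branch point, and hence rationality of $F$ --- are not carried out but are explicitly deferred to ``the content of B\'ezivin's theorem,'' which over $\C$ is precisely the statement you are asked to prove. As written the argument is therefore circular rather than incomplete in minor ways: the interplay between the P-recursion and multiplicativity that is supposed to force $\rho=1$ and kill fractional Puiseux exponents is described only as ``feeding the expansion back into the multiplicative relations over-determines it,'' and this is where essentially all of the work in \cite{Bezivin1995} and \cite{Bell2012} lies (note also that multiplicative functions are unconstrained on prime powers, so exponential growth is not ruled out by multiplicativity alone, and singularity analysis at several unit-circle singularities requires genuine care). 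The surrounding steps are fine in outline: the reduction to $\C$ via a finitely generated coefficient field is sound, and the endgame (rational $F$ gives $f(n)=\sum_j c_j\lambda_j^n n^{e_j}$ for large $n$, multiplicativity along $f(pq)=f(p)f(q)$ and $f(mp^a)=f(m)f(p^a)$ forces $|\lambda_j|=1$, roots of unity, a single integer exponent $k$, and then eventual periodicity of $f(n)/n^k$ upgrades to genuine periodicity by the same mechanism as Lemma~\ref{le_eventually_periodic}, using that $f$ is not eventually zero) is a legitimate sketch, though the magnitude comparisons need lower bounds on $|f|$ along suitable products to be made rigorous. To have a proof you would need to supply the rationality step in full, or else honestly cite B\'ezivin's theorem and present only the reduction of the general characteristic-zero case to it.
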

	
	The analogous question can be posed for sequences taking values in finite fields.
	In this case, there is a classical characterization of algebraic formal power series due to Christol~\cite{Christol1979}.
	\begin{theorem}
		Let $q = p^k$ be a prime power, let $\mathbb{F}_q$ be a finite field of size $q$, and $(a(n))_{n\geq 0}$ a sequence with values in $\mathbb{F}_q$. Then, the sequence $(a(n))_{n\geq 0}$ is $p$-automatic if and only if the formal power series $\sum_{n\geq 0} a(n) X^n$ is algebraic over $\mathbb{F}_q(X)$.
	\end{theorem}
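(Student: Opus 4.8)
The plan is to reformulate Christol's equivalence in terms of the Cartier (section) operators and to treat the two implications separately. Since $p$ and $q=p^{k}$ are multiplicatively dependent, I would first reduce to working with the $q$-kernel: a sequence over $\mathbb{F}_q$ is $p$-automatic exactly when it is $q$-automatic (this is classical). For $0\le r<q$ let $\Lambda_{r}$ act on $\mathbb{F}_q((X))$ by $\Lambda_{r}\!\left(\sum_{n}c_{n}X^{n}\right)=\sum_{n}c_{qn+r}X^{n}$; since $c^{q}=c$ for $c\in\mathbb{F}_q$ one has the two identities $g=\sum_{r=0}^{q-1}X^{r}\Lambda_{r}(g)^{q}$ and $\Lambda_{r}(u^{q}g)=u\,\Lambda_{r}(g)$. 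Writing $F(X)=\sum_{n\ge 0}a(n)X^{n}$, composing $t$ of the operators $\Lambda_{r}$ produces $\sum_{n}a(q^{t}n+b)X^{n}$ for the various $b\in\{0,\dots,q^{t}-1\}$, so the generating series of the members of the $q$-kernel of $(a(n))$ are precisely the elements of the orbit of $F$ under the monoid generated by $\Lambda_{0},\dots,\Lambda_{q-1}$. Thus the theorem becomes: this orbit is finite if and only if $F$ is algebraic over $\mathbb{F}_q(X)$.

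For the implication ``automatic $\Rightarrow$ algebraic'' I would argue as follows. Let $\{F=f_{1},\dots,f_{m}\}$ be the (finite) orbit; the first identity gives $f_{i}=\sum_{j=1}^{m}M_{ij}(X)f_{j}^{q}$ with $M_{ij}=\sum_{r\,:\,\Lambda_{r}f_{i}=f_{j}}X^{r}\in\mathbb{F}_q[X]$. Put $V_{i}=\mathrm{span}_{\mathbb{F}_q(X)}\{f_{1}^{q^{i}},\dots,f_{m}^{q^{i}}\}\subseteq\mathbb{F}_q((X))$. Raising the relations to the $q^{i}$-th power and using that $M_{jk}^{q^{i}}$ is a polynomial in $X^{q^{i}}$, hence lies in $\mathbb{F}_q(X)$, shows $V_{i}\subseteq V_{i+1}$; each $V_{i}$ has $\mathbb{F}_q(X)$-dimension at most $m$, so the chain stabilizes to a finite-dimensional, Frobenius-stable subspace $V^{\ast}$ containing $F$. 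Then $F,F^{q},\dots,F^{q^{m}}$ all lie in $V^{\ast}$ and therefore satisfy a nontrivial relation $\sum_{i=0}^{m}A_{i}(X)F^{q^{i}}=0$ with $A_{i}\in\mathbb{F}_q(X)$ not all zero, which exhibits $F$ as a root of the nonzero polynomial $\sum_{i}A_{i}Y^{q^{i}}$ and hence as an algebraic element.

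For the converse, ``algebraic $\Rightarrow$ automatic'', the goal is to construct a \emph{finite-dimensional} $\mathbb{F}_q$-vector space $\mathcal{V}\subseteq\mathbb{F}_q((X))$ that contains $F$ and is stable under every $\Lambda_{r}$; being finite-dimensional over the finite field $\mathbb{F}_q$ it is then a finite set, so the orbit of $F$ is finite and the previous paragraph applies. Starting from a polynomial $P(X,Y)=\sum_{j=0}^{n}A_{j}(X)Y^{j}$ with $A_{n}\ne 0$ and $P(X,F)=0$, the extension $L=\mathbb{F}_q(X)(F)$ is finite and has a basis $F=h_{1},h_{2},\dots,h_{d}$ over $\mathbb{F}_q(X)$ consisting of powers of $F$, hence of power series in $\mathbb{F}_q[[X]]$ (if $d=1$ then $F$ is rational and we are done directly). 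I would then fix a single nonzero $D\in\mathbb{F}_q[X]$ that is a common denominator for all the data that occurs (e.g. a sufficiently high power of $A_{n}$) and set $\mathcal{V}=\tfrac1D\bigl\{\sum_{j=1}^{d}P_{j}(X)h_{j}:\deg P_{j}<B\bigr\}$ for a constant $B$ to be chosen. Using $\Lambda_{r}(u^{q}g)=u\,\Lambda_{r}(g)$, one rewrites an element of $\mathcal{V}$ over the $q$-th power $D^{q}$ and checks that $\Lambda_{r}$ sends it to $\tfrac1D$ times a combination of the $h_{j}$ whose numerator degrees are at most $B/q$ plus a constant depending only on $D$, $P$ and the $h_{j}$; since this bound \emph{contracts} under $\Lambda_{r}$, it stays below a fixed $B=B_{0}$, and then $\mathcal{V}$ with $B=B_{0}$ is $\Lambda_{r}$-stable and contains $F$.

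The step I expect to be the main obstacle is precisely this last construction in the ``algebraic $\Rightarrow$ automatic'' direction: verifying that one \emph{fixed} polynomial $D$ simultaneously clears the denominators of \emph{all} iterated sections of $F$, and that applying $\Lambda_{r}$ genuinely divides the numerator degrees by $q$ rather than merely keeping them bounded (so that the relevant $\mathbb{F}_q$-span is finite, not merely that the orbit sits inside the fixed extension $L$). This rests on the standard but fiddly commutative algebra attached to $P$ — finiteness of the integral closure of $\mathbb{F}_q[X]$ in $L$ as a module, the behaviour of Frobenius on it, and the conductor ideal — together with the elementary bookkeeping of how $\Lambda_{r}$ interacts with multiplication by $X^{j}$ and with $q$-th powers. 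By contrast the ``automatic $\Rightarrow$ algebraic'' direction is just the short chain-stabilization argument above, and the reduction between $p$- and $q$-automaticity is classical.
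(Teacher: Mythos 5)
The paper does not prove this theorem: it cites Christol~\cite{Christol1979} and uses it as a black box, so there is no proof of record to compare yours against. On its own terms, your reduction from $p$- to $q$-automaticity and your ``automatic $\Rightarrow$ algebraic'' direction are correct. The chain $V_i=\mathrm{span}_{\mathbb{F}_q(X)}\{f_1^{q^i},\dots,f_m^{q^i}\}$ is increasing because $M_{jk}^{q^i}\in\mathbb{F}_q[X]\subset\mathbb{F}_q(X)$, it is bounded in dimension by $m$, so it stabilizes to a Frobenius-stable $V^\ast$ of dimension at most $m$ containing $F$; the resulting linear dependence among $F,F^q,\dots,F^{q^m}$ gives the additive polynomial annihilating $F$.

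The ``algebraic $\Rightarrow$ automatic'' direction, however, will not go through in the form you sketch. You pick a basis $h_1,\dots,h_d$ of $L=\mathbb{F}_q(X)(F)$ consisting of \emph{ordinary} powers $F^{j}$ and hope that $\Lambda_r$ contracts numerator degrees in that basis. But the only tool available is the identity $\Lambda_r(u^q g)=u\,\Lambda_r(g)$, which requires the carrier $u$ to be a $q$-th power; $\Lambda_r\bigl(P(X)\,F^{j}\bigr)$ for a generic exponent $j$ is not visibly an $\mathbb{F}_q(X)$-combination of the $h_j$ with smaller numerators, and it is not even immediate that $\Lambda_r(L)\subseteq L$ (this is true, but only because $\mathbb{F}_q((X))/\mathbb{F}_q(X)$ is separable, a fact you would have to invoke and which your sketch does not). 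The fix, and what Christol's proof actually does, is to abandon the ordinary-power basis and the minimal polynomial in favour of the additive (Ore) relation $A_0 F=\sum_{i\geq 1}A_i F^{q^i}$ with $A_i\in\mathbb{F}_q[X]$ and $A_0\neq 0$ — establishing $A_0\neq 0$ requires its own short digit-decomposition argument — and then to work with the \emph{spanning set} $\{F^{q^0},\dots,F^{q^{s-1}}\}$ rather than a basis. In that set-up $\Lambda_r\bigl(P_i(X)F^{q^i}\bigr)=\Lambda_r(P_i)\cdot F^{q^{i-1}}$ genuinely divides degrees by $q$, and the fixed denominator is $A_0$, not a power of the leading coefficient $A_n$ as you propose. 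No integral closure, conductor ideal, or module-finiteness is needed; the argument is purely elementary polynomial degree bookkeeping once the Ore relation is in hand.
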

	
	Thus, Theorem~\ref{th_main} can also be applied to obtain a characterization of all multiplicative functions $a: \N \to \mathbb{F}_q$ such that $\sum_{n\geq 1} a(n) X^n$ is algebraic over $\mathbb{F}_q(X)$.

The paper is structured as follows.
In Section 2 we give an overview of relevant basic facts about automatic sequences.
Then we provide some comments on the main theorem in Section 3, to put it into the correct context.
%Section 4 is dedicated to some more results about automatic sequences, most of which are classical.
In Section 4 we give some final auxiliary results and review the solution of Bell--Bruin--Coons conjecture.
The proof of the main theorem is then split into two Sections 5 and 6 for the dense and sparse case, respectively.
Section 7 finishes the paper with some remarks.

\subsection*{Notation}
Throughout the paper we let $\N_0$ denote the non-negative integers and $\N$ the positive integers. Furthermore, $\C$ denotes the complex numbers and $p,q$ will always be prime numbers.

\section{Automatic sequences}
In this section we review some basic facts concerning automatic sequences. We assume no familiarity with the subject, and for the convenience of the reader we sketch the proofs of even some well-known facts. For a comprehensive introduction, see \cite{Allouche2003}.

There are several equivalent ways to define automatic sequences: primarily, these are sequences that are accepted by finite automata, or the letter-to-letter codings of fixed points of substitutions of constant length. A classical theorem due to Cobham \cite{Cobham1972} characterizes automatic sequences in terms of their kernels. For a sequence $(f(n))_{n \geq 0}$ and $\lambda \geq 2$, the \emph{$\lambda$-kernel} of $f$ is the set of all the possible restrictions of $f$ to an infinite arithmetic progression whose step is a power of $\lambda$, that is, the set
\begin{equation}\label{eq:def-of-kernel}
  			\left\{(f(n \lambda^k + r))_{n\geq 0} \ : \  k \geq 0, 0 \leq r < \lambda^{k}\right\}.
\end{equation}
  	\begin{lemma} [\cite{Cobham1972}]\label{lem:kernel} Given an integer $\lambda \geq 2$,
  		a sequence $f: \N_0 \to \C$ is $\lambda$-automatic if and only if the $\lambda$-kernel of $f$, given by \eqref{eq:def-of-kernel},   		is finite.
  	\end{lemma}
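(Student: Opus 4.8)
The plan is to use the description of $\lambda$-automatic sequences as those produced by a deterministic finite automaton with output that reads the base-$\lambda$ expansion of $n$ and applies an output function to the state reached. Fix such an automaton $(Q,\Sigma_\lambda,\delta,q_0,\tau)$: $Q$ finite, input alphabet $\Sigma_\lambda=\{0,1,\dots,\lambda-1\}$, transition function $\delta$ extended to finite words in the usual way, start state $q_0$, and output map $\tau\colon Q\to\C$, so that $f(n)=\tau(\delta(q_0,w_n))$ where $w_n$ is the base-$\lambda$ expansion of $n$ read from the least significant digit. One may, and I would, normalise so that appending leading zeros to the input does not change the output; every $\lambda$-automatic sequence comes from such a normalised automaton. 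Then the two implications are proved separately.

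For the ``only if'' direction, the key point is that every element of the $\lambda$-kernel is again computed by an automaton with the same $(Q,\Sigma_\lambda,\delta,\tau)$ but a different start state. For $0\le r<\lambda^k$, the expansion of $\lambda^k n+r$ (with $n\ge1$) is the concatenation $u_r w_n$, where $u_r$ is the length-$k$ expansion of $r$; hence $f(\lambda^k n+r)=\tau(\delta(\delta(q_0,u_r),w_n))$, and the case $n=0$ is absorbed by the leading-zero normalisation. So $(f(\lambda^k n+r))_{n\ge0}$ is the sequence computed from the start state $\delta(q_0,u_r)$, and as $k,r$ vary this state ranges over a subset of $Q$. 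Therefore the $\lambda$-kernel has at most $\abs{Q}$ elements, and in particular is finite.

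For the ``if'' direction, suppose the $\lambda$-kernel $\mathcal{K}$ is finite; I would build an automaton directly from it. Let the state set be $\mathcal{K}$, with start state $f$ itself (the kernel element with $k=0$), transition $\delta(g,d)=(g(\lambda n+d))_{n\ge0}$, and output $\tau(g)=g(0)$. One checks $\delta$ is well defined, i.e.\ that $(g(\lambda n+d))_{n\ge0}\in\mathcal{K}$ whenever $g\in\mathcal{K}$: if $g(n)=f(\lambda^k n+r)$ with $0\le r<\lambda^k$, then $g(\lambda n+d)=f(\lambda^{k+1}n+(\lambda^k d+r))$ with $0\le\lambda^k d+r<\lambda^{k+1}$. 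An induction on $\abs{w}$ then shows that reading $w$ from the state $f$ lands in the state $(f(\lambda^{\abs{w}}n+[w]_\lambda))_{n\ge0}$, where $[w]_\lambda$ is the integer with expansion $w$; taking $w=w_m$ and reading off the output yields $f(m)$, while $[0w]_\lambda=[w]_\lambda$ shows leading zeros do no harm. Since $\mathcal{K}$ is finite this is a genuine finite automaton, so $f$ is $\lambda$-automatic.

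Neither construction is deep; the only place that needs care is the bookkeeping around base-$\lambda$ representations — fixing a reading direction once and for all, choosing a convention for the expansion of $0$, and using the leading-zero normalisation consistently in both directions, in particular for the $n=0$ terms of the kernel sequences. Once these conventions are pinned down the argument is a routine verification, and is in substance Cobham's original proof.
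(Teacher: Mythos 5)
Your proof is correct. Note that the paper does not prove this lemma at all: it is quoted from Cobham's work as a classical fact, so there is no internal argument to compare against. What you give is the standard kernel/automaton correspondence --- in one direction the kernel sequences are exactly the sequences computed by the same automaton started from the states $\delta(q_0,u_r)$, so the kernel has at most $\abs{Q}$ elements; in the other, the finite kernel itself serves as the state set with transitions $g \mapsto (g(\lambda n+d))_{n\geq 0}$ and output $g(0)$ --- and the only delicate points (reading direction, the expansion of $0$, leading zeros and the $n=0$ terms) are the ones you flag and handle correctly, so the argument is complete and is in substance Cobham's original proof.
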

We can extend the definition of $\lambda$-automaticity to  $\lambda=1$ by declaring that
a sequence $f$ is $1$-automatic if and only if it is eventually periodic. In this case, $f$ is $\lambda$-automatic for any $\lambda \in \N$.  	
  	
  Note that, given $k$ and $r$, the first element of the sequence \eqref{eq:def-of-kernel} is $f(r)$ and since $r$ runs over all natural numbers as $k\to\infty$, any automatic sequence takes only finitely many values.

  The following lemma shows that the class of complex-valued automatic sequences is closed under the addition, multiplication, and any other entry-wise operation.
	\begin{lemma}\label{lem:auto-basic}
		Let $\lambda \in \N$ and let $(f(n))_{n \geq 0}$ and  $(g(n))_{n \geq 0}$ be $\lambda$-automatic sequences.
		\begin{enumerate}[wide]
		\item The sequence $\big( \pi(f(n)) )_{n \geq 0}$ is $\lambda$-automatic for any map $\pi$;
		\item The sequence $\big((f(n),g(n))\big)_{n \geq 0}$ is $\lambda$-automatic.
		\end{enumerate}
	\end{lemma}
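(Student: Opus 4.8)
The plan is to argue directly from Cobham's characterisation (Lemma~\ref{lem:kernel}), reducing both statements to a bound on the size of the relevant $\lambda$-kernels. For part (1), observe that every element of the $\lambda$-kernel of $\big(\pi(f(n))\big)_{n\geq 0}$ has the form $\big(\pi(f(n\lambda^k+r))\big)_{n\geq 0}$, that is, it is obtained by applying $\pi$ entrywise to the corresponding element $\big(f(n\lambda^k+r)\big)_{n\geq 0}$ of the $\lambda$-kernel of $f$. Hence the map sending a kernel element of $f$ to the corresponding kernel element of $\pi\circ f$ is surjective onto the latter kernel, so the $\lambda$-kernel of $\pi\circ f$ has cardinality at most that of the $\lambda$-kernel of $f$, which is finite by Lemma~\ref{lem:kernel}; applying Lemma~\ref{lem:kernel} in the other direction gives that $\pi\circ f$ is $\lambda$-automatic. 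The case $\lambda=1$ is immediate since an entrywise image of an eventually periodic sequence is eventually periodic.

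For part (2), let $h(n)=(f(n),g(n))$. A typical element of the $\lambda$-kernel of $h$ is $\big((f(n\lambda^k+r),g(n\lambda^k+r))\big)_{n\geq 0}$, which is completely determined by the pair consisting of the element $\big(f(n\lambda^k+r)\big)_{n\geq 0}$ of the $\lambda$-kernel of $f$ and the element $\big(g(n\lambda^k+r)\big)_{n\geq 0}$ of the $\lambda$-kernel of $g$. Thus there is an injection from the $\lambda$-kernel of $h$ into the Cartesian product of the $\lambda$-kernels of $f$ and of $g$, whence $\big|\text{$\lambda$-kernel of }h\big| \le \big|\text{$\lambda$-kernel of }f\big|\cdot\big|\text{$\lambda$-kernel of }g\big| < \infty$. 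By Lemma~\ref{lem:kernel}, $h$ is $\lambda$-automatic. Again the case $\lambda=1$ follows from the elementary fact that the "diagonal" of two eventually periodic sequences is eventually periodic (its eventual period divides the least common multiple of the two eventual periods).

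There is no serious obstacle here: the only thing to be a little careful about is the bookkeeping with the parameters $k$ and $r$ in the definition~\eqref{eq:def-of-kernel} of the kernel, namely that the \emph{same} pair $(k,r)$ is used simultaneously for $f$ and $g$ in part (2), which is exactly what makes the map into the product of kernels well defined. Everything else is a direct application of Lemma~\ref{lem:kernel}, together with the remark following it that an automatic sequence takes only finitely many values (so that $\pi$ need only be defined on this finite range). Part (1) is of course the special case of a more general principle — any finite collection of $\lambda$-automatic sequences can be combined entrywise by an arbitrary function — and one could alternatively deduce (1) for several inputs by first applying (2) repeatedly and then (1) with a single input; I would mention this but prove the two stated items directly as above.
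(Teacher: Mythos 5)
Your proof is correct and follows essentially the same route as the paper: both apply Cobham's kernel characterisation (Lemma~\ref{lem:kernel}) and bound the $\lambda$-kernel of $\pi\circ f$ by the size $N$ of the kernel of $f$, and that of $(f,g)$ by the product $NM$ of the two kernel sizes. You simply spell out the surjection/injection underlying those cardinality bounds and add the (easy, unstated in the paper) $\lambda=1$ case.
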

	\begin{proof}
	It is enough to show that the $\lambda$-kernels of the relevant sequences are finite. If the $\lambda$-kernel of $f$ and $g$ have cardinalities $N$ and $M$ respectively, then the $\lambda$-kernel of $\pi \circ f$ has at most $N$ elements and the $\lambda$-kernel of $(f,g)$ has at most $NM$ elements.
	\end{proof}
	
It is evident that if $f$ is a $\lambda$-automatic sequence, then so are all the sequences in its $\lambda$-kernel. More generally, one can check that if $f$ is $\lambda$-automatic then so is its restriction to any arithmetic progression $n \mapsto f(an+b)$ ($a \in \N$, $b \in \N_0$). We also have the following classical results about being automatic in two different bases.

\begin{lemma}\label{le_power_lambda}
	For any $\lambda \in \N$ and $k \geq 1$, a sequence is $\lambda$-automatic if and only if it is $\lambda^k$-automatic.
\end{lemma}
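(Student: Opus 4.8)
The plan is to use Cobham's characterization (Lemma \ref{lem:kernel}) and compare the $\lambda$-kernel with the $\lambda^k$-kernel. The easy direction is that $\lambda^k$-automatic implies $\lambda$-automatic: every arithmetic progression with step a power of $\lambda^k$ is in particular an arithmetic progression with step a power of $\lambda$, so the $\lambda$-kernel of $f$ contains the $\lambda^k$-kernel, hence if the former is finite so is... wait, that inclusion goes the wrong way. Let me restate: the $\lambda^k$-kernel is a \emph{subset} of the $\lambda$-kernel (since $(\lambda^k)^j = \lambda^{kj}$ and $0 \le r < \lambda^{kj}$ is one of the allowed residues in the $\lambda$-kernel at level $kj$). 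Therefore finiteness of the $\lambda$-kernel immediately gives finiteness of the $\lambda^k$-kernel, i.e.\ $\lambda$-automatic implies $\lambda^k$-automatic. That settles one implication with essentially no work.

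For the converse, suppose $f$ is $\lambda^k$-automatic, so its $\lambda^k$-kernel is finite; I want the $\lambda$-kernel to be finite. The key observation is that any sequence of the form $n \mapsto f(n\lambda^j + r)$ with $0 \le r < \lambda^j$ can be reached from an element of the $\lambda^k$-kernel by finitely many further decimations: write $j = k\lfloor j/k\rfloor + s$ with $0 \le s < k$, and note that decimating $\lambda^s$ more times and then $k$ at a time relates these progressions. More cleanly, I would prove the slightly more flexible statement that the set of all sequences $(f(an+b))_{n\ge 0}$ with $a$ a power of $\lambda$ and $0 \le b < a$ is generated from the $\lambda^k$-kernel by taking, at most $k-1$ times, the operation $g \mapsto (g(\lambda n + i))_{n \ge 0}$ for $0 \le i < \lambda$. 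Since each such operation multiplies the number of reachable sequences by at most $\lambda$, starting from a finite set of size $N$ one obtains a set of size at most $N\lambda^{k-1}$ that contains the entire $\lambda$-kernel; hence the $\lambda$-kernel is finite and $f$ is $\lambda$-automatic by Lemma \ref{lem:kernel}.

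Concretely, the bookkeeping runs as follows. Given $k \ge 1$ and a residue $r$ with $0 \le r < \lambda^j$, divide $j$ by $k$: $j = qk + s$ with $0 \le s < k$. One checks the identity
\begin{equation*}
	f\rb{n\lambda^{j} + r} = f\rb{\rb{m\lambda^{qk} + r'}\lambda^{s} + r''}
\end{equation*}
for the appropriate splitting $r = r''\lambda^{qk}\cdot(\text{stuff}) + \dots$; rather than track this precisely here, the clean way is to observe that an element of the $\lambda$-kernel at level $j$ is obtained from an element of the $\lambda^k$-kernel at level $qk$ (which lies in the $\lambda^k$-kernel, a finite set by hypothesis) by applying $s < k$ single decimation steps $g \mapsto (g(\lambda n + i))$, and each such step sends a $\lambda$-automatic-kernel element to another one while enlarging the reachable family by a factor of at most $\lambda$. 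Iterating at most $k-1$ times over the finitely many starting points keeps the family finite.

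The main obstacle is purely notational: getting the index arithmetic in the splitting $j = qk + s$ exactly right, including how the residue $r$ decomposes and which residues $i$ appear in the intermediate decimation steps. There is no conceptual difficulty — the whole content is that "decimate by $\lambda^k$ repeatedly" and "decimate by $\lambda$ repeatedly, in blocks of $k$" generate the same closure up to the finitely many sequences reachable in fewer than $k$ extra steps — but one must be careful that the residues stay in the correct ranges ($0 \le r < \lambda^j$ throughout) so that Lemma \ref{lem:kernel} applies verbatim.
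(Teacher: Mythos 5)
Your argument is correct and matches the paper's in essence: the easy direction via the containment $\lambda^k\text{-kernel} \subseteq \lambda\text{-kernel}$, and the converse via writing the exponent as a multiple of $k$ plus a remainder $j<k$. The paper phrases the converse as the containment of the $\lambda$-kernel of $f$ in the union of the $\lambda^k$-kernels of the finitely many restrictions $n \mapsto f(\lambda^j n + s)$ with $0\le j<k$, $0\le s<\lambda^j$, whereas you apply at most $k-1$ single decimations $g \mapsto (g(\lambda n + i))_{n\geq 0}$ to the $\lambda^k$-kernel of $f$ — the same decomposition read in the opposite order, with your version having the small virtue of not relying on closure under arithmetic-progression restrictions.
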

\begin{proof}
Indeed, the $\lambda^k$-kernel is contained in the $\lambda$-kernel, while the $\lambda$-kernel is contained in the union of $\lambda^k$-kernels of the sequences $n \mapsto f(\lambda^j n + s)$ with $0 \leq j < k$, $0 \leq s < \lambda^j$.
\end{proof}
 Another classical theorem of Cobham asserts that this equivalence is essentially the only case when a sequence is automatic with respect to two different bases.

	\begin{theorem}[\cite{Cobham1972}]\label{th_cobham}
		If a sequence $f$ is both $\lambda$ and $\mu$-automatic, where $\lambda$ and $\mu$ are multiplicatively independent integers (i.e.\ $\log \lambda/\log \mu \notin \Q$) then $f$ is eventually periodic.
	\end{theorem}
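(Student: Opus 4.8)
The plan is to play the two automaton structures on $f$ against each other, the bridge between them being a Diophantine approximation fact. Write $\alpha=\log\lambda/\log\mu$, which is irrational by the multiplicative independence hypothesis. By Weyl's equidistribution theorem (or just Kronecker's theorem) the sequence $(\{k\alpha\})_{k\ge 1}$ is dense in $[0,1)$, which translates into the statement that for every $\varepsilon>0$ there are infinitely many pairs $(k,\ell)\in\N\times\N$ with $1\le \lambda^k/\mu^\ell<1+\varepsilon$. So the two scales $\lambda^k$ and $\mu^\ell$ can be made to almost coincide, and it is this near-coincidence of scales that will force $f$ to be rigid. By Lemma~\ref{le_power_lambda} we may also freely replace $\lambda$ and $\mu$ by fixed powers of themselves, which remain multiplicatively independent, and we use this liberty to normalise the substitutions below.

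First I would pass to the substitutive picture: write $f=\tau\circ g$ with $g$ the fixed point of a $\lambda$-uniform substitution $\varphi$ on a finite alphabet $\mathcal A$ and $\tau$ a coding, and likewise $f=\tau'\circ g'$ with $g'$ a fixed point of a $\mu$-uniform substitution $\varphi'$. Then the factor of $g$ of length $\lambda^k$ occupying positions $[i\lambda^k,(i+1)\lambda^k)$ equals $\varphi^k(g(i))$, so the corresponding factor of $f$ equals $\tau(\varphi^k(g(i)))$ and is determined by the single letter $g(i)$; in particular $f$ has at most $|\mathcal A|$ such ``block'' factors, and symmetrically at most $|\mathcal A'|$ blocks of length $\mu^\ell$ sitting at multiples of $\mu^\ell$. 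The proof then splits according to whether $f$ is \emph{uniformly recurrent}, meaning every factor that occurs in $f$ occurs with bounded gaps. In the uniformly recurrent case one has the stronger property of \emph{linear recurrence}: there is a constant $C$ such that every factor of length $n$ occurring in $f$ occurs inside every factor of length $Cn$; this is the classical fact that fixed points of primitive substitutions are linearly recurrent, and a uniformly recurrent $\lambda$-automatic sequence is of this type. The general, non-recurrent case must be reduced to this one by peeling off, arithmetic progression by arithmetic progression, the parts of the $\lambda$-kernel on which $f$ is already eventually periodic, and inducting on the size of the remaining kernel; this bookkeeping is the least transparent part of the argument.

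The heart of the matter is the recurrent case. Fix a small $\varepsilon>0$, pick $(k,\ell)$ with $\lambda^k\le\mu^\ell<(1+\varepsilon)\lambda^k$, and cut a long prefix of $f$ simultaneously into the $\lambda$-blocks $\tau(\varphi^k(g(i)))$ of length $\lambda^k$ and into the $\mu$-blocks of length $\mu^\ell\approx\lambda^k$. Since the two block lengths differ by at most $\varepsilon\lambda^k$, the two tilings drift apart slowly, so over a stretch of $M$ blocks their cut points stay within $M\varepsilon\lambda^k$ of one another. A pigeonhole argument on the finitely many pairs of letters $(g(i),g'(j))$ carried by the $\lambda$-block and the $\mu$-block covering a given position then produces two positions $x<x'$, with $d:=x'-x\le\varepsilon\lambda^k$, at which the local data of both automata coincide; from this coincidence one extracts a genuine identity $f(x+m)=f(x'+m)$ valid for every $m$ in an interval of length at least $(1-O(\varepsilon))\lambda^k$, so that $f$ is exactly $d$-periodic on a window of length $\gg Cd$. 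Linear recurrence then forces this $d$-periodic pattern to be the unique pattern, hence $f$ is eventually periodic; reversing the reduction of the previous paragraph carries this conclusion back to the original $f$.

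I expect the main obstacle to be precisely this transfer step: converting the \emph{approximate} relation $\lambda^k\approx\mu^\ell$ into an \emph{exact} local symmetry of $f$ with a controlled period, and then propagating it through linear recurrence while keeping all error terms under control. This is the technically heavy core of Cobham's original argument and of every later streamlining of it. By contrast the number-theoretic ingredient is soft and is entirely captured by the density of $(\{k\alpha\})_k$, and the reduction to the substitutive, finitely-many-blocks picture is routine given Lemmas~\ref{lem:kernel}, \ref{lem:auto-basic} and~\ref{le_power_lambda}. The secondary obstacle, the reduction of the non-uniformly-recurrent case to the uniformly recurrent one, could alternatively be bypassed by the logical route: express $\lambda$- and $\mu$-automatic sets inside the structures $\langle\N,+,V_\lambda\rangle$ and $\langle\N,+,V_\mu\rangle$, and show that any set definable in both is definable in Presburger arithmetic $\langle\N,+\rangle$, hence eventually periodic.
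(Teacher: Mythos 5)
The paper does not prove this statement at all: it is quoted as a classical theorem with a citation to Cobham, so there is no internal proof to compare against, and your argument has to stand on its own as a proof of Cobham's theorem. As it stands it does not. What you have written is a correct identification of the standard strategy (near-coincidence of scales $\lambda^k\le\mu^\ell<(1+\varepsilon)\lambda^k$ from irrationality of $\log\lambda/\log\mu$, the block structure $\tau(\varphi^k(g(i)))$, linear recurrence in the uniformly recurrent case), but the two steps that constitute the actual theorem are left as acknowledged placeholders. First, the pigeonhole step is not a proof: pigeonholing on the pairs of letters $(g(i),g'(j))$ over $\lambda$-block starting points produces two positions whose distance is a \emph{multiple of} $\lambda^k$, not a $d\le\varepsilon\lambda^k$; and even when the carried letters coincide, this only forces $f$ to repeat one block of length about $\lambda^k$, not the identity $f(x+m)=f(x'+m)$ on a window of length $\gg Cd$ with small $d$ that your linear-recurrence argument needs. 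Producing a genuinely small period from the slow drift of the two grids requires controlling how offsets propagate under both substitutions simultaneously (Cobham's ``agreement'' lemmas, or Durand's return-word analysis), and none of that is supplied; you yourself flag it as ``the technically heavy core,'' which is exactly the point — without it there is no proof. Second, the reduction of the general case to the uniformly recurrent case is dismissed as bookkeeping, but in every known proof it is a substantial argument (e.g.\ Cobham's syndeticity analysis of letter occurrences), and the inductive ``peeling off'' you sketch is not formulated precisely enough to check.

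The alternative route you mention at the end does not rescue this: the statement that every set definable in both $\langle\N,+,V_\lambda\rangle$ and $\langle\N,+,V_\mu\rangle$ is Presburger-definable is precisely the Cobham--Semenov theorem, i.e.\ a restatement of what is to be proved, and its known proofs (Michaux--Villemaire, B\`es) are themselves nontrivial. Soft ingredients you use correctly — density of $\{k\log\lambda/\log\mu\}$, the kernel/substitution formalism, Lemma~\ref{le_power_lambda}, and the fact that uniformly recurrent automatic sequences are linearly recurrent — are fine, but they are the easy part. In the context of this paper the intended ``proof'' is simply the citation to \cite{Cobham1972}; if you want a self-contained argument you would need to carry out the transfer and reduction steps in full, for instance along the lines of Durand's proof.
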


%%% XXX - basics about automatic sequences

\subsection{The pumping Lemma}	  		
The pumping Lemma (for automatic sequences) is a classical tool, which is most often used to show that some particular sequence is not automatic.
We state here a slightly simplified version of it which can be immediately obtained from the Pumping Lemma for regular languages.
Therefore, we need to define for a word $w = w_0 w_1 \ldots w_n$ over the alphabet $\{0,1,\ldots,\lambda-1\}$ the corresponding integer (so that $w$ is the base $\lambda$ representation), i.e. $[w]_{\lambda} := w_0 \lambda^n + w_1 \lambda^{n-1} + \ldots + w_n \lambda^{0}$.
\begin{lemma}
	Let $f$ be a $\lambda$-automatic sequence. Then there exists $n_0 \in \N$ such that for all $n \geq n_0$ there exist words $u,v,w$ over the alphabet $\{0,1,\ldots, \lambda-1\}$ (where $v$ is non-empty) such that $[uvw]_{\lambda} = n$ and $f([uv^kw]_{\lambda}) = f(n)$ for all $k\geq 0$.\footnote{Here, $uvw$ denotes the concatenation of the words $u$, $v$ and $w$ and $v^k$ denotes the $k$-times concatenation of $v$.}
\end{lemma}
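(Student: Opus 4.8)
The plan is to derive the statement directly from the Pumping Lemma for regular languages, as the text already suggests; the content is a careful translation between sequences and languages rather than a new combinatorial idea.

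First, I would pass from $f$ to an automaton. Since $f$ is $\lambda$-automatic (with $\lambda \ge 2$), it is computed by a deterministic finite automaton with output $M = (Q, \Sigma, \delta, q_0, \tau)$ over $\Sigma = \{0, 1, \ldots, \lambda - 1\}$ which reads base-$\lambda$ representations most significant digit first and which may be taken insensitive to leading zeros, so that $\tau(\delta(q_0, w)) = f([w]_\lambda)$ for every word $w \in \Sigma^*$; this is the standard automaton description of automatic sequences (see \cite{Allouche2003}). As already noted after Lemma~\ref{lem:kernel}, $f$ takes only finitely many values — equivalently $\{\tau(q) : q \in Q\}$ is finite — so for each value $c$ in the image of $f$ I can form the language
\[
  L_c := \bigl\{\, w \in \Sigma^* \ :\ f([w]_\lambda) = c \,\bigr\} = \bigl\{\, w \in \Sigma^* \ :\ \tau(\delta(q_0, w)) = c \,\bigr\}.
\]
Each $L_c$ is recognised by the finite automaton obtained from $M$ by declaring the accepting states to be precisely those $q$ with $\tau(q) = c$, hence $L_c$ is regular.

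Next, I would apply the classical Pumping Lemma to each of the finitely many languages $L_c$, producing a pumping length $p_c$, and set $p := \max_c p_c < \infty$ and $n_0 := \lambda^p$. Given $n \geq n_0$, let $z$ be the ordinary base-$\lambda$ representation of $n$; then $[z]_\lambda = n$ and $\abs{z} > p$. With $c := f(n)$ we have $z \in L_c$ and $\abs{z} \geq p_c$, so the Pumping Lemma gives a factorisation $z = uvw$ with $u,v,w \in \Sigma^*$, $v$ non-empty, and $uv^k w \in L_c$ for all $k \geq 0$. Since $[uvw]_\lambda = [z]_\lambda = n$ and $uv^k w \in L_c$ means $f([uv^k w]_\lambda) = c = f(n)$, this is exactly the asserted conclusion.

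There is no serious obstacle here; the only points to watch are bookkeeping. The main one is that one must use the leading-zero-insensitive form of the automaton (equivalently, define $L_c$ using all words over $\Sigma$, not just canonical representations): otherwise the pumped-down word $uw$ arising for $k = 0$, which may begin with zeros, could fail to evaluate to $c$, or $L_c$ would fail to be closed under pumping. The other is the harmless choice of $n_0 = \lambda^p$ (anything $\geq \lambda^{p-1}$ works), ensuring every $n \geq n_0$ has a base-$\lambda$ representation long enough to be pumped.
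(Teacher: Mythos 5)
Your proof is correct and takes precisely the route the paper indicates: the paper offers no written proof, only the remark that the statement ``can be immediately obtained from the Pumping Lemma for regular languages,'' and your argument is exactly that derivation, with the two bookkeeping points (pumping the regular preimage languages $L_c$ of the finitely many values, and using a leading-zero-insensitive automaton so that $uv^0w$ still evaluates correctly) handled properly.
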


We can rewrite the condition $n = [uvw]_{\lambda}$ as
\begin{align*}
	n = [u]_{\lambda} \cdot \lambda^{\abs{vw}} + [v]_{\lambda} \cdot \lambda^{\abs{w}} + [w]_{\lambda}.
\end{align*}
Similarly, we find
\begin{align*}
	[uv^kw]_{\lambda} &= [u]_{\lambda} \cdot \lambda^{\abs{v^kw}} + [v]_{\lambda} \cdot \lambda^{\abs{v^{k-1}w}} + \ldots + [v]_{\lambda} \cdot \lambda^{\abs{w}} + [w]_{\lambda}\\
		&= [u]_{\lambda} \cdot \lambda^{k \abs{v} + \abs{w}} + [v]_{\lambda} \cdot \lambda^{(k-1)\abs{v}+ \abs{w}} + \ldots + [v]_{\lambda} \cdot \lambda^{\abs{w}} + [w]_{\lambda}\\
		&= [u]_{\lambda} \cdot \lambda^{k \abs{v} + \abs{w}} + [v]_{\lambda} \lambda^{\abs{w}} \frac{\lambda^{k\abs{v}} - 1}{\lambda^{\abs{v}}-1} + [w]_{\lambda}.
\end{align*}

Thus, we have the following version of the Pumping Lemma for automatic sequences, where we put $\ell_1 = \abs{w}, \ell_2 = \abs{v}, \ell_3 = \abs{u}$ and $x = [u]_{\lambda}, y = [v]_{\lambda}, z = [w]_{\lambda}$.
\begin{lemma}\label{le_pumping_automatic}
  		Let $f$ be a $\lambda$-automatic sequence. Then there exists $n_0 > 0$ such that for every integer $n \geq n_0$ there exist integers $\ell_1, \ell_3 \geq 0$, $\ell_2 \geq 1$ and $x < \lambda^{\ell_3}, y < \lambda^{\ell_2}, z<\lambda^{\ell_1}$ such that
  		\begin{align*}
  			n =  x \lambda^{\ell_{1} + \ell_{2}} + y \lambda^{\ell_{1}} + z,
  		\end{align*}
  		and
  		\begin{align*}
  			f(n) = f\rb{x \lambda^{\ell_{1} + k \cdot \ell_{2}} + y \lambda^{\ell_1}  \frac{\lambda^{k \cdot \ell_{2}}-1}{\lambda^{\ell_{2}}-1}+ z}
  		\end{align*}
  		for all $k \geq 0$.
  	\end{lemma}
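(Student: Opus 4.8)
The plan is to obtain this arithmetic statement as a direct repackaging of the word-based Pumping Lemma stated immediately above, which itself is the classical Pumping Lemma for regular languages applied to the (regular) language of base-$\lambda$ expansions of a level set $\{m : f(m) = c\}$ of the automatic sequence $f$. So the only work is to translate a statement about words into a statement about integers, and the required computation is exactly the one carried out in the displays preceding the lemma.

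Concretely, I would take $n_0$ to be the constant produced by the word-based Pumping Lemma (a pumping length for the relevant automaton), enlarged if necessary so that every $n \geq n_0$ has a sufficiently long base-$\lambda$ representation. Given $n \geq n_0$, the word-based lemma yields words $u,v,w$ over $\{0,1,\dots,\lambda-1\}$ with $v$ non-empty, $[uvw]_\lambda = n$, and $f([uv^kw]_\lambda) = f(n)$ for all $k \geq 0$. I then set $\ell_1 = \abs{w}$, $\ell_2 = \abs{v} \geq 1$, $\ell_3 = \abs{u}$ and $x = [u]_\lambda$, $y = [v]_\lambda$, $z = [w]_\lambda$. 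The bounds $x < \lambda^{\ell_3}$, $y < \lambda^{\ell_2}$, $z < \lambda^{\ell_1}$ are immediate, since any word of length $m$ over $\{0,\dots,\lambda-1\}$ represents an integer in $[0,\lambda^m)$ (this also covers the degenerate cases $u$ or $w$ empty, where the corresponding value is $0$). Expanding $[uvw]_\lambda$ by the concatenation rule gives $n = x\lambda^{\ell_1+\ell_2} + y\lambda^{\ell_1} + z$, and expanding $[uv^kw]_\lambda$ and summing the geometric series in $\lambda^{\ell_2}$ gives
\[
  [uv^kw]_\lambda = x\lambda^{\ell_1 + k\ell_2} + y\lambda^{\ell_1}\,\frac{\lambda^{k\ell_2}-1}{\lambda^{\ell_2}-1} + z ,
\]
precisely as in the computation preceding the statement; substituting into $f([uv^kw]_\lambda) = f(n)$ yields the claimed identity.

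There is no real obstacle here: the lemma is a routine reformulation. The only points needing a little care are that $v$ is non-empty (guaranteed by the regular-language Pumping Lemma, hence $\ell_2 \geq 1$), that leading zeros in $u$ are harmless (they change neither $[uvw]_\lambda$ nor the value of $f$), and that the statement tacitly assumes $\lambda \geq 2$, which is needed both for base-$\lambda$ representations to make sense and for the denominator $\lambda^{\ell_2}-1$ to be nonzero; the case $\lambda = 1$, i.e.\ $f$ eventually periodic, is either excluded or trivial.
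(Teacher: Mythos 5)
Your proposal is correct and follows exactly the paper's own route: the paper states the word-based Pumping Lemma, carries out the same expansion of $[uvw]_\lambda$ and $[uv^kw]_\lambda$ with the geometric series, and defines $\ell_1,\ell_2,\ell_3$ and $x,y,z$ precisely as you do. The minor points you flag (non-emptiness of $v$, leading zeros, $\lambda\geq 2$) are consistent with the paper's treatment, which handles $\lambda=1$ separately as eventual periodicity.
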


\section{Comments on the main theorem}
To see Theorem~\ref{th_main} in its proper context, in this section we discuss examples of multiplicative automatic sequences and point out some easy observations on decompositions of the type given by \eqref{forma}.  	
 	
Let $p$ be a prime. Recall that for $n \geq 1$, $\nu_p(n)$ denotes the $p$-adic valuation of $n$, that is, the unique integer $k$ such that $p^k \mid n$ and $p^{k+1} \nmid n$. The map $\nu_p \colon \N \to \N_0$ is completely additive, that is,
  \begin{equation}\label{prz1}
  \nu_p(mn)=\nu_p(m)+\nu_p(n)
  \end{equation}
  for all $m,n \geq 1$. It is a standard observation that if $m,n \geq 1$ and $\nu_p(n) \neq \nu_p(m)$ then
  \begin{equation}\label{prz15}
  \nu_p(m+n) = \min\left( \nu_p(m),\nu_p(n)\right).\end{equation}
  Since $\nu_p$ takes the value $0$ at least at one of any pair of coprime integers, it also follows that
  \begin{equation}\label{prz2}
  \nu_p (mn) = \max(\nu_p(m),\nu_p(n))
  \end{equation}
  for any $m,n \geq 1$ with $(m,n) = 1$.
   As a consequence, we immediately obtain a class of examples of $p$-automatic multiplicative sequences coming from $p$-adic valuations.

  \begin{lemma}\label{l:prz1}
  	Let $f_1 \colon \N_0 \to \C$ be a sequence with $f_1(0) = 1$. Then the sequence $a_1 \colon \N \to \C$ given by $a_1(n) = f_1(\nu_p(n))$ is multiplicative. If $f_1$ is eventually periodic, then $a_1$ is $p$-automatic.
  \end{lemma}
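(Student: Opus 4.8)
My plan is to handle the two claims in turn, both being short. For multiplicativity, take coprime $m, n \geq 1$: since $p$ cannot divide both, one of $\nu_p(m), \nu_p(n)$ is $0$, say $\nu_p(n) = 0$, and then \eqref{prz1} gives $\nu_p(mn) = \nu_p(m) + \nu_p(n) = \nu_p(m)$, so
\[
  a_1(mn) = f_1(\nu_p(m)) = f_1(\nu_p(m)) \cdot f_1(0) = a_1(m) a_1(n),
\]
the middle step being exactly where the normalisation $f_1(0) = 1$ enters. (One could equally invoke \eqref{prz2}, which says $\nu_p(mn) = \max(\nu_p(m), \nu_p(n))$ for coprime arguments.)

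For the automaticity, assume $f_1$ is eventually periodic, with $f_1(k + q) = f_1(k)$ for all $k \geq N_0$ and some $q \geq 1$; in particular $f_1$ has finite range. By Lemma~\ref{lem:kernel} it suffices to show that the $p$-kernel of $a_1$ is finite, i.e.\ that only finitely many distinct sequences $n \mapsto a_1(p^k n + r)$ occur as $k \geq 0$ and $0 \leq r < p^k$ vary, and I would split on whether $r = 0$. If $0 < r < p^k$, set $j = \nu_p(r) < k$; since $\nu_p(p^k n) = k + \nu_p(n) \geq k > j$ for every $n \geq 1$, relation \eqref{prz15} forces $\nu_p(p^k n + r) = j$, so $n \mapsto a_1(p^k n + r)$ is the constant sequence $f_1(j)$, and only finitely many such arise because $f_1$ has finite range. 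If $r = 0$, then $a_1(p^k n) = f_1(k + \nu_p(n)) = (g_k \circ \nu_p)(n)$ with $g_k(m) := f_1(k+m)$; eventual periodicity of $f_1$ makes $\{g_k : k \geq 0\}$ finite (for $k \geq N_0$ the shift $g_k$ depends only on $k \bmod q$, giving at most $N_0 + q$ of them), hence only finitely many sequences $g_k \circ \nu_p$. Combining the two cases shows the $p$-kernel is finite, so $a_1$ is $p$-automatic by Lemma~\ref{lem:kernel}.

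I do not anticipate a real obstacle. The only point needing a little care is the $r = 0$ case: there $\nu_p$ is unbounded, so one must use eventual periodicity of $f_1$ to see that the a priori infinitely many shifts $g_k$ collapse to finitely many sequences, whence $g_k \circ \nu_p$ ranges over a finite set. Everything else is a direct application of \eqref{prz1}, \eqref{prz15}, and the kernel criterion.
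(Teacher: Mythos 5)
Your proof is correct and follows essentially the same route as the paper's: multiplicativity via the normalization $f_1(0)=1$ together with the observation that at least one of $\nu_p(m),\nu_p(n)$ vanishes for coprime $m,n$, and automaticity by checking the $p$-kernel is finite, splitting into $r\neq 0$ (constant kernel sequences via \eqref{prz15}) and $r=0$ (finitely many shifts of $f_1$ by eventual periodicity). The only cosmetic difference is that you appeal to \eqref{prz1} where the paper cites \eqref{prz2}, but the computation is the same.
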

\begin{proof}
The first part of the statement follows directly from \eqref{prz2}. For the second part, we verify that the $p$-kernel of $a_1$ is finite. Consider any sequence $n \mapsto a_1(p^k n + r)$ in the $p$-kernel of $a_1$, where $k \geq 0$ and $0 \leq r < p^k$.

Suppose first that $r \neq 0$, and consequently it can be written in the form $r = p^{\ell} r'$ where $p \nmid r'$ and $0 \leq \ell < k$. Then it follows from \eqref{prz15} that
\[
a_1(p^k n + r) = f_1(\ell)
\]
for all $n \geq 0$. Since $f_1$ takes on finitely many values, there are only finitely many constant sequences in the $p$-kernel of $a_1$ that arise this way.

Suppose next that $r = 0$. Then
\[
a_1(p^k n + r) = f_1(\nu_p(n)+k).
\]
Since $f_1$ is eventually periodic, the number of sequences of the form $m \mapsto f_1(m + k)$ with $k \geq 0$ is finite. As a consequence, the $p$-kernel of $a_1$ is also finite, as needed.
\end{proof}

At the opposite extreme we have examples of $p$-automatic multiplicative sequences which are invariant under multiplication by $p$. Before we proceed, we need the following general observation.

\begin{lemma}\label{le_eventually_periodic}
		Let $f: \N \to \C$ be eventually periodic and multiplicative. Then either
		\begin{enumerate}
			\item $f$ is periodic or
			\item $f$ is finitely supported.
		\end{enumerate}
	\end{lemma}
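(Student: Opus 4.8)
The plan is to exploit the interaction between eventual periodicity and multiplicativity at prime powers. Suppose $f$ is eventually periodic with period $T$ beyond some threshold $N_0$, and suppose $f$ is not finitely supported; I want to conclude that $f$ is in fact periodic (with period $T$, or a divisor thereof) from the very start. The key objects to control are the values $f(p^k)$ for primes $p$ and small exponents $k$, since multiplicativity reduces everything to prime powers, and eventual periodicity pins down $f(n)$ for all large $n$.

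First I would observe that for any prime $p$ with $p \notdv T$ and any residue class $r$ coprime to $p$, the arithmetic progression $\{p^k m : m \equiv r \ (\mathrm{mod}\ T), \ (m,p)=1\}$ meets the eventually-periodic range, so $f(p^k) f(m)$ is determined by the residue of $p^k m$ mod $T$ whenever $p^k m \geq N_0$. The crucial point is that $f(p^k)$ for $k$ large can be read off in two ways: on one hand $p^k$ itself lies in the periodic range once $k$ is large, so $f(p^k)$ depends only on $p^k \bmod T$; on the other hand, since $(p,T)=1$, the powers $p^k \bmod T$ are eventually periodic in $k$ with some period $d \mid \varphi(T)$, so the sequence $k \mapsto f(p^k)$ is eventually periodic in $k$. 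Combined with multiplicativity, $f(p^k) f(p^j) = f(p^{k}) f(p^{j})$ forces multiplicative relations: writing $f(p^k) = f(p)^{?}$ is too naive since $f$ need not be completely multiplicative, but the relation $f(p^{k+d}) = f(p^k)$ for large $k$ together with $f(p^k) f(p^d)$-type comparisons (using that $p^{k+d} \equiv p^k \bmod T$) should force either $f(p^k) = 0$ for some $k$ (hence, by multiplicativity, $f$ vanishes on the whole progression $p^k \N$, pushing toward finite support after handling finitely many primes) or $|f(p^k)| = 1$ and the values stabilize immediately.

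Then I would handle the finitely many primes $p \mid T$ separately: for each such $p$, only finitely many prime powers $p^k$ are below any fixed bound, and eventual periodicity again constrains $f(p^k)$ for large $k$; the contribution of these primes to any $f(n)$ is a bounded finite product, so they cannot destroy periodicity once the coprime-to-$T$ part is understood. Finally, assuming $f$ is not finitely supported, one shows there are infinitely many primes $p$ with $f(p) \neq 0$ (otherwise $f$ is supported on a finite set of primes and their powers times bounded stuff — I'd need to check this still leaves room for infinite support via unbounded exponents, so more care is needed here, possibly ruling that out using the eventual-periodicity of $k \mapsto f(p^k)$ to show each single prime contributes only finitely many nonzero values unless $|f(p^k)|$ is eventually $1$). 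Putting the prime-power analysis together with multiplicativity and the Chinese Remainder Theorem, one verifies $f(n)$ is determined by $n \bmod T'$ for all $n \geq 1$, where $T'$ is a suitable multiple of $T$, giving genuine periodicity.

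The main obstacle I anticipate is the dichotomy at a single prime: ruling out the intermediate scenario where $k \mapsto f(p^k)$ is eventually periodic and nonzero but has not stabilized from $k = 0$ (say $f(p) = 2$ but $f(p^k) = 1$ for $k \geq 1$ — is this consistent with eventual periodicity and multiplicativity?). Resolving this requires carefully combining the two descriptions of $f(p^k)$ (via $p^k \bmod T$ and via multiplicativity against other integers in the periodic range) to show that any "pre-periodic" behavior at $p$ is incompatible with $f$ being both eventually periodic and not finitely supported — essentially, a nontrivial pre-period at some prime would have to be "seen" at infinitely many large $n$ (namely $n = p^j \cdot (\text{coprime part})$ with the coprime part varying), contradicting periodicity of $f$ on $[N_0, \infty)$ unless those $n$ all have $f(n) = 0$, which feeds back into finite support.
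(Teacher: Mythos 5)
Your plan has genuine gaps that you yourself flag but never close. You state ``more care is needed here,'' ``I'd need to check,'' and ``Resolving this requires carefully combining \ldots,'' and the final step (``one verifies $f(n)$ is determined by $n \bmod T'$'') is asserted rather than proved. The hardest point you identify --- ruling out a nontrivial pre-period in $k \mapsto f(p^k)$ at a single prime --- is exactly where the argument stalls: you observe that a pre-periodic value at $p$ ``would have to be seen at infinitely many large $n$,'' but you never exhibit the specific $n$'s nor derive the contradiction, and without that the dichotomy does not follow. Likewise the claim that $f$ not finitely supported forces infinitely many primes $p$ with $f(p) \neq 0$ is not established (nor is it actually needed). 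As written, this is a collection of correct-sounding observations about $p^k \bmod T$, not a proof.

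The prime-power decomposition is also the wrong tool here; it is what generates all the case analysis you struggle with. The paper's argument avoids primes entirely with one move. Let $h$ be a $d$-periodic sequence agreeing with $f$ on $[n_0,\infty)$. For any $n \geq 1$, pick $s$ large enough that $1+snd \geq n_0$. Then $n$ and $1+snd$ are coprime, $1+snd \equiv 1 \pmod d$, and $n(1+snd) \equiv n \pmod d$ with $n(1+snd) \geq n_0$, so
\[
h(n) = h\bigl(n(1+snd)\bigr) = f\bigl(n(1+snd)\bigr) = f(n)\,f(1+snd) = f(n)\,h(1).
\]
If $h(1)=0$ this forces $h \equiv 0$, hence $f$ finitely supported; otherwise $f(n) = h(n)/h(1)$ is $d$-periodic on all of $\N$. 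This single computation replaces your entire prime-by-prime analysis, including the troublesome pre-period question, because it directly compares $f(n)$ for \emph{arbitrary} $n$ (not just prime powers) to a value of $f$ in the periodic range, using multiplicativity only once via a coprime factor congruent to $1$ modulo the period.
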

	\begin{proof}
	Since $f$ is eventually periodic, there exists a periodic sequence $h \colon \N \to \C$ with some period $d \geq 1$ and a threshold $n_0$ such that $f(n) = h(n)$ for all $n \geq n_0$. %If the support of $f$ is finite then we are done, so suppose that this is not the case. In particular, the sequence $h$ is not identically $0$.

	We next elucidate the connection between $f$ and $h$. For any $n \geq 1$ and any $s \geq n_0/nd$, the integers $n$ and $1+snd$ are coprime, so
\begin{equation}\label{eq:508:1}
	\begin{split}
		h(n) &= h\big( n(1+snd) \big) \\&= f\big( n(1+snd) \big) = f(n)f(1+snd) = f(n)h(1).
	\end{split}
	\end{equation}
If $h(1) = 0$ then $h$ would be identically $0$, in which case $f$ is finitely supported and we are done. Otherwise, $h(1) \neq 0$ and $f(n) = h(n)/h(1)$ is periodic.
	\end{proof}

\begin{lemma}\label{l:prz2}
Let $f_2 \colon \N \to \C$ be eventually periodic and multiplicative. Then the sequence $a_2 \colon \N \to \C$ given by $a_2(n) = f_2(n/p^{\nu_p(n)})$ is multiplicative and $p$-automatic.
\end{lemma}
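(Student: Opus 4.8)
The statement has two parts: that $a_2$ is multiplicative, and that it is $p$-automatic. For multiplicativity, the plan is to use the fact established in \eqref{prz2}: if $(m,n) = 1$, then at most one of $m,n$ is divisible by $p$, so $p^{\nu_p(mn)} = p^{\nu_p(m)} \cdot p^{\nu_p(n)}$ and moreover the $p$-free parts $m/p^{\nu_p(m)}$ and $n/p^{\nu_p(n)}$ are again coprime. Hence $mn/p^{\nu_p(mn)} = (m/p^{\nu_p(m)})(n/p^{\nu_p(n)})$ is a product of two coprime integers, and multiplicativity of $f_2$ gives $a_2(mn) = f_2(mn/p^{\nu_p(mn)}) = f_2(m/p^{\nu_p(m)}) f_2(n/p^{\nu_p(n)}) = a_2(m) a_2(n)$. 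This is essentially routine, and I would present it in two or three lines.

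For $p$-automaticity, the approach is again to show that the $p$-kernel of $a_2$ is finite, mirroring the proof of Lemma~\ref{l:prz1}. Take a typical kernel element $n \mapsto a_2(p^k n + r)$ with $k \geq 0$ and $0 \leq r < p^k$. I would split into the two cases $r \neq 0$ and $r = 0$. If $r \neq 0$, write $r = p^{\ell} r'$ with $p \nmid r'$ and $0 \leq \ell < k$; by \eqref{prz15}, $\nu_p(p^k n + r) = \ell$ for all $n \geq 0$, so the $p$-free part of $p^k n + r$ equals $(p^k n + r)/p^{\ell} = p^{k-\ell} n + r'$. Thus $a_2(p^k n + r) = f_2(p^{k - \ell} n + r')$, which is the restriction of the eventually periodic sequence $f_2$ to an arithmetic progression; since any restriction of an eventually periodic sequence to an arithmetic progression is again eventually periodic, and an eventually periodic sequence takes only finitely many values and has a finite kernel, only finitely many such sequences arise. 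If $r = 0$, then $p^k n + r = p^k n$ has $p$-free part equal to $n/p^{\nu_p(n)}$, so $a_2(p^k n) = a_2(n)$, and this contributes just the single sequence $a_2$ itself. Combining, the $p$-kernel of $a_2$ is finite, so by Lemma~\ref{lem:kernel}, $a_2$ is $p$-automatic.

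The main point to get right is the bookkeeping in the $r \neq 0$ case: one must check that as $k$ and $r$ range over all admissible values, the shifted/dilated copies $n \mapsto f_2(p^{k-\ell} n + r')$ of $f_2$ produce only finitely many distinct sequences. This follows because $f_2$ is eventually periodic with some period $d$, so $f_2(p^{k-\ell} n + r')$ depends only on $\gcd$-type data modulo $d$ together with a bounded initial segment — concretely, it suffices to invoke that restricting an eventually periodic sequence to any arithmetic progression yields an eventually periodic sequence (noted in the excerpt for automatic sequences, and elementary here), and that there are only finitely many eventually periodic sequences with a fixed eventual period and bounded pre-period length taking values in the finite value set of $f_2$. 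I do not expect any genuine obstacle; the proof is a direct adaptation of Lemma~\ref{l:prz1}.
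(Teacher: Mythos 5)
Your proof is correct and follows the same overall strategy as the paper: multiplicativity via the observation that coprime $m,n$ have coprime $p$-free parts, and automaticity by exhibiting a finite $p$-kernel with the cases $r=0$ (which just reproduces $a_2$) and $r\neq 0$ (which reduces to $n\mapsto f_2(p^{k-\ell}n+r')$). Where you diverge is the counting argument in the $r\neq 0$ case. The paper invokes Lemma~\ref{le_eventually_periodic}, using the multiplicativity of $f_2$ to split into two alternatives: $f_2$ periodic with some period $d$, in which case the kernel elements are among the $d^2$ sequences $n\mapsto f_2(in+j)$ with $0\leq i,j<d$; or $f_2$ finitely supported, in which case all but finitely many kernel elements are eventually zero. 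You instead give a direct count that does not use the multiplicativity of $f_2$ at all in the automaticity part: every restriction $n\mapsto f_2(p^{k-\ell}n+r')$ is eventually periodic with a period dividing a fixed $d$ (say the eventual period of $f_2$), its pre-period is bounded by the pre-period length of $f_2$ because $p^{k-\ell}\geq p\geq 2$ compresses the initial segment, and the value set is the finite value set of $f_2$, so there are only finitely many such sequences. Both arguments are sound. The paper's route is a bit cleaner given that Lemma~\ref{le_eventually_periodic} is already at hand; yours is slightly more general, in effect proving that $n\mapsto f_2(n/p^{\nu_p(n)})$ is $p$-automatic for \emph{any} eventually periodic $f_2$, with multiplicativity used only for the first half of the statement.
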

\begin{proof}
The multiplicativity of $a_2$ follows immediately by the multiplicativity of $f_2$ and the discussion above.
Proceeding as in the proof of Lemma~\ref{l:prz1}, we will show that the kernel of $a_2$ is finite. Pick a sequence $n \mapsto a_2(p^k n + r)$ with $k \geq 0$ and $0 \leq r < p^k$. If $r = 0$ then
\[
	a_2(p^k n + r) = a_2(n)
\]
for all $n \geq 0$, which contributes in total one sequence to the $p$-kernel.

Suppose next that $r \neq 0$ and write $r = p^{\ell} r'$ with $p \nmid r'$. Hence, for all $n \geq 0$, we have
\[
	a_2(p^k n + r) = f_2(p^{k-\ell} n + r').
\]
In this situation it will be convenient to split into two cases, depending on which of the alternatives holds in Lemma~\ref{le_eventually_periodic} applied to $f_2$. If $f_2$ is periodic with a period $d$ then $n \mapsto a_2(p^k n + r)$ coincides with one of the (at most) $d^2$ sequences of the form $n \mapsto f_2(i n + j)$ with $0 \leq i,j < d$. If $f_2$ is finitely supported then for all but finitely many choices of $k$ and $r$, the sequence $n \mapsto a_2(p^k n + r)$ is identically zero on $\N$. In either case, the $p$-kernel of $a_2$ is finite.
\end{proof}

%OLD	
%Another source of multiplicative and automatic sequences is given by the following:

%\begin{lemma}\label{l:prz2}
%Assume that $f_2$ is eventually periodic and multiplicative. Then $n\mapsto f_2(n/p^{\nu_p(n)})$ is multiplicative and $p$-automatic.\end{lemma}
%\begin{proof}Using Lemma~\ref{le_eventually_periodic}, first consider the case $f_2$ is periodic. Proceeding as in the proof of Lemma~\ref{l:prz1}, consider $r=0$. Then
%$$
%m\mapsto f_2(mp^k/p^{\nu_p(mp^k)})=f_2(m/p^{\nu_p(m)}),$$
%so we obtain a sequence which does not depend on $k$.
%
%Now, for $i\geq1$, consider $r=r_{k-1}p^{k-1}+\ldots+r_ip^i$  ($r_{i-1}=\ldots=r_0=0$). Then
%$$
%m\mapsto f_2((mp^k+r)/p^{\nu_p(mp^k+r)})=f_2(mp^{k-i}+r'), $$
%where $r'=r/p^i$, so $p$ does not divide $r'$. It is enough to show that the number of sequences
%\begin{equation}\label{liczba}
%m\mapsto f_2(mp^k+r)\end{equation}
%with $k\geq1$ and $(r,p)=1$ is finite. But if $a\geq1$ is a period of $f_2$ then
%$$
%f_2((m+a)p^k+r)=f_2(mp^k+r),$$
%so each such sequence is $a$-periodic. Since the set of values of $f_2$ is finite, we can obtain only a finite number of such sequences.
%
%Assume now that $f_2$ has a finite support. Then, we basically repeat all previous argument to conclude (the number of sequences~\eqref{liczba} is finite since when $k$ is large enough, we obtain the all zero sequence).
%\end{proof}

Combining Lemmas \ref{l:prz1} and \ref{l:prz2} (and the basic observations that $p$-automatic sequences are closed under products), we have just proved the following:

\begin{lemma}\label{l:prz3} Let $f_1 \colon \N_0 \to \C$, $f_2 \colon \N \to \C$ be eventually periodic sequences and assume further that $f_1(0) = 1$ and $f_2$ is multiplicative. Then the sequence
$$
n\mapsto f_1(\nu_p(n))f_2(n/p^{\nu_p(n)})
$$
is multiplicative and $p$-automatic.
\end{lemma}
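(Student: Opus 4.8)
The plan is to deduce Lemma~\ref{l:prz3} immediately by combining Lemmas~\ref{l:prz1} and~\ref{l:prz2} with the closure properties of automatic sequences recorded in Lemma~\ref{lem:auto-basic}. Write $a_1(n) = f_1(\nu_p(n))$ and $a_2(n) = f_2(n/p^{\nu_p(n)})$, so that the sequence under consideration is the pointwise product $a_1 \cdot a_2$.

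First I would invoke Lemma~\ref{l:prz1}: since $f_1$ is eventually periodic with $f_1(0) = 1$, the sequence $a_1$ is both multiplicative and $p$-automatic. Likewise, since $f_2$ is eventually periodic and multiplicative, Lemma~\ref{l:prz2} gives that $a_2$ is multiplicative and $p$-automatic. It then remains to check that the pointwise product of these two sequences inherits both properties.

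For multiplicativity, if $(m,n) = 1$ then $(a_1 a_2)(mn) = a_1(mn) a_2(mn) = a_1(m) a_1(n) a_2(m) a_2(n) = (a_1 a_2)(m) \cdot (a_1 a_2)(n)$, using the multiplicativity of $a_1$ and $a_2$. For $p$-automaticity, I would apply Lemma~\ref{lem:auto-basic} in two steps: part~(2) shows that $n \mapsto (a_1(n), a_2(n))$ is $p$-automatic, and then part~(1), applied with $\pi \colon \C^2 \to \C$ the multiplication map, shows that $n \mapsto a_1(n) a_2(n)$ is $p$-automatic. This completes the proof.

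There is essentially no obstacle here: the statement is a formal corollary of the two preceding lemmas together with the closure of the class of automatic sequences under entry-wise operations. The only point requiring a little care is that the closure under products is not stated as a single lemma but obtained in the two steps above (first pair up, then push forward under multiplication), so both parts of Lemma~\ref{lem:auto-basic} must be cited.
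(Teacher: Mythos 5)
Your proof is correct and matches the paper's approach exactly: the paper also obtains Lemma~\ref{l:prz3} by combining Lemmas~\ref{l:prz1} and~\ref{l:prz2} with the closure of $p$-automatic sequences under pointwise products. You have merely spelled out the closure step explicitly via the two parts of Lemma~\ref{lem:auto-basic}, which the paper leaves implicit.
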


Further remarks:
\begin{itemize}[wide]
\item The sequence $n\mapsto f_1(\nu_p(n))$ is Toeplitz\footnote{A sequence $\N_0 \ni n \mapsto f(n)$ is Toeplitz if for each $n$ there exists $M \geq 1$ such that $f(n+sM) = f(n)$ for all $s \geq 0$.}, and hence almost periodic\footnote{A sequence is called \emph{almost periodic}, if every finite word that appears in the sequence, appears with bounded gaps.}. Indeed, $\nu_p(n)=\nu_p(n+spn)$ for each $s\geq 0$ as
$\nu_p(n+spn)=\nu_p(n(sp+1))=\nu_p(n)+\nu_p(sp+1)=\nu_p(n)$. Hence, each position $n$ has a period (namely $pn$).% which depends on the position.
\item If $f_2$ is periodic (with some period $d$) then the sequence $n\mapsto f_2(n/p^{\nu_p(n)})$ is Toeplitz, and hence almost periodic. Indeed, assume that $n=p^\alpha(pm+j)$ with  $0<j<p$, i.e.\ $\nu_p(n)=\alpha$. We have
    $$
    f_2((n+sp^{\alpha+1}d)/p^{\nu_p(n+sp^{\alpha+1}d)})=
    f_2(p^\alpha(pm+psd+j)/p^{\nu_p(p^\alpha(pm+psd+j))})=$$
    $$
    f_2(pm+psd+j)=f_2(pm+j)=f_2(n/p^{\nu_p(n)}).$$
    \item This shows in particular that $n\mapsto f_1(\nu_p(n))f_2(n/p^{\nu_p(n)})$ is a primitive $p$-automatic sequence by~\cite[Theorem 5]{Cobham1972} whenever $f_1$ is eventually periodic and $f_2$ is periodic.
    \item If $f_2(1)=1$ and $f_2(n) = 0$ otherwise\footnote{We consider the case of $f_2$ with finite support.} then
    $f_2(n/2^{\nu_2(n)})$ is 1 if $n=2^k$ and it is~0 otherwise. The automatic sequence we obtain is not, in general, primitive.
    For instance, if $f_1(n)=1$ for all even $n\geq0$ and $f_1(n)= 0$ otherwise, then
    $$
    n\mapsto f_1(\nu_2(n))f_2(n/2^{\nu_2(n)})$$
    takes value~1 at $2^m$ with $m\geq0$ even, and the value 0 otherwise. The corresponding automatic sequence is not almost periodic, and hence not primitive by \cite[Theorem 5]{Cobham1972}.
\end{itemize}

The main result of the paper, i.e.\ Theorem~\ref{th_main}, says that the examples given by Lemma \ref{l:prz3}  above exhaust all possible automatic sequences which are multiplicative.

As a partial converse to Lemma \ref{l:prz3}, any, not necessarily automatic, multiplicative sequence $(a(n))_{n \geq 1}$ admits a decomposition
\begin{equation}\label{eq:form-2}
	a(n) = f_1(\nu_p(n)) f_2(n/p^{\nu_p(n)}),
\end{equation} 	
where $f_1$ and $f_2$ are necessarily given by $f_1(k) = a(p^k)$ for all $k \geq 0$ and $f_2(m) = a(m)$ for all $m \geq 1$ with $p \nmid m$. For concreteness we assume that $f_2(m) = 0$ for all $m \geq 0$ with $p \mid m$, which in particular ensures that $f_2$ is multiplicative.

 Below, we record two facts which imply that if $a$ is additionally $p$-automatic then $f_1$ is eventually periodic and $f_2$ is $p$-automatic.
Hence, the content of Theorem \ref{th_main} is that automatic multiplicative sequences are always automatic with respect to bases that are prime and any $p$-automatic multiplicative sequence $a$ with $a(pn) = a(n)$ for all $n \geq 1$ is eventually periodic.

\begin{lemma}[Corollary 5.5.3 \cite{Allouche2003}]\label{podciag} If $f$ is a $\lambda$-automatic sequence for some $\lambda \in \N$ then the sequence $k \mapsto f(\lambda^k)$ is eventually periodic.
\end{lemma}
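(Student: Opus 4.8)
The plan is to exhibit the sequence $k \mapsto f(\lambda^k)$ as the orbit of a single point under a self-map of a \emph{finite} set, and then invoke the pigeonhole principle. We may assume $\lambda \geq 2$, since for $\lambda = 1$ we have $\lambda^k = 1$ for every $k$ and the sequence is constant, hence trivially eventually periodic.

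First I would introduce the ``multiplication by $\lambda$'' operator $T$ acting on complex-valued sequences indexed by $\N_0$ by $(Tg)(n) = g(\lambda n)$. The key point is that $T$ maps the $\lambda$-kernel $\mathcal{K}$ of $f$ into itself: if $g(n) = f(\lambda^j n + r)$ with $0 \leq r < \lambda^j$, then $(Tg)(n) = f(\lambda^{j+1} n + r)$ and $r < \lambda^j \leq \lambda^{j+1}$, so $Tg$ again lies in $\mathcal{K}$. By Lemma~\ref{lem:kernel}, $\mathcal{K}$ is finite, so $T \colon \mathcal{K} \to \mathcal{K}$ is a self-map of a finite set. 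A one-line induction gives $(T^k g)(n) = g(\lambda^k n)$; applying this to the starting sequence $g = f \in \mathcal{K}$ (which corresponds to $j = r = 0$) and evaluating at $n = 1$ yields $(T^k f)(1) = f(\lambda^k)$ for all $k \geq 0$.

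Next I would apply pigeonhole to the orbit $f, Tf, T^2 f, \dots$ inside the finite set $\mathcal{K}$: there exist $m \geq 0$ and $p \geq 1$ with $T^{m+p} f = T^m f$. Applying $T^{k-m}$ to this identity for an arbitrary $k \geq m$ gives $T^{k+p} f = T^k f$ as sequences, and evaluating both sides at $n = 1$ yields $f(\lambda^{k+p}) = f(\lambda^k)$ for all $k \geq m$. Hence $k \mapsto f(\lambda^k)$ is eventually periodic, with preperiod at most $m$ and period $p$.

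There is no real obstacle here; the argument is a short pigeonhole computation. The only points needing (minor) care are verifying that $T$ genuinely preserves the kernel, so that the whole orbit stays inside one fixed finite set, and the step that propagates the single equality $T^{m+p} f = T^m f$ to all $k \geq m$ by applying powers of $T$ — this is exactly what upgrades the isolated identity $f(\lambda^{m+p}) = f(\lambda^m)$ to genuine eventual periodicity of the sequence. One could equally well phrase this on the DFAO computing $f$: the base-$\lambda$ representation of $\lambda^k$ is the word $1\,0^k$, so the state reached after reading it is the $k$-th iterate of the (single-valued) $0$-transition applied to the state reached after reading $1$; the orbit of a point under a map on a finite state set is eventually periodic, and applying the output function gives the claim.
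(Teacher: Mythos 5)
Your proof is correct and is essentially the same as the paper's: both apply the pigeonhole principle to the finitely many kernel sequences $n \mapsto f(\lambda^j n)$ to get $f(\lambda^k n) = f(\lambda^\ell n)$ for some $k > \ell$ and all $n$, then specialize $n$ to a power of $\lambda$ to propagate this to eventual periodicity. Your ``operator $T$'' is just a tidier packaging of that same step.
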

\begin{proof}
	Since the $\lambda$-kernel of $f$ is finite, there exist integers $k > \ell \geq 0$ such that $f(\lambda^k n) = f(\lambda^\ell n)$ for all $n \geq 1$. In particular, $f(\lambda^{m}) = f(\lambda^{m+k-l})$ for all $m \geq l$.
\end{proof}

\begin{lemma}\label{lem:auto-power-removal}
	If $f$ is a $p$-automatic sequence, where $p$ is a prime, then the sequence $\tilde{f}$ given by $\tilde{f}(n) = f\left( n/p^{\nu_p(n)} \right)$ is also $p$-automatic.
\end{lemma}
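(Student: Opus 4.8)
The plan is to verify that the $p$-kernel of $\tilde f$ is finite, using Lemma~\ref{lem:kernel}, and to reduce everything to the finiteness of the $p$-kernel of $f$ together with Lemma~\ref{l:prz1} (applied to a suitable indicator-type sequence). First I would fix $k \geq 0$ and $0 \leq r < p^k$ and analyse the subsequence $n \mapsto \tilde f(p^k n + r)$. The key case distinction is on the $p$-adic valuation of $r$.

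\textbf{The case $r \neq 0$.} Write $r = p^{\ell} r'$ with $p \nmid r'$ and $0 \leq \ell < k$. By \eqref{prz15}, for every $n \geq 0$ we have $\nu_p(p^k n + r) = \ell$, so $p^k n + r = p^{\ell}(p^{k-\ell} n + r')$ and hence
\[
	\tilde f(p^k n + r) = f\big( p^{k-\ell} n + r' \big).
\]
Since $f$ is $p$-automatic, the sequence $n \mapsto f(p^{k-\ell} n + r')$ lies in the (finite) $p$-kernel of $f$; thus all subsequences of $\tilde f$ arising from $r \neq 0$ already belong to the $p$-kernel of $f$, of which there are only finitely many.

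\textbf{The case $r = 0$.} Here $\tilde f(p^k n) = f\big( p^k n / p^{\nu_p(p^k n)} \big) = f\big( n / p^{\nu_p(n)} \big) = \tilde f(n)$ for all $n \geq 1$ (and the single value at $n=0$ is harmless), so this contributes just one sequence, namely $\tilde f$ itself. Collecting both cases, the $p$-kernel of $\tilde f$ is contained in $\{\tilde f\} \cup \mathcal K_p(f)$, which is finite, so $\tilde f$ is $p$-automatic by Lemma~\ref{lem:kernel}. I do not expect a serious obstacle here; the only point requiring a little care is the treatment of the index $n = 0$ (and, if one insists on sequences indexed from $n \geq 0$, the value $\nu_p(0)$), which is a single entry and can be fixed by an arbitrary convention without affecting automaticity, or sidestepped by noting that modifying finitely many terms of a sequence preserves the finiteness of the $p$-kernel. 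An alternative, essentially equivalent route would be to observe that $\tilde f(n) = f\big(n/p^{\nu_p(n)}\big)$ is a Cauchy-product / entrywise combination built from the $p$-automatic sequences $n \mapsto \nu_p(n) \bmod$ (something) and shifts of $f$, and invoke Lemma~\ref{lem:auto-basic}; but the direct kernel computation above is the cleanest.
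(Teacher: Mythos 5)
Your proof is correct and follows essentially the same route as the paper's: reduce to finiteness of the $p$-kernel via Lemma~\ref{lem:kernel}, split on whether $r=0$ or $r \neq 0$, observe that $r=0$ reproduces $\tilde f$ itself while $r=p^\ell r'$ with $p\nmid r'$ reduces to the kernel element $n\mapsto f(p^{k-\ell}n+r')$. The passing reference to Lemma~\ref{l:prz1} in your opening plan is never actually used and could be dropped, but the argument as executed matches the paper.
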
	
\begin{proof}
	By Lemma \ref{lem:kernel}, it will suffice to verify that the $p$-kernel of $\tilde{f}$ is finite; in fact, we show that it has at most one element more than the kernel of $f$.
	
	Pick any $k \geq 0$ and $0 \leq r < p^k$. If $r = 0$ then, for all $n \in \N$, we have
	\[ \tilde{f}( p^k n + r) = \tilde f(n).\]
	Moreover, if $r \neq 0$, then we can decompose $r = p^{\ell} r'$, where $p \nmid r'$ which, for any $n \geq 0$, implies that
	\[
		\tilde{f}(p^k n + r) = f(p^{k-\ell} n + r').
	\]
	It remains to notice that the sequence $(f(p^{k-\ell} n + r'))_{n \geq 0}$ belongs to the kernel of $f$.
\end{proof}
\begin{remark}
	The statement above is immediate when automatic sequences are viewed through the lens of automata. Indeed, it is enough to modify a single transition in an automaton which accepts $f$ (reading input from the least significant digit) to obtain an automaton which accepts $\tilde f$.
\end{remark}

As already mentioned, the decomposition given by \eqref{eq:form-2} is essentially unique. We record this and previous observations in the following proposition.

\begin{proposition}\label{lem:unique}
	Let $p$ be a prime and let $a$ be a $p$-automatic multiplicative sequence that is not identically zero. Then there exist unique sequences $f_1$ and $f_2$ such that \eqref{eq:form-2} holds, $f_1$ is eventually periodic, $f_2$ is automatic and multiplicative, $f_1(0) = 1$ and $f_2(n) = 0$ for all $n$ with $p \mid n$.
\end{proposition}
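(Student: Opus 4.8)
The plan is to split the statement into an existence part and a uniqueness part, and to assemble existence from the lemmas already established. For existence, I would start from the observation recorded around \eqref{eq:form-2}: since $a$ is multiplicative, setting $f_1(k) = a(p^k)$ for $k \geq 0$ and $f_2(m) = a(m)$ for $p \nmid m$, $f_2(m) = 0$ for $p \mid m$, gives a valid decomposition $a(n) = f_1(\nu_p(n)) f_2(n/p^{\nu_p(n)})$ with $f_1(0) = a(1) = 1$ (note $a(1) = 1$ since $a$ is not identically zero and multiplicative), and $f_2$ is multiplicative by construction. It remains to check the regularity of these two factors. That $f_1$ is eventually periodic is exactly Lemma \ref{podciag} applied with $\lambda = p$, since $f_1(k) = a(p^k)$. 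That $f_2$ is $p$-automatic: observe that $f_2(m) = \tilde a(m) \cdot \mathbf 1_{p \nmid m}$, where $\tilde a(n) = a(n/p^{\nu_p(n)})$ is $p$-automatic by Lemma \ref{lem:auto-power-removal}, and $n \mapsto \mathbf 1_{p \nmid n}$ is periodic hence $p$-automatic; the product is $p$-automatic by Lemma \ref{lem:auto-basic}. This settles existence.

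For uniqueness, suppose $f_1, f_2$ and $g_1, g_2$ both satisfy \eqref{eq:form-2} together with all the stated side conditions. Evaluating \eqref{eq:form-2} at $n = p^k$ for each $k \geq 0$ and using $f_1(0) = g_1(0) = 1$ together with $f_2(1) = g_2(1) = a(1) = 1$ (the last because $f_2$ is multiplicative and not identically zero, its value at $1$ must be $1$), we get $f_1(k) = a(p^k) = g_1(k)$, so $f_1 = g_1$. Next, evaluating \eqref{eq:form-2} at any $n = m$ with $p \nmid m$ gives $f_1(0) f_2(m) = g_1(0) g_2(m)$, i.e.\ $f_2(m) = g_2(m)$; and for $p \mid m$ the side condition forces $f_2(m) = 0 = g_2(m)$. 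Hence $f_2 = g_2$, and uniqueness follows.

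The main subtlety — really the only place where the hypothesis "$a$ not identically zero" is used — is in pinning down the normalisations $f_1(0) = 1$ and $f_2(1) = 1$, which are what make the decomposition rigid. Without the vanishing condition on $f_2$ at multiples of $p$ the decomposition is genuinely non-unique (one can shift a periodic factor $\mathbf 1_{p \nmid n}$ between $f_1$ and $f_2$), as the Remark after Theorem \ref{th_main} explains; the side conditions in the Proposition are precisely calibrated to remove this freedom. I do not anticipate any analytic obstacle here: every ingredient is either a direct citation (Lemmas \ref{podciag}, \ref{lem:auto-power-removal}, \ref{lem:auto-basic}) or an elementary identity for the $p$-adic valuation, so the proof is a short assembly argument.
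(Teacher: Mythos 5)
Your proof is correct and follows essentially the same route as the paper: existence is assembled from Lemmas \ref{podciag} and \ref{lem:auto-power-removal} (as the paper does implicitly by referring to the preceding discussion), and uniqueness is a direct evaluation argument. The only cosmetic difference is that you establish $f_1 = g_1$ first (via $n = p^k$ and $f_2(1)=1$) and then $f_2 = g_2$, while the paper does it in the opposite order, but the two are interchangeable and rest on the same normalization $f_2(1)=1$.
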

\begin{proof}
The existence of such a decomposition has already been proved. For uniqueness, suppose that $a(n) = f_1(\nu_p(n)) \cdot f_2(n/p^{\nu_p(n)})= g_1(\nu_p(n)) \cdot g_2(n/p^{\nu_p(n)})$ were two distinct decompositions as described above. For any integer $n$ with $p \nmid n$, we have
\[
f_2(n) = f_1(0)f_2(n) = a(n) = g_1(0)g_2(n) = g_2(n).
\]
If $p \mid n$ then $f_2(n) = g_2(n) = 0$. Hence, $f_2 = g_2$. Since $a$ is not identically zero, there exists $n_0$ such that $p\nmid n_0$ and $f_2(n_0) \neq 0$. Taking $n = p^k n_0$ for an arbitrary $k \geq 0$, we conclude that
\[
	f_1(k) f_2(n_0) = a(p^k n_0) = g_1(k)f_2(n_0).
\]
It follows that $f_1 = g_1$.

%Then by taking $n=p^k$ we find $f_1(k)f_2(1)=g_1(k)g_2(1)$, so $f_1=g_1$. If $(m,p)=1$ then $f_1(0)f_2(m)=g_1(0)g_2(m)$, so $f_2(m)=g_2(m)$ for all $m$ coprime with $p$. This finishes the proof as $f_2, g_2$ vanish on the multiples of $p$.

\end{proof}

	\begin{remark}
		By Lemma~\ref{le_eventually_periodic}, we know that $f_2$ in \eqref{eq:form-2} is either finitely supported, which corresponds to the ``sparse case'' or periodic, which corresponds to the ``dense case'', see a discussion in Section \ref{ssec:BBC}.
	\end{remark}

  \section{Auxiliary lemmata}
  	
  	\subsection{Dirichlet characters}\label{subsec_dirichlet}
  	In this subsection we recall some basic properties of Dirichlet characters.
  	\begin{definition}
  		We call a function $\chi : \Z \to \C$ a \emph{Dirichlet character of modulus $k$} if:
  		\begin{itemize}
  			\item $\chi(n) =  \chi(n+k)$ for all $n \in \Z$,
  			\item $\chi(n) = 0$ if and only if $(n,k) > 1$,
  			\item $\chi(m n) = \chi(m) \chi(n)$ for all $m, n \in \Z$.
  		\end{itemize}
  	\end{definition}
  Of course, a Dirichlet character of modulus $k$ is determined by a character of the multiplicative group $(\Z/k\Z)^\ast$ (which takes values in roots of unity of degree $\phi(k)$, where $\phi$ denotes the Euler totient function).
  The Dirichlet character determined by the trivial (constant equal to~1) character of $(\Z/k\Z)^\ast$ is called the {\em principal character} and is denoted by $\widetilde{\chi}$. Note that if $d|k$ and $\eta$ is a Dirichlet character of modulus $d$ then
$$
\chi:=\eta\cdot \widetilde{\chi}$$
is a Dirichlet character of modulus $k$. In this case, we say that $\chi$ is a character {\em induced} from $\eta$.
  	
  	Let $\chi_1, \chi_2$ be Dirichlet characters of modulus $k_1, k_2$ respectively, where $(k_1, k_2) = 1$.
  	Then one sees directly that $\chi_1 \cdot \chi_2$ is a Dirichlet character of modulus $k_1 \cdot k_2$ as the product of (completely) multiplicative functions is again (completely) multiplicative.
  	
  	Moreover, if we additionally assume that $(k_1,k_2)=1$ then this reasoning can be reversed. One can also decompose a Dirichlet character $\chi$ of modulus $k = k_1 k_2$ as the product of Dirichlet characters of modulus $k_1$ and $k_2$. Indeed, we define
  	\begin{align*}
  		\chi_{k_i}(n) := \chi(n_{i}),
  	\end{align*}
  	where $n_{i}$ is uniquely defined modulo $k_1 k_2$ by
  	\begin{align*}
  		n_{i} &\equiv n \bmod k_i\\
  		n_{i} &\equiv 1 \bmod k/k_i.
  	\end{align*}
  	One can easily see\footnote{By checking that $(n+k_i)_{i} \equiv n_{i} \bmod{k_1k_2}$ and $(mn)_{i} \equiv m_{i}n_{i} \bmod{k_1 k_2}$.} that $\chi_{k_i}$ is a Dirichlet character of modulus $k_i$ and $\chi(n) = \chi_{k_1}(n) \cdot \chi_{k_2}(n)$ for all $n\in\Z$.
  	%\footnote{JK: Do we want to define $n_k$ as a function of $n$ and $k$? If yes, then we run into trouble --- $n_{k_1}$ depends on $n$, $k_1$ and also $k_2$. Maybe let's just call them $n_1$ and $n_2$? CM: Agreed.}

  	\subsection{Solution of Bell--Bruin--Coons conjecture}\label{ssec:BBC}
  	 \begin{conjecture}[Bell-Bruin-Coons]
		For any multiplicative automatic function $f:\N\to \C$ there exists an eventually periodic function $g:\N\to\C$ such that $f(p) =g(p)$ for all primes $p$.
	\end{conjecture}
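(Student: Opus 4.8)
The plan is to prove the Bell--Bruin--Coons conjecture by using the main classification-adjacent tools already assembled, but actually the honest route here is simply to \emph{cite} the existing solutions: the conjecture was proved independently by Klurman--Kurlberg and by the first-named author in \cite{Konieczny2019}, and since the excerpt explicitly says the paper will ``use the main result of \cite{Konieczny2019}'', the natural thing to write is that the statement follows from \cite[Theorem~...]{Konieczny2019} (in fact in a stronger form, where $g$ can be taken periodic rather than merely eventually periodic). So the first and main step of the ``proof'' is to quote the precise form of the Konieczny/Klurman--Kurlberg result and observe it is literally the conjecture. If instead one wants a self-contained sketch, I would proceed as follows.

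\textbf{Reduction via $p$-automaticity.} First I would fix a multiplicative automatic $f$ and note that, by Lemma~\ref{le_power_lambda} together with Cobham's Theorem~\ref{th_cobham}, $f$ is $\lambda$-automatic for some $\lambda$ which we may assume to be a prime power, say $\lambda = p^e$, and hence $f$ is $p$-automatic; if $f$ is eventually periodic there is nothing to prove, so assume it is not. By Proposition~\ref{lem:unique} we may write $f(n) = f_1(\nu_p(n)) f_2(n/p^{\nu_p(n)})$ with $f_1$ eventually periodic, $f_1(0)=1$, and $f_2$ $p$-automatic and multiplicative with $f_2$ vanishing on multiples of $p$. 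On primes $q \neq p$ we have $f(q) = f_2(q)$ and $f(p) = f_1(1)$, so it suffices to find an eventually periodic $g$ with $g(q) = f_2(q)$ for all primes $q \neq p$ (the single value at $p$ is then adjusted freely since one value can always be absorbed by an eventually periodic function).

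\textbf{Controlling $f_2$ on primes via the pumping lemma.} The crux is to show that $f_2$, which is a $p$-automatic, multiplicative, $\{0\}\cup(\text{roots of unity})$-valued sequence, is eventually periodic along the primes. Here I would invoke the Pumping Lemma, Lemma~\ref{le_pumping_automatic}: for a prime $q \geq n_0$ write $q = x p^{\ell_1 + \ell_2} + y p^{\ell_1} + z$ with $f_2(q) = f_2\!\bigl(x p^{\ell_1 + k\ell_2} + y p^{\ell_1}\frac{p^{k\ell_2}-1}{p^{\ell_2}-1} + z\bigr)$ for all $k$. For suitably chosen $k$ one arranges that this pumped integer factors in a useful way (e.g.\ is divisible by a prime one already controls, or equals $q'\cdot m$ for a small $m$), and then multiplicativity of $f_2$ forces a relation between $f_2(q)$ and values of $f_2$ at smaller, already-understood arguments; combined with Dirichlet's theorem on primes in progressions and the finiteness of the value set, this pins down $f_2(q)$ modulo a fixed period. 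This is precisely the argument carried out in \cite{Konieczny2019, Klurman2019a}, and the main obstacle is exactly this step: making the pumping-lemma integers split multiplicatively in a way compatible with the constraint that $f_2$ takes finitely many values, which requires a careful case analysis (it is the reason the full proof occupies a separate paper). Once $f_2$ is known to be eventually periodic on primes, defining $g$ to agree with $f_2$ on primes $\neq p$ and with $f_1(1)$ at $p$ gives the desired eventually periodic function, completing the proof.
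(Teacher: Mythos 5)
Your primary answer---cite the independent resolutions by Klurman--Kurlberg and by Konieczny \cite{Konieczny2019}---is exactly the paper's treatment: the conjecture is stated as already resolved, Theorem~\ref{th_konieczny} quotes it verbatim from \cite{Konieczny2019} (in the stronger ``Dirichlet character or identically zero'' form), and no new proof is attempted in this paper. In that sense your proposal is correct and identical in approach.

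The optional self-contained sketch you append, however, has a genuine gap at its very first step. Lemma~\ref{le_power_lambda} (which only passes between $\lambda$ and $\lambda^k$) combined with Cobham's Theorem~\ref{th_cobham} (which only says that automaticity in two multiplicatively independent bases forces eventual periodicity) does \emph{not} allow you to assume $\lambda$ is a prime power. A non--eventually-periodic automatic sequence may a priori be automatic only in some composite base; the assertion that every non--eventually-periodic multiplicative automatic sequence is $p$-automatic for a prime $p$ is in fact one of the substantive conclusions of Theorem~\ref{th_main}, and its proof (the composite-base case in Section~5) crucially invokes Corollary~\ref{c:KonMul}, which is a repackaging of the very Bell--Bruin--Coons result you would be trying to establish. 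Using that reduction as the opening move of a proof of BBC would therefore be circular. You do correctly identify that the real content lies in the pumping-lemma argument controlling $f_2$ on primes, and you rightly flag that filling in those details is precisely what occupies \cite{Konieczny2019} and the Klurman--Kurlberg paper; but the sketch as written cannot be turned into a proof without first finding an independent route around the base-reduction issue.
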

%(Equivalently, there exists a periodic $g:\N\to\C$ such that $g(p)=f(p)$ for all primes sufficiently large.) -- JK: slightly redundant?
This conjecture was recently solved independently by the first author~\cite{Konieczny2019} and Klurman and Kurlberg~\cite{Klurman2019}. We recall the slightly stronger form obtained in \cite{Konieczny2019}.
  	\begin{theorem}[Theorem 1 of~\cite{Konieczny2019}]\label{th_konieczny}
		If $a:\N \to \C$ is an automatic multiplicative sequence then there exist a threshold $p_*$ and a function $\chi: \N \to \C$ which is either a Dirichlet character or identically zero such that $a(n) = \chi(n)$ for all $n \in \N$ not divisible by any prime $p < p_*$.
	\end{theorem}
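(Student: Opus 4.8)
This is a strengthened form of the Bell--Bruin--Coons conjecture, quoted here from \cite{Konieczny2019}; I sketch the strategy I would follow. The idea is to play the combinatorial rigidity of automatic sequences (finitely many values, a finite $p$-kernel) against the multiplicative relation $a(mn)=a(m)a(n)$, aided by analytic information on the values $a(q)$ over primes. As a first reduction, since $a$ is $\lambda$-automatic for some $\lambda$ and takes finitely many values, Cobham's theorem (Theorem~\ref{th_cobham}) together with Lemma~\ref{le_power_lambda} shows that $a$ is either eventually periodic or automatic in a single fixed base, and an argument using multiplicativity (removing the factors of a composite base one prime at a time, via the kernel criterion) reduces the latter to a prime base $p$. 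If $a$ is eventually periodic, then by Lemma~\ref{le_eventually_periodic} it is periodic or finitely supported, and in either case it agrees with a Dirichlet character (resp.\ with $0$) off the finitely many primes dividing its period -- for periodic multiplicative $a$, Dirichlet's theorem on primes in progressions makes the restriction of $a$ to $(\Z/d\Z)^{\ast}$ a character of that group. So assume henceforth that $a$ is $p$-automatic and not eventually periodic. Observe also that, $a$ being multiplicative and finitely valued, for any value $v$ with $|v|\notin\{0,1\}$ only finitely many primes can satisfy $a(q)=v$ (otherwise $|a|$ would take the infinitely many distinct values $|v|^{j}$ on squarefree products of $j$ such primes); hence $a(q)\in\{0\}\cup\U$ for all but finitely many primes $q$, where $\U$ is a fixed finite set of roots of unity, and likewise for $q\mapsto a(q^{k})$ for each fixed $k$.

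The crucial fact to establish is that $a(q)$ eventually depends only on $q\bmod d$ for some fixed modulus $d$ -- this is exactly the Bell--Bruin--Coons conjecture in its weaker form. Granting it, write $a(q)=\psi(q\bmod d)$ for all large primes $q$. Every class of $(\Z/d\Z)^{\ast}$ contains infinitely many primes, so substituting two distinct large primes $q\equiv x$ and $q'\equiv y\pmod d$ into $a(qq')=a(q)a(q')$ shows that $\psi$ is multiplicative on $(\Z/d\Z)^{\ast}$, hence a character; let $\chi$ be the induced Dirichlet character of modulus $d$, so that $a(q)=\chi(q)$ for all large primes $q$. An entirely analogous analysis of $q\mapsto a(q^{k})$, combined with multiplicativity, then forces $a(q^{k})=\chi(q)^{k}=\chi(q^{k})$ for all large $q$; since every sufficiently rough integer is a product of prime powers $q^{k}$ with $q$ large, we conclude that $a(n)=\chi(n)$ on all sufficiently rough $n$. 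If it happens that $a$ vanishes at all large primes, the same construction yields the zero function in place of $\chi$, which is the other alternative in the statement.

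To establish the crucial fact, I would argue analytically. Automatic sequences are Ces\`aro summable along arithmetic progressions, and $a\overline{\chi_{0}}$ is automatic for every Dirichlet character $\chi_{0}$ (Lemma~\ref{lem:auto-basic}), so all the means $\tfrac1N\sum_{n\le N}a(n)\overline{\chi_{0}(n)}$ converge. By the Hal\'asz--Wirsing mean-value theory for bounded multiplicative functions, either $a$ ``pretends to be'' $\chi_{0}(n)n^{it}$ for some $\chi_{0}$ and $t\in\R$ -- and then the mere convergence of those means (a nonzero $t$ would make them oscillate) forces $t=0$, so $\mathbb{D}(a,\chi_{0})<\infty$, which, since $a(q)$ and $\chi_{0}(q)$ lie in a fixed finite set, yields $\sum_{q:\,a(q)\neq\chi_{0}(q)}1/q<\infty$ -- or else all of these twisted means vanish, from which, using that automatic sequences have well-defined densities, one deduces $a(q)=0$ for all but finitely many primes $q$. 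The one genuinely hard step is the passage from ``the exceptional set of primes has convergent reciprocal sum'' to ``the exceptional set is finite''. This is where one must use that $\{n:a(n)\neq\chi_{0}(n)\}$ is not an arbitrary sparse set but an automatic one, so that its trace on the primes is rigid: an infinite exceptional set of primes would, via the finiteness of the $p$-kernel together with multiplicativity, be forced to contain a full residue class of primes coprime to the modulus, whose reciprocal sum diverges -- a contradiction. (Alternatively, this last step can be carried out through the structure theorem for automatic sequences in terms of their transition monoids, cf.\ \cite{Muellner2017}, which exhibits $a$, outside a sparse set, as depending only on a bounded window of base-$p$ digits -- hence only on $n\bmod p^{L}$ -- from which the character emerges directly.) The remaining ingredients, namely the reduction to a prime base and the passage from primes to prime powers with a single character, are comparatively routine.
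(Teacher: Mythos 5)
The paper does not actually prove this statement: it is imported verbatim as Theorem~1 of \cite{Konieczny2019} and used as a black box (see Corollary~\ref{c:KonMul} for how it is repackaged). So there is no in-paper proof to compare against, and your proposal is a de novo attempt. As such it has the right high-level shape (reduce to showing $a(q)$ eventually depends on $q\bmod d$, then pass from primes to prime powers via multiplicativity), but it breaks down at the step you yourself single out as hard.

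First, a preliminary issue: the claimed reduction ``removing the factors of a composite base one prime at a time\dots reduces the latter to a prime base $p$'' is circular in this paper's logic. That automatic multiplicative sequences are $p$-automatic for a prime $p$ is the opening assertion of Theorem~\ref{th_main}, and the paper's proof of that fact \emph{uses} Theorem~\ref{th_konieczny}. Luckily you don't need the reduction: the statement of Theorem~\ref{th_konieczny} says nothing about the base, and the finite $\lambda$-kernel is available for composite $\lambda$ as well.

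The substantive gap is the passage from $\sum_{q:\,a(q)\neq\chi_0(q)}1/q<\infty$ to a \emph{finite} exceptional set. Your argument is that an automatic set meeting infinitely many primes ``would be forced to contain a full residue class of primes coprime to the modulus.'' This is an unproven assertion, and it is not believable in that generality: automatic sets can be extremely thin (e.g.\ $\{p^k\}$) and there is no structural principle forcing an infinite automatic set of primes to absorb an entire Dirichlet class. The parenthetical alternative also misdescribes the structure theorem of \cite{Muellner2017}: it does \emph{not} say that an automatic sequence depends only on $n\bmod p^L$ outside a sparse set (this is already false for Thue--Morse). Finally, in the other Halász branch, ``all twisted means vanish $\Rightarrow a(q)=0$ for all but finitely many primes $q$'' is not a valid deduction from the existence of densities alone. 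These three spots are precisely where the real combinatorial content of \cite{Konieczny2019} (or the multiplicative-rigidity machinery of Klurman--Kurlberg) lives, and your sketch does not supply a substitute for it. The surrounding reductions (the eventually-periodic case via Lemma~\ref{le_eventually_periodic} and Dirichlet's theorem, the restriction of $|a(q)|$ to $\{0,1\}$ outside finitely many primes, and the extension from primes to prime powers) are sound.
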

It will be convenient to refer to a multiplicative automatic sequence as being \emph{sparse} or \emph{dense} if the function $\chi$ in the theorem above is identically zero or a Dirichlet character, respectively. Likewise, we will refer to the corresponding cases of our main theorem as the sparse case and the dense case.
%In fact, \cite{Konieczny2019} contains marginally more precise (and more technical) information in each of the two cases.
%  	\begin{proposition}[Proposition 2.1 and Corollary 3.5 of~\cite{Konieczny2019}]\label{th_konieczny-2}
%	Let $a:\N \to \C$ be a $\lambda$-automatic multiplicative sequence. If $a$ is sparse then its support $\{ n \in \N \ : \ a(n) \neq 0 \}$ is a finite union $\bigcup_{i = 1}^s \{m_i \lambda^{\ell_i n} \ : \ n \in \N_0\}$ ($s \in \N_0$, $m_1,\dots,m_s \in \N$, $\ell_1,\dots,\ell_s \in \N_0$) of geometric progressions with ratios that are powers of $\lambda$. If $a$ is dense then there exists a periodic sequence $b \colon \N \to \C$ such that $a(n) = b(n)$ for all $n$ coprime to $\lambda$.
%	\end{proposition}
	
%Note that in Theorem \ref{th_konieczny} above we can always increase $p_\ast$.
We note that if $k$ is a modulus of $\chi$ in Theorem~\ref{th_konieczny}, then replacing $\chi$ by the induced character of modulus $k' ={\rm lcm}(k, p_{\ast}!)$ (and replacing $k$ by $k'$), we conclude that $a(n)=\chi(n)$ for all $n\in\N$ coprime to $k$. This lets us reformulate the theorem above in the following form, which will be more convenient.
	
	\begin{corollary}\label{c:KonMul}
		Let $a: \N \to \C$ be an automatic multiplicative sequence.
		Then there exist coprime integers $h$ and $\lambda$ and a sequence $\chi \colon \N \to \C$ which is either identically zero or a Dirichlet character of modulus $h \lambda$ such that $a$ is $\lambda$-automatic  and $a(n) = \chi(n)$ for all $n \in \N$ that are coprime to $h \lambda$.
	\end{corollary}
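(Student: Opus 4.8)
The plan is to deduce Corollary~\ref{c:KonMul} directly from Theorem~\ref{th_konieczny} together with the elementary facts about automaticity recorded earlier. First I would invoke Theorem~\ref{th_konieczny} to obtain a threshold $p_*$ and a sequence $\chi$ which is either identically zero or a Dirichlet character of some modulus $k$, with $a(n) = \chi(n)$ for every $n$ not divisible by any prime $p < p_*$. In the identically zero case there is nothing more to extract from $\chi$; in the Dirichlet-character case, as the paragraph preceding the corollary indicates, I would replace $\chi$ by the character it induces of modulus $k' := \lcm(k, p_*!)$. Since every prime $p < p_*$ divides $p_*!$ and hence $k'$, the congruence $a(n) = \chi(n)$ now holds for all $n$ coprime to $k'$: indeed if $(n, k') = 1$ then $n$ is in particular divisible by no prime $p < p_*$, so Theorem~\ref{th_konieczny} applies, and for such $n$ the induced character agrees with the original $\chi$.

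Next I would handle the base of automaticity. The sequence $a$ is automatic, hence $\mu$-automatic for some integer $\mu \geq 2$. Write $\mu = \prod_i q_i^{a_i}$ for its prime factorization and set $\lambda := \prod_i q_i$ to be its radical; by Lemma~\ref{le_power_lambda} (applied repeatedly, or after passing to a common power) $a$ is also $\lambda$-automatic, and $\lambda$ is squarefree. Now I enlarge the modulus once more: replace the modulus by $h\lambda$, where $h$ is chosen so that $h\lambda$ is a multiple of $k'$ and $(h,\lambda)=1$ — concretely, take $h := k' / \gcd(k', \lambda^\infty)$ times an appropriate factor, or more cleanly, let $m := \lcm(k', \lambda)$, write $m = h\lambda'$ where $\lambda'$ collects exactly the primes dividing $\lambda$ (so $\lambda \mid \lambda'$, in fact since $\lambda$ is squarefree and $\lambda' \mid m$ we can arrange $\lambda' = \lambda^t$ for suitable $t$, and then further replace $\lambda$ by $\lambda^t$ using Lemma~\ref{le_power_lambda} again). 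The upshot is integers $h$ and $\lambda$ with $(h,\lambda)=1$, $h\lambda$ a multiple of $k'$, $a$ is $\lambda$-automatic, and since $(n,h\lambda)=1$ implies $(n,k')=1$, we still have $a(n) = \chi(n)$ for all $n$ coprime to $h\lambda$. Finally, if $\chi$ was a Dirichlet character of modulus $k'$, then $\chi$ viewed as the character induced of modulus $h\lambda$ is a Dirichlet character of modulus $h\lambda$, so all the asserted properties hold; if $\chi$ was identically zero it remains so. This completes the reformulation.

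The only mild subtlety — and the step I would be most careful with — is the bookkeeping that simultaneously makes the modulus divisible by the smooth part $p_*!$, coprime to $\lambda$ in the required factored form, and compatible with the base of automaticity, all while staying squarefree enough that Lemma~\ref{le_power_lambda} lets me keep calling the automaticity base ``$\lambda$''. None of this is deep: it is just the observation that one may always pass from $\lambda$ to any power $\lambda^t$ without losing automaticity, and that inducing a Dirichlet character up to a multiple of its modulus changes nothing about the values at arguments coprime to the new modulus. I would present it as: set $\lambda$ to be a squarefree base for which $a$ is automatic, set $h\lambda = \lcm(k, p_*!, \lambda)$ arranged with $(h,\lambda')=1$ after absorbing all common prime powers into $\lambda' = \lambda^t$ and renaming, and check the two displayed properties, which are then immediate.
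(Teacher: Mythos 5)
Your overall plan is very close to the paper's, but there is a genuine gap in the step where you pass from the base $\mu$ to its radical. You claim that if $a$ is $\mu$-automatic and $\lambda := \mathrm{rad}(\mu) = \prod_i q_i$, then $a$ is $\lambda$-automatic ``by Lemma~\ref{le_power_lambda} (applied repeatedly, or after passing to a common power).'' Lemma~\ref{le_power_lambda} only relates $\lambda$-automaticity to $\lambda^k$-automaticity, and $\mu$ and $\mathrm{rad}(\mu)$ have a common power only when all the exponents $a_i$ in $\mu = \prod_i q_i^{a_i}$ are equal. Otherwise they are multiplicatively independent: for instance, with $\mu = 12 = 2^2 \cdot 3$ and $\mathrm{rad}(\mu) = 6$, the equation $12^m = 6^n$ forces both $2m = n$ and $m = n$, which is impossible for $m \geq 1$. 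In such a case a $12$-automatic sequence is $6$-automatic only if it is eventually periodic (by Cobham's Theorem~\ref{th_cobham}), which is certainly not the case for general automatic sequences. So the reduction to a squarefree $\lambda$ does not go through, and the later bookkeeping step ``we can arrange $\lambda' = \lambda^t$'' leans on this unsupported squarefreeness in the same way.

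The fix is to drop the radical reduction entirely; the corollary never asks for $\lambda$ to be squarefree. The paper's proof keeps the original automaticity base: it starts from some $\lambda_0$ for which $a$ is $\lambda_0$-automatic, takes $k$ (the modulus from the preceding remark, already enlarged to kill all primes $< p_*$), and factors $k = k_1 k_2$ with $k_1$ supported on the primes dividing $\lambda_0$ and $(k_2, \lambda_0) = 1$. Since every prime dividing $k_1$ divides $\lambda_0$, one has $k_1 \mid \lambda_0^\ell$ for some $\ell$, and then $\lambda := \lambda_0^\ell$, $h := k_2$ gives $(h,\lambda)=1$, $k \mid h\lambda$, and $a$ is $\lambda$-automatic by Lemma~\ref{le_power_lambda}. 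Your instinct to raise $\lambda$ to a power $\lambda^t$ so that it swallows the part of the modulus sharing primes with it is exactly right — that is what the choice of $\ell$ does — but it must be applied to the base you actually have, not to its radical.
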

	\begin{proof}
	Pick a base $\lambda_0 \in \N$ such that $a$ is $\lambda_0$-automatic. By the remark above, there exist an integer $k \in \N$ and a sequence $\chi_0$ which is either identically zero or a Dirichlet character of modulus~$k$ such that $a(n) = \chi_0(n)$ for all $n$ coprime to $k$.
Now, we can decompose $k = k_1 \cdot k_2$ so that $(\lambda_0, k_2) = 1$ and $k_1 \mid \lambda_0^{\ell}$ for some $\ell \in \N$.
		We take $\lambda := \lambda_0^{\ell}$, $h := k_2$ and see that $a$ is $\lambda$-automatic by virtue of Lemma~\ref{le_power_lambda}. Furthermore, inducing to modulus $h\lambda$, we obtain $\chi$ which is either identically zero or a Dirichlet character of modulus $h \lambda$ and the result follows.
	\end{proof}
	
	Since the integer $h\lambda$ will play a crucial role, it is convenient to make the following definition.
	
	\begin{definition}\label{d:def} We let $\mathcal{C}$ denote the set of integers coprime to $h \lambda$.
	\end{definition}

We also recall that the terms ``sparse'' and ``dense'' were introduced above.	
We would like to stress that in the dense case $a$ must be primitive (in fact, $a$ must be Toeplitz, see the remarks after Lemma~\ref{l:prz3}), while in the sparse case it is not.
  	
\section{Proof of the Main Theorem in the dense case}

Let $a$ be an automatic multiplicative sequence, as in the assumptions of Theorem \ref{th_main}, and let $h,\lambda$ and $\chi$ be given as in Corollary \ref{c:KonMul}. In this section we prove Theorem \ref{th_main} in the dense case, meaning that we assume that $\chi$ is a Dirichlet character of modulus $h\lambda$. It will be convenient to denote $\alpha(p) := \nu_p(h\lambda)$ for $p$ prime.

As we have seen in Subsection~\ref{subsec_dirichlet}, we can decompose $\chi$ into the product of Dirichlet characters of coprime moduli:
\[
\chi = \chi_h \cdot \chi_{\lambda} = \prod_{p \mid h \lambda} \chi_{p^{\alpha(p)}}.
\]
As a consequence, for $n \in \mathcal{C}$, we have
    		\begin{equation}\label{dodana1}
    			a(n) = \chi(n) = \prod_{p \mid h\lambda} \chi_{p^{\alpha(p)}}(n).
    		\end{equation}

For a general $n \in \N$, we can find a decomposition
\begin{equation}\label{eq:n=product}
    		n = n' \cdot \prod_{p \mid h\lambda} p^{\nu_p(n)}
\end{equation}
 as the product of prime divisors of $h \lambda$ and an element of $\mathcal{C}$, with $n'$ given by
\begin{equation}\label{eq:def-of-n'}
	n' = n/ \prod_{p \mid h\lambda} p^{\nu_p(n)}.
\end{equation}
Since $a$ is multiplicative and the factors $n'$ and $p^{\nu_p(n)}$ (for $p \mid h\lambda$) in \eqref{eq:n=product} are pairwise coprime, it follows that
    \begin{align}\label{eq_decompose_a}
      a(n) = a(n') \cdot \prod_{p \mid h\lambda } a(p^{\nu_{p}(n)}).
    \end{align}
We note that $n'$ given by \eqref{eq:def-of-n'} belongs to $\mathcal{C}$. Hence, we can replace the first occurrence of $a$ in the decomposition \eqref{eq_decompose_a} with $\chi$.
    	This and \eqref{dodana1} allows us to rewrite \eqref{eq_decompose_a} as follows
    	\begin{align}\label{eq_decompose_a_2}
    	a(n) &=
    	%\prod_{p \mid h\lambda } \chi_{p^{\alpha(p)}}(n') \cdot \prod_{p \mid h\lambda } a(p^{\nu_p(n)})	\\ &=
    	 \prod_{p \mid h\lambda } \chi_{p^{\alpha(p)}}
    	 \left( \frac{n}{ \prod_{q \mid h\lambda} q^{\nu_q(n)} } \right)
    	 \cdot \prod_{p \mid h\lambda } a(p^{\nu_p(n)}).
    	\end{align}

Our next goal is to simplify the decomposition given in \eqref{eq_decompose_a_2} above. Towards this end, we introduce a new piece of notation (cf.\ notation $n_{i}$ in Section~\ref{subsec_dirichlet}%\footnote{JK: Are we sure we want to keep it?}
). For $p \mid h\lambda$, we let $\bar{p}$ be an integer determined uniquely modulo $h\lambda$ by the following system of congruences:
	\begin{align}\label{eq_def_overline}
	\begin{split}
		\overline{p} &\equiv 1 \bmod p^{\alpha(p)}\\
		\overline{p} &\equiv p \bmod h \lambda/p^{\alpha(p)}.
	\end{split}
	\end{align}
This definition is set up so that, in particular,  for each $p \mid h\lambda$, we have
\begin{equation}\label{kreska}
 \chi_{h\lambda/p^{\alpha(p)}}(p)=\chi(\overline{p}).
\end{equation}

We are now ready to write \eqref{eq_decompose_a_2} in a more condensed form.
    	\begin{proposition}\label{pr_decomposition_1}
    		With the notation from \eqref{eq_def_overline}, for all $n \in \N$, we have
    		\begin{align*}
    			a(n) = \prod_{p \mid h\lambda} \chi_{p^{\alpha(p)}}\rb{\frac{n}{p^{\nu_{p}(n)}}}
    			\cdot  \frac{a(p^{\nu_{p}(n)})}{\chi(\overline{p})^{\nu_{p}(n)}}.
    		\end{align*}
    	\end{proposition}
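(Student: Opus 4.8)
The goal is to rewrite \eqref{eq_decompose_a_2} so that the factor $\chi_{p^{\alpha(p)}}$ depends only on the $p$-free part $n/p^{\nu_p(n)}$ of $n$, rather than on the full product $n/\prod_{q}q^{\nu_q(n)}$. The key algebraic input is that the characters $\chi_{p^{\alpha(p)}}$ have pairwise coprime moduli, so $\chi_{p^{\alpha(p)}}(q) = 1$ for any prime $q \mid h\lambda$ with $q \neq p$. The plan is therefore to start from \eqref{eq_decompose_a_2} and, for each fixed $p \mid h\lambda$, manipulate the single factor
\[
\chi_{p^{\alpha(p)}}\!\left( \frac{n}{\prod_{q \mid h\lambda} q^{\nu_q(n)}} \right).
\]

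\textbf{Main step.} First I would use complete multiplicativity of $\chi_{p^{\alpha(p)}}$ to split off the powers of primes dividing $h\lambda$ that are different from $p$:
\[
\chi_{p^{\alpha(p)}}\!\left( \frac{n}{\prod_{q \mid h\lambda} q^{\nu_q(n)}} \right)
= \chi_{p^{\alpha(p)}}\!\left( \frac{n}{p^{\nu_p(n)}} \right) \cdot \prod_{\substack{q \mid h\lambda \\ q \neq p}} \chi_{p^{\alpha(p)}}(q)^{-\nu_q(n)}.
\]
Since $q \mid h\lambda/p^{\alpha(p)}$ when $q \neq p$, and $\chi_{p^{\alpha(p)}}$ has modulus $p^{\alpha(p)}$ which is coprime to $q$, we have $\chi_{p^{\alpha(p)}}(q) = 1$ — here one must be slightly careful: $\chi_{p^{\alpha(p)}}$ being a Dirichlet character of modulus $p^{\alpha(p)}$ means it is nonzero and completely multiplicative on integers coprime to $p$, but it need not take the value $1$ on all such integers. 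The correct statement is not that $\chi_{p^{\alpha(p)}}(q)=1$, but that these extra factors reassemble via \eqref{kreska} into the correction term $\chi(\overline p)^{-\nu_p(n)}$. Concretely, I would instead keep the factor $\chi_{p^{\alpha(p)}}\!\left(n/p^{\nu_p(n)}\right)$ and, separately, observe that in \eqref{eq_decompose_a_2} the collection of cross-terms $\chi_{p^{\alpha(p)}}(q)^{-\nu_q(n)}$ ranging over all ordered pairs $(p,q)$ with $p\neq q$, $p,q\mid h\lambda$, regroups (swapping the roles of the indices) into $\prod_p \prod_{q\neq p}\chi_{q^{\alpha(q)}}(p)^{-\nu_p(n)} = \prod_p \chi_{h\lambda/p^{\alpha(p)}}(p)^{-\nu_p(n)}$, because $\chi_{h\lambda/p^{\alpha(p)}} = \prod_{q\neq p}\chi_{q^{\alpha(q)}}$. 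By \eqref{kreska} this equals $\prod_p \chi(\overline p)^{-\nu_p(n)}$, which supplies exactly the denominators $\chi(\overline p)^{\nu_p(n)}$ in the claimed formula.

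\textbf{Assembling.} Putting this together: from \eqref{eq_decompose_a_2},
\[
a(n) = \prod_{p\mid h\lambda} \chi_{p^{\alpha(p)}}\!\left(\frac{n}{p^{\nu_p(n)}}\right) \cdot \prod_{p\mid h\lambda}\prod_{\substack{q\mid h\lambda\\ q\neq p}}\chi_{p^{\alpha(p)}}(q)^{-\nu_q(n)} \cdot \prod_{p\mid h\lambda} a(p^{\nu_p(n)}),
\]
and the middle double product equals $\prod_{p\mid h\lambda}\chi(\overline p)^{-\nu_p(n)}$ by the regrouping just described. Combining the last two products gives $\prod_{p\mid h\lambda} a(p^{\nu_p(n)})/\chi(\overline p)^{\nu_p(n)}$, which is the assertion of Proposition~\ref{pr_decomposition_1}.

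\textbf{Expected obstacle.} The only genuinely delicate point is the index-swap/regrouping of the cross-terms and confirming that the exponents match: one must check that the product of $\chi_{p^{\alpha(p)}}(q)^{-\nu_q(n)}$ over $q\neq p$ (for fixed $p$), when the outer product over $p$ is expanded and the summation variables are interchanged, yields precisely $\prod_p \left(\prod_{q\neq p}\chi_{q^{\alpha(q)}}(p)\right)^{-\nu_p(n)}$, and that $\prod_{q\neq p}\chi_{q^{\alpha(q)}} = \widetilde\chi_{\,p^{\alpha(p)}}\cdot\chi_{h\lambda/p^{\alpha(p)}}$ restricts to $\chi_{h\lambda/p^{\alpha(p)}}$ on the relevant arguments. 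Everything else is bookkeeping with complete multiplicativity of Dirichlet characters and the defining congruences \eqref{eq_def_overline}.
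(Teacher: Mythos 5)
Your proof, after the self-correction in the ``Main step'' paragraph, is essentially identical to the paper's: both expand \eqref{eq_decompose_a_2} via complete multiplicativity of $\chi_{p^{\alpha(p)}}$, swap the two indices in the resulting double product of cross-terms, collapse $\prod_{q\neq p}\chi_{q^{\alpha(q)}}$ to $\chi_{h\lambda/p^{\alpha(p)}}$, and then invoke \eqref{kreska}. Do note that the claim in your ``Proof plan'' that $\chi_{p^{\alpha(p)}}(q)=1$ whenever $q\neq p$ is false (coprime modulus only gives $\chi_{p^{\alpha(p)}}(q)\neq 0$), but since you retract it immediately and the actual argument never uses it, the proof as a whole is sound.
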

\begin{proof}
It follows from \eqref{eq_decompose_a_2} and the fact that Dirichlet characters are completely multiplicative that
\begin{align}\label{eq:51:1}
   a(n) &= \prod_{p \mid h\lambda } \rb{\chi_{p^{\alpha(p)}}\rb{\frac{n}{p^{\nu_p(n)}}} a(p^{\nu_p(n)}) \prod_{\substack{ q \mid h\lambda \\ q \neq p}} \chi_{p^{\alpha(p)}}(q)^{-\nu_q(n)}}.
\end{align}
Exchanging the order of multiplication in the innermost product yields:
\begin{align}\label{eq:51:2}
 \prod_{p \mid h\lambda } \rb{ \prod_{\substack{ q \mid h\lambda \\ q \neq p}} \chi_{p^{\alpha(p)}}(q)^{\nu_q(n)}} & =
  \prod_{q \mid h\lambda } \rb{ \prod_{\substack{ p \mid h\lambda \\ p \neq q}} \chi_{p^{\alpha(p)}}(q)^{\nu_q(n)}} \\
 &= \prod_{q \mid h \lambda} \chi_{h\lambda/q^{\alpha(q)}}\rb{q}^{\nu_q(n)}.
\end{align}
It remains to recall \eqref{kreska} and insert \eqref{eq:51:2} into \eqref{eq:51:1}.
\end{proof}

The factors in the decomposition produced by Proposition \ref{pr_decomposition_1} correspond to different prime divisors $p$ of $h\lambda$. We are already satisfied with the factors coming from $p \mid \lambda$ and proceed to prove additional properties for $p \mid h$.
Therefore, we will use the following lemma. %\CM{Maybe there is a better place for that?}
\begin{lemma}\label{lem_const_to_periodic}
	Let $p$ be a prime number and $f: \N \to \C$ be such that $\gamma \mapsto f(p^{\gamma})$ is eventually constant. Then
	\begin{align*}
		n \mapsto f(p^{\nu_p(n)})
	\end{align*}
	is periodic.
	
	If $\gamma \mapsto f(p^{\gamma})$ is eventually equal to zero, then 
	\begin{align}\label{eq_lem_const_to_periodic}
		n \mapsto f(p^{\nu_p(n)}) \cdot \chi\rb{\frac{n}{p^{\nu_p(n)}}}
	\end{align}
	is periodic for any Dirichlet character $\chi$ of modulus $p^{\alpha}$, for an arbitrary $\alpha \in \N$.
\end{lemma}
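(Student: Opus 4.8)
The plan is to exhibit an explicit period in each case directly from the definition of $\nu_p$, using only the elementary fact \eqref{prz15} about the valuation of a sum. First I would fix a threshold $\gamma_0 \geq 1$ beyond which $\gamma \mapsto f(p^\gamma)$ is constant (for the first assertion) or zero (for the second); since enlarging $\gamma_0$ is harmless, assuming $\gamma_0 \geq 1$ costs nothing, and one can note at the outset that if $\gamma_0 = 0$ were forced then both sequences in question are outright constant (resp.\ identically zero), hence trivially periodic.

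For the first claim I would show that $n \mapsto f(p^{\nu_p(n)})$ has period $p^{\gamma_0}$. The argument is a dichotomy on $n \bmod p^{\gamma_0}$: if $p^{\gamma_0} \mid n$ and $n \equiv n' \pmod{p^{\gamma_0}}$, then $p^{\gamma_0} \mid n'$ as well, so $\nu_p(n), \nu_p(n') \geq \gamma_0$ and $f$ takes the same (constant) value on $p^{\nu_p(n)}$ and $p^{\nu_p(n')}$; if $p^{\gamma_0} \nmid n$, then $\nu_p(n) < \gamma_0 \leq \nu_p(n'-n)$, so by \eqref{prz15} we get $\nu_p(n') = \nu_p(n)$ and the two values again coincide. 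Note this half uses nothing about the constant being zero.

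For the second claim I would show that \eqref{eq_lem_const_to_periodic} has period $p^{\gamma_0+\alpha}$. Take $n \equiv n' \pmod{p^{\gamma_0+\alpha}}$ and set $j = \nu_p(n)$. If $j \geq \gamma_0$ then $f(p^j) = 0$, so \eqref{eq_lem_const_to_periodic} vanishes at $n$; and since $p^{\gamma_0+\alpha} \mid n'-n$, the valuation of the sum $n' = n + (n'-n)$ is at least $\min(j, \gamma_0+\alpha) \geq \gamma_0$, so \eqref{eq_lem_const_to_periodic} vanishes at $n'$ too. If $j < \gamma_0$, then $\nu_p(n'-n) \geq \gamma_0 + \alpha > j$, so \eqref{prz15} gives $\nu_p(n') = j$; writing $n = p^j m$ and $n' = p^j m'$ with $p \nmid m, m'$, we have $m \equiv m' \pmod{p^{\gamma_0+\alpha-j}}$, hence $m \equiv m' \pmod{p^\alpha}$ because $\gamma_0 - j \geq 1$; since $\chi$ has modulus $p^\alpha$ this forces $\chi(m) = \chi(m')$, and therefore $f(p^j)\chi(m) = f(p^j)\chi(m')$, i.e.\ \eqref{eq_lem_const_to_periodic} agrees at $n$ and $n'$.

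There is no genuine obstacle here; the only point requiring care is the case distinction at the threshold in the second claim — specifically, when $\nu_p(n)$ lands in the middle range $[\gamma_0, \gamma_0+\alpha)$ one must still conclude that \eqref{eq_lem_const_to_periodic} vanishes at the shifted point $n'$ — and this is exactly why the period is taken with the extra factor $p^\alpha$ rather than just $p^{\gamma_0}$. I would present both period computations, and no appeal to automaticity or to Lemma~\ref{le_eventually_periodic} is needed.
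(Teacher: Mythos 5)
Your proof is correct and follows essentially the same route as the paper's: in both parts you split on whether $\nu_p(n)$ is below or at least the threshold, use the relation \eqref{prz15} to control the valuation of the shifted argument, and in the second part exploit the $p^{\alpha}$-periodicity of $\chi$ to conclude with the same period $p^{\gamma_0+\alpha}$. The only cosmetic difference is that you phrase it as comparing $n$ with $n' \equiv n \pmod{p^{\text{period}}}$, while the paper writes $n + kp^{\text{period}}$.
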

\begin{proof}
	We denote by $\gamma_p$ the least integer such that the sequence $\gamma\mapsto f(p^\gamma)$ is constant for $\gamma\geq \gamma_p$. If $\nu_p(n)\geq \gamma_p$, then also $\nu_p(n+kp^{\gamma_p})\geq\gamma_p$ for each $k\geq 0$, so the sequence
\begin{align}\label{eq_periodic_1}
	k\mapsto f(p^{\nu_p(n+kp^{\gamma_p})})
\end{align}
 is also constant.
If $\nu_p(n)<\gamma_p$ then
$$
\nu_p(n+kp^{\gamma_p})=\nu_p(n)
$$
for each $k\geq 0$, so the more the sequence~\eqref{eq_periodic_1} is constant. It follows that the sequence
$$
n\mapsto f(p^{\nu_{p}(n)})$$
is $p^{\gamma_p}$ periodic.

Let us now assume that $f(p^{\gamma}) = 0$ for all $\gamma \geq \gamma_p$.
%We see that $\nu_p(n) = \nu_p(n+p^{\gamma_p + \alpha})$, unless $\nu_p(n) \geq p^{\gamma_p + \alpha}$.
Suppose first that $\nu_p(n) < \gamma_p$. 
We see directly that $\nu_p(n + kp^{\gamma_p+\alpha}) = \nu_p(n)$ for any $k \in \N$.
Thus, 
\begin{align*}
	f(p^{\nu_p(n+kp^{\gamma_p + \alpha})}) \cdot \chi\rb{\frac{n+kp^{\gamma_p+\alpha}}{p^{\nu_p(n+kp^{\gamma_p+\alpha})}}}
		&= f(p^{\nu_p(n)}) \cdot \chi\rb{\frac{n}{p^{\nu_p(n)}} + kp^{\gamma_p + \alpha-\nu_p(n)}}\\
		&= f(p^{\nu_p(n)}) \cdot \chi\rb{\frac{n}{p^{\nu_p(n)}}},
\end{align*}
where the last equality holds, since $\chi$ is $p^{\alpha}$-periodic.
Suppose now that $\nu_p(n) \geq \gamma_p$.
Obviously, $\nu_p(n+kp^{\gamma_p+\alpha}) \geq \gamma_p$, for every $k \in \N$.
Thus,
\begin{align*}
	f(p^{\nu_p(n+kp^{\gamma_p + \alpha})}) \cdot \chi\rb{\frac{n+kp^{\gamma_p+\alpha}}{p^{\nu_p(n+kp^{\gamma_p+\alpha})}}} = 0
\end{align*}
and we have shown in total, that \eqref{eq_lem_const_to_periodic} is periodic with period $p^{\gamma_p + \alpha}$.
\end{proof}

\begin{lemma}\label{lem:a(q^gamma)-ev-constant}
	For any prime $q \mid h$, the sequence
	\begin{align*}
		n \mapsto \chi_{q^{\alpha(q)}}\rb{\frac{n}{q^{\nu_{q}(n)}}}
    			\cdot  \frac{a(q^{\nu_{q}(n)})}{\chi(\overline{q})^{\nu_{q}(n)}}
	\end{align*}
	is periodic.
\end{lemma}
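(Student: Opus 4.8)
The goal is to show that for a prime $q \mid h$, the sequence $n \mapsto \chi_{q^{\alpha(q)}}(n/q^{\nu_q(n)}) \cdot a(q^{\nu_q(n)})/\chi(\overline q)^{\nu_q(n)}$ is periodic. In view of Lemma~\ref{lem_const_to_periodic} (applied with $f$ defined by $f(q^\gamma) := a(q^\gamma)/\chi(\overline q)^\gamma$ and the character $\chi_{q^{\alpha(q)}}$ of modulus $q^{\alpha(q)}$), it suffices to prove that the sequence $\gamma \mapsto a(q^\gamma)/\chi(\overline q)^\gamma$ is eventually constant, and moreover that if it is not eventually equal to a nonzero constant then it is eventually $0$ — but actually Lemma~\ref{lem_const_to_periodic} already handles both alternatives (eventually constant, or eventually zero), so the single thing I need is: \emph{the sequence $\gamma \mapsto a(q^\gamma)/\chi(\overline q)^\gamma$ is eventually constant.} Note $\chi(\overline q) \ne 0$: by \eqref{kreska} it equals $\chi_{h\lambda/q^{\alpha(q)}}(q)$, and since $q \mid h$ while $h\lambda/q^{\alpha(q)}$ is coprime to $q$, the argument $q$ is coprime to the modulus $h\lambda/q^{\alpha(q)}$, so the character value is a root of unity, in particular nonzero. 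Hence dividing by $\chi(\overline q)^\gamma$ is harmless.

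So the heart of the matter is to control $\gamma \mapsto a(q^\gamma)$ for $q \mid h$, i.e.\ for a prime $q$ that is coprime to the base $\lambda$ with respect to which $a$ is automatic. The first step is to invoke Lemma~\ref{podciag}: since $a$ is $\lambda$-automatic, the sequence $k \mapsto a(\lambda^k)$ is eventually periodic; but that is about powers of $\lambda$, not powers of $q$, so it is not directly enough. Instead I would use the Pumping Lemma (Lemma~\ref{le_pumping_automatic}) applied to $n = q^\gamma$ for large $\gamma$: this produces a representation $q^\gamma = x\lambda^{\ell_1+\ell_2} + y\lambda^{\ell_1} + z$ with $\ell_2 \ge 1$ and the ``pumped'' values $a(q^\gamma) = a\big(x\lambda^{\ell_1+k\ell_2} + y\lambda^{\ell_1}(\lambda^{k\ell_2}-1)/(\lambda^{\ell_2}-1) + z\big)$ for all $k \ge 0$. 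The key observation is that since $\gcd(q,\lambda)=1$, the $q$-adic valuation of the pumped integer $N_k := x\lambda^{\ell_1+k\ell_2} + \dots$ can be analyzed: one shows that for suitable infinitely many $k$, $N_k$ is again of the form $q^{\gamma'} \cdot u$ with $u \in \mathcal C$ (or one controls $\nu_q(N_k)$ and the coprime part), and then multiplicativity plus the Bell--Bruin--Coons description $a(u) = \chi(u)$ pins down $a(q^\gamma)$ in terms of $a$ at a smaller power of $q$ and a character value, forcing the eventual-constancy of $\gamma \mapsto a(q^\gamma)/\chi(\overline q)^\gamma$. Alternatively — and this is likely the cleaner route the authors intend — one uses the finiteness of the $\lambda$-kernel directly: consider the kernel sequences $n \mapsto a(\lambda^j n + r)$; evaluating these along $n$ running through integers of the form (power of $q$)$\times$(element of $\mathcal C$) and using that multiplication by $\lambda$ inside $a$ only changes the value by the fixed factor $\prod_{p\mid\lambda}\chi_{p^{\alpha(p)}}(\lambda)\cdot a(p^{\nu_p})$-type corrections, one extracts from kernel-finiteness a relation $a(q^{\gamma+T}) = c\, a(q^\gamma)$ for all large $\gamma$ and some fixed $T \ge 1$, $c \in \C$; combined with boundedness of $a$ (automatic sequences take finitely many values) this forces $|c| = 1$ or $c = 0$ on the relevant range, and a short argument identifies $c = \chi(\overline q)^T$, giving eventual constancy of $\gamma \mapsto a(q^\gamma)/\chi(\overline q)^\gamma$.

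I expect the main obstacle to be the bookkeeping that converts kernel-finiteness (a statement about arithmetic progressions with common difference a power of $\lambda$) into a recurrence for $a$ along powers of $q$, precisely because $q$ and $\lambda$ are multiplicatively independent, so powers of $q$ are quite sparse and irregular inside base-$\lambda$ progressions. The device that makes this work is that $a$ is \emph{multiplicative}: writing a typical element of a kernel progression as $n = q^{\nu_q(n)} n'$ with $n' \in \mathcal C$ (after pulling out other small primes too, exactly as in \eqref{eq:n=product}--\eqref{eq_decompose_a}), Proposition~\ref{pr_decomposition_1} already isolates the $q$-factor $\chi_{q^{\alpha(q)}}(n/q^{\nu_q(n)})\cdot a(q^{\nu_q(n)})/\chi(\overline q)^{\nu_q(n)}$; one then shows this factor, as a function of $n$, lies in the $\lambda$-kernel-generated algebra of $a$ and hence is itself $\lambda$-automatic, so that $\gamma \mapsto a(q^\gamma)/\chi(\overline q)^\gamma$ is (up to the periodic character factor) a $\lambda$-automatic function of $q^\gamma$; finally Cobham's theorem (Theorem~\ref{th_cobham}) or a direct kernel argument forces it to be eventually periodic, and being a function of $\gamma$ alone through $q^\gamma$, eventual periodicity in $\gamma$ is what we want, after which Lemma~\ref{lem_const_to_periodic} finishes. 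Care is needed to make sure we end up with eventual \emph{constancy} rather than merely eventual periodicity — this should come out because the expression is forced to be a Dirichlet-character-type value which, once periodic in $\gamma$, must actually stabilize, the valuation $\nu_q$ being constant on each congruence class of $n$ of bounded $q$-adic valuation.
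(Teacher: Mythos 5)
Your reduction step contains a genuine gap. You assert that, by Lemma~\ref{lem_const_to_periodic}, ``the single thing I need is: the sequence $\gamma \mapsto a(q^\gamma)/\chi(\overline q)^\gamma$ is eventually constant,'' claiming that the lemma ``already handles both alternatives.'' It does not. The first part of Lemma~\ref{lem_const_to_periodic} only gives periodicity of $n \mapsto f(q^{\nu_q(n)})$; it says nothing about the product with $\chi_{q^{\alpha(q)}}(n/q^{\nu_q(n)})$. The second part does treat that product, but only under the hypothesis that $\gamma \mapsto f(q^\gamma)$ is eventually \emph{zero}. In the remaining case --- $\gamma \mapsto a(q^\gamma)/\chi(\overline q)^\gamma$ eventually equal to a nonzero constant $C$ --- the function $n \mapsto \chi_{q^{\alpha(q)}}(n/q^{\nu_q(n)})$ is Toeplitz but in general \emph{not} periodic whenever $\chi_{q^{\alpha(q)}}$ is a nonprincipal character (e.g.\ for a nontrivial character mod $8$, the values along $n=2^k$ eventually stabilize at $\chi(T')$ for any candidate period $T=2^t T'$, while along $n = 3\cdot 2^k$ they stabilize at the same $\chi(T')$, contradicting $\chi(1)\neq\chi(3)$). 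Multiplying this nonperiodic Toeplitz function by the nonzero periodic factor $f(q^{\nu_q(n)})$ cannot restore periodicity (restrict to multiples of $q^{\gamma_0}$ to see this). So to finish you must additionally prove that \emph{if} the sequence is not eventually zero, then $\chi_{q^{\alpha(q)}}$ is the principal character. The paper's proof is structured precisely around this: the identity $a(n\lambda^k+r_1)=a(n\lambda^k+r_2)$ extracted from kernel finiteness, after the CRT construction of $n$, evaluates to $\chi_{q^{\alpha(q)}}(s)\cdot a(q^\gamma)/\chi(\overline q)^\gamma = a(q^\beta)\chi_{q^{\alpha(q)}}(r)/\chi(\overline q)^\beta$ for \emph{all} $s$ coprime to $q$ and all large $\gamma$; varying $s$ then simultaneously forces triviality of $\chi_{q^{\alpha(q)}}$ and eventual constancy. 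Your proposal never produces this freedom in $s$, so the triviality of the character is out of reach.

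On the method: the paper does \emph{not} use the pumping lemma for this lemma (your ``route 1''); that device appears in the sparse case (Proposition~\ref{pr_sparse_p}). Your ``route 2'' --- pigeonhole on the finite $\lambda$-kernel --- is closer to what the paper actually does, but the paper is more specific: it pigeonholes to find $r_1\equiv r_2\equiv 1\pmod{h\lambda}$ with $a(n\lambda^k+r_1)=a(n\lambda^k+r_2)$ for all $n$, writes $r_2-r_1=q^\beta r$, and uses the Chinese Remainder Theorem (exploiting $(\lambda,h)=1$) to construct $n$ with $n\lambda^k+r_1 \equiv s q^\gamma \pmod{q^{\gamma+\alpha(q)}}$ and $\equiv 1 \pmod{h/q^{\alpha(q)}}$. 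This pins $\nu_q(n\lambda^k+r_i)$ and the cofactor exactly, and the multiplicativity of $a$ together with $a=\chi$ on $\mathcal C$ lets one evaluate both sides in closed form. Your sketch gestures at a recurrence $a(q^{\gamma+T})=c\,a(q^\gamma)$ but leaves the crucial CRT construction, the identification of $c$, and --- most importantly --- the triviality of $\chi_{q^{\alpha(q)}}$ entirely unaddressed.
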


    \begin{proof}
	The proof splits into two parts.
	We first show that $\gamma \mapsto a(q^{\gamma})/\chi(\overline{q})^{\gamma}$ is eventually constant.
	Then we distinguish the case where this sequence eventually equals zero.
	If this is the case, then we are done by the second part of Lemma~\ref{lem_const_to_periodic}.
	If the sequence is not eventually equal to zero, then we also need to show that $\chi_{q^{\alpha(q)}}$ is trivial, i.e. constant and equal to $1$ on integers coprime to $q$.
	In this case we apply the first part of Lemma~\ref{lem_const_to_periodic} and see that
	\begin{align*}
		n \mapsto \frac{a(q^{\nu_{q}(n)})}{\chi(\overline{q})^{\nu_{q}(n)}}
	\end{align*}
	is periodic.
	
	To prove the claims, we consider the finite $\lambda$-kernel of $a$. It follows by the pigeonhole principle that there exist $k\geq 1$ and $r_1 < r_2 < \lambda^k$ such that
      \begin{align}\label{eq_r_1_r_2}
        r_1 \equiv r_2 \equiv 1 \bmod h \lambda
      \end{align}
      and
      \begin{align}\label{eq_a_r_i_equal}
        a(n \lambda^{k} + r_1) = a(n \lambda^{k} + r_2)
      \end{align}
      for all $n \in \N$.
      By the LHS congruence in~\eqref{eq_r_1_r_2} it follows  that there exists $\beta\geq\alpha(q)$ such that
	\begin{align}\label{eq_diff_r_i}
r_2 - r_1 = q^{\beta} r,
	\end{align}
 where $(r,q) = 1$.

     % For any $\gamma \geq \beta + \alpha(q)$ we find by the Chinese Remainder Theorem some $n$ such that
	We will show that the sequence $(a(q^\gamma)/\chi(\overline{q})^{\gamma})$ is constant for $\gamma\geq \beta +\alpha(q)$.
Let us fix $\gamma\geq \beta+\alpha(q)$ and $0<s<q^{\alpha(q)}$, where $(s,q) = 1$. Using the fact that $(\lambda,h)=1$ and then the Chinese Remainder Theorem, we find some $n$ such that
      \begin{align}\label{dodana2}
	\begin{split}
        		n \lambda^k + r_1 &\equiv s\cdot q^{\gamma} \bmod q^{\gamma + \alpha(q)}\\
		n \lambda^k + r_1 &\equiv 1 \bmod h/q^{\alpha(q)}.
	\end{split}
      \end{align}
	We will show that
	\begin{align}\label{eq_eval_n_r_1}
		a(n\lambda^k+r_1)=\chi_{q^{\alpha(q)}}(s) \cdot a(q^\gamma)/\chi(\overline{q})^\gamma
	\end{align}
	and
	\begin{align}\label{eq_eval_n_r_2}		a(n\lambda^k+r_2)=a(q^\beta)/\chi(\overline{q})^{-\beta}
\chi_{q^{\alpha(q)}}(r).
	\end{align}
	Suppose that \eqref{eq_eval_n_r_1} and \eqref{eq_eval_n_r_2} have already been proved. Then, by \eqref{eq_eval_n_r_2}, $a(n\lambda^k+r_2)$ is independent of $n$, therefore, independent of $s$ and $\gamma$.
	If $a(q^{\beta}) = 0$, then we conclude by \eqref{eq_a_r_i_equal} and \eqref{eq_eval_n_r_1} that $a(q^{\gamma})/\chi(\overline{q})^{\gamma}$ is eventually equal to zero.
	If $a(q^{\beta}) \neq 0$, then we see that $a(n\lambda^k + r_1) \neq 0$ and it is independent of $s$ and $\gamma$, showing that $\chi_{q^{\alpha(q)}}$ is trivial and $a(q^{\gamma})/\chi(\overline{q})^{\gamma}$ is eventually constant.

	%We will evaluate $a(n \lambda^k + r_i)$ very similarly to \eqref{eq_decompose_a_2}. Therefore, we determine $\nu_q(n \lambda^k + r_i) =: m_i$ and show that $\rb{\rb{n \lambda^k + r_i}/{q^{m_i}}, h \lambda} = 1$. Thus we find by the multiplicativity of $a$
	We will now show~\eqref{eq_eval_n_r_1}  and~\eqref{eq_eval_n_r_2} (we proceed similarly to \eqref{eq_decompose_a_2}). Denote $\nu_q(n\lambda^k+r_i)=:m_i$, $i=1,2$. Our aim will be to determine $m_i$ and to show that
$((n\lambda^k+r_i)/q^{m_i},h\lambda)=1$ which, by the multiplicativity  of $a$, yields
	\begin{align*}
		a(n\lambda^k + r_i) = a(q^{m_i}) \cdot a\rb{\frac{n \lambda^k + r_i}{q^{m_i}}} = a(q^{m_i}) \cdot \chi\rb{\frac{n \lambda^k + r_i}{q^{m_i}}}.
	\end{align*}
	Then we decompose $\chi = \chi_{\lambda} \cdot \chi_{q^{\alpha(q)}} \cdot \chi_{h/q^{\alpha(q)}}$ and it only remains to determine the residues of $\rb{n\lambda^k + r_i}/{q^{m_i}}$ modulo $\lambda, q^{\alpha(q)}$ and $h/q^{\alpha(q)}$.

    % We find that $\nu_q(n \lambda^k + r_1) = \gamma$ and $(\rb{n \lambda^k + r_1}/{q^{\gamma}}, h \lambda) = 1$.
	We now determine $m_1$ by showing that $m_1=\gamma$ and also show that $((n\lambda^k+r_1)/q^{\gamma},h\lambda)=1$.
	Indeed, by \eqref{eq_r_1_r_2} and \eqref{dodana2}, we have
	\begin{align*}
		\frac{n\lambda^k + r_1}{q^{\gamma}} &\equiv s \bmod q^{\alpha(q)},
	\end{align*}
	\begin{align*}
		\begin{split}
		\frac{n\lambda^k + r_1}{q^{\gamma}} \equiv q^{-\gamma} \bmod \lambda,
		\end{split}
		\begin{split}
		\frac{n\lambda^k + r_1}{q^{\gamma}}\equiv q^{-\gamma} \bmod h/q^{\alpha(q)},
		\end{split}
	\end{align*}
	where $q^{-\gamma}$ on the right hand side congruences means, respectively, the reciprocals of $q^\gamma$ in $(\Z/\lambda\Z)^\ast$ and $(Z/(h/q^{\alpha})\Z)^\ast$.
	Thus, we have by the discussion above, the multiplicativity of Dirichlet characters and the definition of $\overline{q}$,
	\begin{align*}
		a(n\lambda^k + r_1) = a(q^{\gamma})\cdot \chi_{\lambda}(q)^{-\gamma} \chi_{q^{\alpha(q)}}(s) \cdot \chi_{h/q^{\alpha(q)}}(q)^{-\gamma} = \chi_{q^{\alpha(q)}}(s) \frac{a(q^{\gamma})}{\chi(\overline{q})^{\gamma}},
	\end{align*}
	which shows~\eqref{eq_eval_n_r_1}.

	As $\gamma \geq \beta + \alpha(q)$, we have $\nu_{q}(n \lambda^k + r_2) = \beta$. Indeed, we find similarly to the previous computation (using \eqref{eq_r_1_r_2}, \eqref{eq_diff_r_i} and \eqref{dodana2}) that
	\begin{align*}
		\frac{n\lambda^k + r_2}{q^{\beta}} = \frac{n\lambda^k + r_1 + r q^{\beta}}{q^{\beta}} &\equiv r \bmod q^{\alpha(q)}
	\end{align*}
	\begin{align*}
		\begin{split}
		\frac{n\lambda^k + r_2}{q^{\beta}} &\equiv q^{-\beta} \bmod \lambda
		\end{split}
		\begin{split}
		\frac{n\lambda^k + r_2}{q^{\beta}} &\equiv q^{-\beta} \bmod h/q^{\alpha(q)},
		\end{split}
	\end{align*}
	which shows~\eqref{eq_eval_n_r_2} and the proof is complete.
	%Thus, we have, similarly to above $a(n \lambda^k + r_2) = a(q^{\beta}) \chi(\overline{q})^{-\beta} \chi_{q^{\alpha(q)}}(r)$, which is independent of $n$ and, therefore, independent of $\gamma$ - finishing the proof by \eqref{eq_a_r_i_equal}.
\footnote{We note that there is a slightly simpler proof by using more precise results of the first author, i.e.~\cite[Corollary 3.5]{Konieczny2019}.}
   \end{proof}

\subsection{Prime base}
	Now, we are in a position to prove Theorem~\ref{th_main} in the case, where $\lambda$ is a prime power.
	First, we recall that if $\lambda = p^{\alpha}$ for some prime $p$ and $\alpha \geq 1$, then $a$ being $\lambda$-automatic is equivalent to $a$ being $p$-automatic thanks to Lemma~\ref{le_power_lambda}.
Moreover, we recall that we continue to assume that we are in the dense case (see Subsection \ref{ssec:BBC}).
	\begin{proposition}\label{pr_v_1} If $\lambda = p^\alpha$ is a power of a prime $p$ then the automatic sequence $a$ can be written in the form
    		\begin{align}\label{eq:546:0}
    			a(n) = f_1(\nu_p(n)) \cdot f_2\rb{\frac{n}{p^{\nu_p(n)}}},
    		\end{align}
    		where $f_1(0) = 1$,  $f_1$ is eventually periodic and $f_2$ is multiplicative and periodic.
    	\end{proposition}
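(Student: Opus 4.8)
The plan is to specialise the decomposition of Proposition~\ref{pr_decomposition_1} to the case $\lambda = p^{\alpha}$ and to observe that, apart from the factor attached to the prime $p$ itself, everything occurring in it is periodic. Recall from Corollary~\ref{c:KonMul} that $h$ and $\lambda$ are coprime, so $\alpha(p) = \nu_p(h\lambda) = \alpha$ and the primes dividing $h\lambda$ are $p$ together with the primes $q \mid h$, all of them different from $p$. For a prime $q \mid h$ I set
\[
  b_q(n) := \chi_{q^{\alpha(q)}}\rb{\frac{n}{q^{\nu_q(n)}}}\cdot\frac{a(q^{\nu_q(n)})}{\chi(\overline q)^{\nu_q(n)}},
  \qquad P := \prod_{q\mid h} b_q ,
\]
and similarly write $b_p(n) := \chi_{p^{\alpha(p)}}(n/p^{\nu_p(n)})\cdot a(p^{\nu_p(n)})/\chi(\overline p)^{\nu_p(n)}$, so that Proposition~\ref{pr_decomposition_1} reads $a(n) = b_p(n)\cdot P(n)$. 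Each $b_q$ is multiplicative: for coprime $m,n$ at least one of $\nu_q(m),\nu_q(n)$ vanishes, and using $a(1)=1$ (which holds since $a(1)=\chi(1)=1$ in the dense case) together with the complete multiplicativity of $\chi_{q^{\alpha(q)}}$ one checks $b_q(mn)=b_q(m)b_q(n)$. Hence $P$ is multiplicative with $P(1)=1$.

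Next I would invoke Lemma~\ref{lem:a(q^gamma)-ev-constant}: for each $q\mid h$ the sequence $b_q$ is periodic, hence the finite product $P$ is periodic, say with period $D$. Being multiplicative, $P$ factors as $P(n) = P(p^{\nu_p(n)})\cdot P(n/p^{\nu_p(n)})$ for all $n$. Moreover $\nu_q(p^k) = 0$ for every $q\mid h$, so $P(p^k) = \prod_{q\mid h}\chi_{q^{\alpha(q)}}(p)^{k} = \chi_h(p)^{k} = \chi(\overline p)^{k}$, where the last equality uses \eqref{kreska} and the identity $h\lambda/p^{\alpha(p)} = h$. Substituting these facts into $a = b_p\cdot P$ and cancelling the (nonzero, since $\overline p$ is a unit mod $h\lambda$ by \eqref{eq_def_overline}) factor $\chi(\overline p)^{\nu_p(n)}$ gives
\[
  a(n) = a\rb{p^{\nu_p(n)}}\cdot\rb{\chi_{p^{\alpha(p)}}\rb{\tfrac{n}{p^{\nu_p(n)}}}\cdot P\rb{\tfrac{n}{p^{\nu_p(n)}}}}.
\]
Accordingly I set $f_1(k) := a(p^{k})$ and $f_2(m) := \chi_{p^{\alpha(p)}}(m)\cdot P(m)$, which is the asserted decomposition \eqref{eq:546:0}.

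It then remains to check the required properties. One has $f_1(0) = a(1) = 1$, and since $a$ is $p$-automatic (Lemma~\ref{le_power_lambda}) the sequence $k\mapsto a(p^k)$ is eventually periodic by Lemma~\ref{podciag}. The function $f_2$ is a product of the completely multiplicative function $\chi_{p^{\alpha(p)}}$ and the multiplicative function $P$, hence multiplicative, and it is periodic (with period dividing $p^{\alpha(p)}D$) as a product of two periodic functions.

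The only step that requires genuine work is Lemma~\ref{lem:a(q^gamma)-ev-constant}, which is already at our disposal; everything else is bookkeeping. The mild obstacle, if any, is to notice that the periodic ``error'' $P$ coming from the primes $q\mid h$ can be split between $f_1$ and $f_2$ along the factorisation $n = p^{\nu_p(n)}\cdot(n/p^{\nu_p(n)})$ into coprime parts — which is exactly what multiplicativity of $P$ provides — and to observe that this split leaves $f_1$ eventually periodic (in fact equal to $k\mapsto a(p^k)$) and $f_2$ multiplicative.
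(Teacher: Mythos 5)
Your proof is correct and, at its core, follows the paper's own argument: specialize Proposition~\ref{pr_decomposition_1}, use Lemma~\ref{lem:a(q^gamma)-ev-constant} to conclude that the factors attached to primes $q\mid h$ are periodic, collect them into a periodic multiplicative function (your $P$, the paper's $g$), and split it along the coprime factorization $n = p^{\nu_p(n)}\cdot(n/p^{\nu_p(n)})$ so that one piece is absorbed into $f_1$ and the other into $f_2$. Where you genuinely streamline the argument is in noticing the identity $P(p^k)=\chi_h(p)^k=\chi(\overline p)^k$, which follows from \eqref{kreska} together with $h\lambda/p^{\alpha(p)}=h$ when $\lambda=p^\alpha$. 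This makes the factor $\chi(\overline p)^{\nu_p(n)}$ cancel exactly against $P(p^{\nu_p(n)})$, giving $f_1(k)=a(p^k)$ outright, so that eventual periodicity of $f_1$ follows from Lemma~\ref{podciag} alone. The paper instead defines $f_1(\gamma)=a(p^\gamma)g(p^\gamma)/\chi(\overline p)^\gamma$ and then verifies separately that $\gamma\mapsto g(p^\gamma)$ is eventually periodic via Euler's theorem and the Chinese Remainder Theorem; your computation shows that step is unnecessary, since $g(p^\gamma)$ and $\chi(\overline p)^\gamma$ are in fact equal (and so the paper's $f_1$ coincides with yours). Both routes arrive at the same $f_1$ and $f_2$, but yours gets there with less bookkeeping.
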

    	\begin{proof}
		We recall that by Proposition \ref{pr_decomposition_1},
		\begin{align*}
			a(n) = \chi_{p^{\alpha(p)}}\rb{\frac{n}{p^{\nu_{p}(n)}}}
    			\cdot  \frac{a(p^{\nu_{p}(n)})}{\chi(\overline{p})^{\nu_{p}(n)}} \cdot \prod_{q \mid h} \chi_{q^{\alpha(q)}}\rb{\frac{n}{q^{\nu_{q}(n)}}}
    			\cdot  \frac{a(q^{\nu_{q}(n)})}{\chi(\overline{q})^{\nu_{q}(n)}}.
		\end{align*}
		By Lemma~\ref{lem:a(q^gamma)-ev-constant} and Lemma~\ref{l:prz3} we can replace the last product by a periodic and multiplicative function $g$, which gives in total
		\begin{align*}
			a(n) &= \chi_{p^{\alpha(p)}}\rb{\frac{n}{p^{\nu_{p}(n)}}}
    			\cdot  \frac{a(p^{\nu_{p}(n)})}{\chi(\overline{p})^{\nu_{p}(n)}} \cdot g(n)\\
				&= \chi_{p^{\alpha(p)}}\rb{\frac{n}{p^{\nu_{p}(n)}}} g\rb{\frac{n}{p^{\nu_{p}(n)}}}
    			\cdot  \frac{a(p^{\nu_{p}(n)})}{\chi(\overline{p})^{\nu_{p}(n)}} \cdot g(p^{\nu_p(n)}).
		\end{align*}
		Thus, we choose $f_1 (\gamma) = a(p^{\gamma})/\chi(\overline{p})^{\gamma} \cdot g(p^{\gamma})$ and $f_2 = \chi_{p^{\alpha(p)}} \cdot g$. We see directly, that $f_2$ fulfills the requirements of the proposition and it only remains to consider $f_1$. As $a$ is automatic we conclude that $a(1) = 1$ and, therefore also $f_1(0) = 1$. 
The function $f_1$ is eventually periodic, as it is the product of three periodic functions:
we see by Lemma~\ref{podciag} that $\gamma \mapsto a(p^{\gamma})$ is eventually periodic. The function $\gamma \mapsto 1/\chi(\overline{p})^{\gamma}$ is periodic, as $\chi(\overline{p})$ is a root of unity.
We recall that $g$ is periodic, say with period $k = k' \cdot p^{\alpha}$, where $\gcd(k', p) = 1$. It follows directly from Euler's theorem that $\gamma \mapsto p^{\gamma} \mod k'$ is periodic with period $\phi(k')$ and, clearly, $\gamma \mapsto p^{\gamma} \mod p^{\alpha}$ is eventually constant. Thus, $\gamma \mapsto p^{\gamma} \mod k$ is eventually periodic, by the Chinese Remainder Theorem.

    	\end{proof}

\subsection{Composite case}

    It remains to consider the case where $\lambda$ is composite. The main content of the argument is contained in the following analogue of Lemma \ref{lem:a(q^gamma)-ev-constant}.
	\begin{lemma}\label{lem:a(p^gamma)-ev-constant}
		Suppose that $\lambda$ is not a prime power.
		For any prime $p \mid \lambda$, the sequence
	\begin{align*}
		n \mapsto \chi_{p^{\alpha(p)}}\rb{\frac{n}{p^{\nu_{p}(n)}}}
    			\cdot  \frac{a(p^{\nu_{p}(n)})}{\chi(\overline{p})^{\nu_{p}(n)}}
	\end{align*}
	is periodic.
\end{lemma}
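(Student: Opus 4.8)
The plan is to mimic closely the proof of Lemma~\ref{lem:a(q^gamma)-ev-constant}, exploiting the fact that $\lambda$ is \emph{not} a prime power. As before, we use the pigeonhole principle on the finite $\lambda$-kernel of $a$ to produce $k \geq 1$ and residues $r_1 < r_2 < \lambda^k$ with $r_1 \equiv r_2 \equiv 1 \bmod h\lambda$ and $a(n\lambda^k + r_1) = a(n\lambda^k + r_2)$ for all $n$, so that $r_2 - r_1 = p^{\beta} r$ with $(r,p) = 1$ and $\beta \geq \alpha(p)$ (here $\beta \geq \alpha(p)+1$ in fact, since $p \mid \lambda$ so $p^{\alpha(p)} \mid \lambda \mid r_2 - r_1$ forces $\beta > \alpha(p)$; but $\beta \geq \alpha(p)$ suffices). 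The key new point is that when $\lambda$ is composite we cannot invoke the Chinese Remainder Theorem using coprimality of $\lambda$ and $p$ the way Lemma~\ref{lem:a(q^gamma)-ev-constant} uses coprimality of $\lambda$ and $q \mid h$. Instead, we must \emph{add a free power of $p$ to the multiplier $\lambda^k$}: since $\lambda$ is not a prime power, there is another prime $\ell \mid \lambda$, $\ell \neq p$, and hence we have enough room to impose, for a given $\gamma \geq \beta + \alpha(p)$ and given $0 < s < p^{\alpha(p)}$ with $(s,p) = 1$, congruence conditions of the form
\begin{align*}
	n\lambda^k + r_1 &\equiv s \cdot p^{\gamma} \bmod p^{\gamma + \alpha(p)},\\
	n\lambda^k + r_1 &\equiv 1 \bmod h\lambda/p^{\alpha(p)},
\end{align*}
which is solvable in $n$ because $\gcd(\lambda^k, h\lambda/p^{\alpha(p)}) $ and the $p$-part obstruction can be handled since $\lambda^k$ is only divisible by $p^{k\nu_p(\lambda)}$ and we are free to choose $k$; more carefully, one first fixes $k$ from the pigeonhole step and then checks that the residue of $n\lambda^k$ modulo $p^{\gamma+\alpha(p)}$ can still be made arbitrary once $\gamma$ is large, which holds precisely because $\lambda$ has a prime factor other than $p$ so that $\lambda^k$ is invertible modulo the coprime-to-$p$ part and the $p$-adic valuation of $\lambda^k$ is some fixed finite number.

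Granting solvability of this congruence system, the computation proceeds verbatim as in Lemma~\ref{lem:a(q^gamma)-ev-constant}: we determine $m_1 := \nu_p(n\lambda^k + r_1) = \gamma$ and $m_2 := \nu_p(n\lambda^k + r_2) = \beta$, check that $(n\lambda^k + r_i)/p^{m_i}$ is coprime to $h\lambda$, use multiplicativity of $a$ to write $a(n\lambda^k + r_i) = a(p^{m_i}) \cdot \chi\big((n\lambda^k+r_i)/p^{m_i}\big)$, decompose $\chi = \chi_{\lambda} \cdot \chi_{p^{\alpha(p)}} \cdot \chi_{h/p^{\alpha(p)}}$ — here one has to be a little careful, since now $p \mid \lambda$, so the clean factor $\chi_{p^{\alpha(p)}}$ is a factor of $\chi_\lambda$ rather than of $\chi_h$, but the arithmetic bookkeeping is the same — and arrive at
\begin{align*}
	a(n\lambda^k + r_1) &= \chi_{p^{\alpha(p)}}(s) \cdot \frac{a(p^{\gamma})}{\chi(\overline{p})^{\gamma}},\\
	a(n\lambda^k + r_2) &= \chi_{p^{\alpha(p)}}(r) \cdot \frac{a(p^{\beta})}{\chi(\overline{p})^{\beta}}.
\end{align*}
The right-hand side of the second line is independent of $n$, hence of $s$ and $\gamma$; combined with $a(n\lambda^k + r_1) = a(n\lambda^k + r_2)$, this forces $\gamma \mapsto a(p^{\gamma})/\chi(\overline{p})^{\gamma}$ to be eventually constant, and in the case where it is not eventually zero it also forces $\chi_{p^{\alpha(p)}}$ to be trivial. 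Then Lemma~\ref{lem_const_to_periodic} (its second part in the eventually-zero case, its first part otherwise) yields the periodicity of the displayed sequence.

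The main obstacle I anticipate is verifying the solvability of the congruence system in $n$: the multiplier is $\lambda^k$, not $\lambda$, and $p \mid \lambda$, so one must argue that after fixing $k$ from the pigeonhole argument and taking $\gamma$ large enough (namely $\gamma \geq k\nu_p(\lambda) + \text{something}$, or simply absorbing $\nu_p(\lambda^k)$ into the threshold), the map $n \mapsto n\lambda^k \bmod p^{\gamma + \alpha(p)}$ together with $n \mapsto n\lambda^k \bmod h\lambda/p^{\alpha(p)}$ hits the prescribed residue class. This is exactly where "not a prime power" is used: if $\lambda$ were a power of $p$, then $\lambda^k$ would have unbounded $p$-adic valuation and one could not freely prescribe the residue of $n\lambda^k$ modulo a fixed power of $p$ beyond that valuation — which is consistent with the fact that in the prime-power case the conclusion genuinely fails (that case is handled separately in Proposition~\ref{pr_v_1}, where the relevant factor need not be periodic). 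Once this solvability is in hand, everything else is a routine repetition of the earlier argument with $q$ replaced by $p$ and the roles of $\chi_\lambda$ and $\chi_h$ adjusted.
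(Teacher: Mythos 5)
There is a genuine gap, and it sits exactly at the step you flagged. The congruence system you propose,
\begin{align*}
	n\lambda^k + r_1 &\equiv s \cdot p^{\gamma} \bmod p^{\gamma + \alpha(p)},\\
	n\lambda^k + r_1 &\equiv 1 \bmod h\lambda/p^{\alpha(p)},
\end{align*}
has no solution in $n$ for any $\gamma \geq 1$, and taking $\gamma$ large does not help. Since $p \mid \lambda$ and $(h,\lambda)=1$, we have $\nu_p(\lambda) = \alpha(p) \geq 1$, hence $\nu_p(\lambda^k) = k\alpha(p) \geq 1$ and $n\lambda^k \equiv 0 \bmod p$ for every $n$. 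On the other hand $r_1 \equiv 1 \bmod h\lambda$ forces $r_1 \equiv 1 \bmod p$. Therefore $n\lambda^k + r_1 \equiv 1 \bmod p$ for \emph{every} $n$, so one can never even arrange $p \mid n\lambda^k + r_1$, let alone $\nu_p(n\lambda^k+r_1) = \gamma$. The obstruction lives in $r_1$, not in the target $s\cdot p^\gamma$, so ``absorbing $\nu_p(\lambda^k)$ into the threshold'' cannot fix it; this is also why the claim $m_1 = \nu_p(n\lambda^k+r_1) = \gamma$ in your outline cannot hold.

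What the paper does instead is run the pigeonhole over a restricted set of kernel offsets: it chooses $k \geq 1$ and $r_1 < r_2 < (\lambda/p^{\alpha(p)})^k$ with $r_1 \equiv r_2 \equiv 1 \bmod h\lambda$ such that $a(n\lambda^k + p^{\alpha(p)k}r_1) = a(n\lambda^k + p^{\alpha(p)k}r_2)$ for all $n$. In other words, the kernel offsets used are $p^{\alpha(p)k}r_i$ (still valid residues mod $\lambda^k$), which have $p$-adic valuation exactly $\alpha(p)k$, matching $\nu_p(n\lambda^k)$ up to a unit. The CRT is then applied in the form $n(\lambda/p^{\alpha(p)})^k + r_1 \equiv s\cdot p^\gamma \bmod p^{\gamma + \alpha(p)}$ together with $n\lambda^k + r_1 p^{\alpha(p)k} \equiv 1 \bmod h$, and this is solvable because $(\lambda/p^{\alpha(p)})^k$ is coprime to $p$. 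The resulting $p$-adic valuation is $m_1 = \gamma + \alpha(p)k$ rather than $\gamma$, and an extra factor $\chi_{\lambda/p^{\alpha(p)}}(p^{\alpha(p)k})$ appears on both sides and cancels. This is precisely where ``$\lambda$ is not a prime power'' is used: it guarantees $\lambda/p^{\alpha(p)} > 1$, so the restricted offset set grows without bound as $k\to\infty$ and the pigeonhole is nontrivial. Your high-level plan (mimic Lemma~\ref{lem:a(q^gamma)-ev-constant}, then invoke Lemma~\ref{lem_const_to_periodic}) is the right one, and the concluding bookkeeping you describe is correct once the congruence system is replaced by the paper's version; but as written the CRT step fails and the missing device is this shift of the kernel offsets by $p^{\alpha(p)k}$.

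Two minor points: your parenthetical claim that $\beta \geq \alpha(p)+1$ is not justified (from $p^{\alpha(p)} \mid h\lambda \mid r_2 - r_1$ you only get $\beta \geq \alpha(p)$, which, as you note, is all that is needed); and the decomposition of $\chi$ should be $\chi_{p^{\alpha(p)}}\cdot\chi_{\lambda/p^{\alpha(p)}}\cdot\chi_h$, reflecting that now $p\mid\lambda$ rather than $p\mid h$, which you did correctly anticipate.
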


    \begin{proof}
	The proof is structured very similarly to the proof of Lemma~\ref{lem:a(q^gamma)-ev-constant}.
	The proof splits again into two parts.
	We first show that $\gamma \mapsto a(p^{\gamma})/\chi(\overline{p})^{\gamma}$ is eventually constant.
	Then we distinguish the case where this sequence eventually equals zero.
	If this is the case, then we are done by the second part of Lemma~\ref{lem_const_to_periodic}.
	If the sequence is not eventually equal to zero, then we also need to show that $\chi_{p^{\alpha(p)}}$ is trivial, i.e. constant and equal to $1$ on integers coprime to $p$.
	In this case we apply the first part of Lemma~\ref{lem_const_to_periodic} and see that
	\begin{align*}
		n \mapsto \frac{a(p^{\nu_{p}(n)})}{\chi(\overline{p})^{\nu_{p}(n)}}
	\end{align*}
	is periodic.
	
	To prove the claims, we consider the finite $\lambda$-kernel of $a$. It follows by the pigeonhole principle that there exist $k\geq 1$ and $r_1 < r_2 < (\lambda/p^{\alpha(p)})^k$ such that
	      \begin{align}\label{eq:862:14}
	        		r_1 \equiv r_2 \equiv 1 \bmod h \lambda
	      \end{align}
	      and
	      \begin{align}\label{eq:862:13}
	        a(n \lambda^{k} + p^{\alpha(p)k} r_1) = a(n \lambda^{k} + p^{\alpha(p)k} r_2)
	      \end{align}
	      for all $n \in \N$. (This is the only step, where we need the assumption that $\lambda$ is not a prime power, ensuring $\lambda/p^{\alpha(p)} > 1$.) 				We note that, by \eqref{eq:862:14}, we have $r_2-r_1=p^\beta r$ for some $\beta \geq \alpha(p)$ and $r$ with $p \nmid r$. 	
	
	Take any $\gamma \geq \alpha(p) + \beta$ and $0<s<p^{\alpha(p)}$ with $(s,p) = 1$. Since $(\lambda/p^{\alpha(p)}, p) = 1$ and $(\lambda, h) = 1$, we can find --- using the Chinese Remainder Theorem --- an integer $n$ such that
	\begin{align*}\label{eq_n_crt}
		\begin{split}
		n \cdot (\lambda/p^{\alpha(p)})^k + r_1 &\equiv s \cdot p^{\gamma} \bmod{p^{\gamma + \alpha(p)}},\\
		n \cdot \lambda^k + r_1 p^{\alpha(p) k} &\equiv 1 \bmod{h},
		\end{split}
	\end{align*}
	which can equivalently be written as
	\begin{equation}\label{eq:862:15}
	%n\lambda^k + p^{\alpha(p)k}r_1 \equiv p^{\gamma+\alpha(p)k} \bmod{p^{\gamma + (k+1)\alpha(p)}h}.
		\begin{split}
		n \cdot \lambda^k + r_1 p^{\alpha(p)k} &\equiv s \cdot p^{\gamma+\alpha(p)k} \bmod{p^{\gamma + (k+1)\alpha(p)}},\\
		n \cdot \lambda^k + r_1 p^{\alpha(p) k} &\equiv 1 \bmod{h}.
		\end{split}
		\end{equation}

	Our goal is now again to compute $a(n \lambda^k + 	p^{\alpha(p)k} r_i)$ similarly to \eqref{eq_decompose_a_2}. Therefore, we will first determine $\nu_p(n \lambda^k + p^{\alpha(p)k} r_i) =: m_i$ and then show that $\rb{\rb{n \lambda^k + p^{\alpha(p) k} r_i}/{p^{m_i}}, h \lambda} = 1$. Thus, we will find by the multiplicativity of $a$,
	\begin{align*}
		a(n \lambda^k + p^{\alpha(p) k} r_i) = a(p^{m_i}) \cdot a\rb{\frac{n \lambda^k + p^{\alpha(p) k} r_i}{p^{m_i}}} =  a(p^{m_i}) \cdot \chi\rb{\frac{n \lambda^k + p^{\alpha(p) k} r_i}{p^{m_i}}}.
	\end{align*}
	Then we will decompose $\chi = \chi_{p^{\alpha(p)}} \cdot \chi_{\lambda/p^{\alpha(p)}} \cdot \chi_{h}$, so that it will only remain to determine the residues of $\rb{n \lambda^k + p^{\alpha(p) k} r_i}/{p^{m_i}}$ modulo $p^{\alpha(p)}, \lambda/p^{\alpha(p)}$ and $h$.

	We claim that $m_1 = \nu_p(n\lambda^k + p^{\alpha(p)k}r_1) = \gamma + \alpha(p)k$. Indeed, we have by \eqref{eq:862:15} (for \eqref{eq_n_mod_1} and \eqref{eq_n_mod_2}) and \eqref{eq:862:14} (for \eqref{eq_n_mod_3})
	      \begin{align}%\label{eq_n_mod_h_p}
	      	\frac{n \lambda^k + p^{\alpha(p) k} r_1}{p^{\gamma + \alpha(p) k}} &\equiv s \bmod p^{\alpha(p)}, \label{eq_n_mod_1}\\
		\frac{n \lambda^k + p^{\alpha(p) k} r_1}{p^{\gamma + \alpha(p) k}} &\equiv p^{-\gamma - \alpha(p) k} \bmod h, \label{eq_n_mod_2}\\
		\frac{n \lambda^k + p^{\alpha(p) k} r_1}{p^{\gamma + \alpha(p) k}} &\equiv p^{-\gamma} \bmod \lambda/p^{\alpha(p)}. \label{eq_n_mod_3}
	      \end{align}
%	Since $n\lambda^k+p^{\alpha(p)k}r_1\equiv p^{\alpha(p)k}$ mod~$\lambda/p^{\alpha(p)}$ (as $r_1\equiv 1 \bmod{\lambda}$),
%\begin{align}\label{eq_n_mod_lambda}
%	      	\frac{n \lambda^k + p^{\alpha(p) k} r_1}{p^{\gamma + \alpha(p) k}} &\equiv \frac{p^{\alpha(p) k}}{p^{\gamma + \alpha(p) k}} \equiv p^{-\gamma} \bmod{\lambda/p^{\alpha(p)}}.
%	      \end{align}
%	We are now ready to evaluate $a(n\lambda^k + p^{\alpha(p)k}r_1)$. Using the fact that $a$ is multiplicative (cf.\ the proof of Proposition~\ref{pr_a_q_j}), we find that
%	      \begin{equation}\label{eq:862:71}
%	      \begin{split}
%	      	a(n \lambda^{k} + p^{\alpha(p)k} r_1) &= a(p^{\gamma + \alpha(p) k}) \chi\rb{\frac{n\lambda^k + p^{\alpha(p)k} r_1}{p^{\gamma+\alpha(p) k}}}\\
%	      		&= a(p^{\gamma + \alpha(p) k}) \cdot  \chi_{\lambda/p^{\alpha(p)}}(p)^{-\gamma}.
%	      \end{split}
%	      \end{equation}
	Thus, by \eqref{eq_n_mod_1}, \eqref{eq_n_mod_2}, \eqref{eq_n_mod_3}, the complete multiplicativity of Dirichlet characters and ~\eqref{kreska}, we obtain
	\begin{align*}
		a(n \lambda^k + p^{\alpha(p) k} r_1) &= a(p^{\gamma + \alpha(p) k}) \cdot \chi_{p^{\alpha(p)}}(s) \cdot \chi_{\lambda/p^{\alpha(p)}}(p^{-\gamma}) \cdot \chi_{h}(p^{-\gamma - \alpha(p) k})\\
			&= \chi_{p^{\alpha(p)}}(s) \cdot \frac{a(p^{\gamma + \alpha(p) k})}{\chi(\overline{p})^{\gamma + \alpha(p) k}} \cdot \chi_{\lambda/p^{\alpha(p)}}(p^{\alpha(p)k}).
	\end{align*}

We next consider $a(n\lambda^k + p^{\alpha(p)k}r_2)$. We claim that $m_2 = \nu_p(n \lambda^k + p^{\alpha(p)k}r_2) = \beta + \alpha(p) k$. Indeed, we have by $r_2 - r_1 = r p^{\beta}$,
\begin{align*}
	\frac{n \lambda^k + p^{\alpha(p)k} r_2}{p^{\beta + \alpha(p)k}} = \frac{n \lambda^k + r_1p^{\alpha(p) k} + r p^{\beta + \alpha(p) k}}{p^{\beta + \alpha(p)k}} \equiv r \bmod p^{\alpha(p)},
\end{align*}
as $\gamma \geq \beta + \alpha(p)$. Moreover, we find by $r_1 \equiv r_2 \bmod h \lambda$, and \eqref{eq_n_mod_2}, \eqref{eq_n_mod_3},
\begin{align}\label{eq:862:2}
\frac{n\lambda^k + p^{\alpha(p)k}r_2}{p^{\beta + \alpha(p) k}} &\equiv p^{-\beta - \alpha(p)k} \bmod h,\\
\frac{n\lambda^k + p^{\alpha(p)k}r_2}{p^{\beta + \alpha(p) k}} &\equiv p^{-\beta} \bmod \lambda/p^{\alpha(p)}.
\end{align}
Now, we have by the same arguments as above,
\begin{align*}
	a(n\lambda^k + p^{\alpha(p)k} r_2) &= a(p^{\beta + \alpha(p)k}) \cdot \chi_{p^{\alpha(p)}}(r) \cdot \chi_{h}(p^{-\beta - \alpha(p) k}) \cdot \chi_{\lambda/p^{\alpha(p)}}(p^{-\beta})\\
		&= \frac{a(p^{\beta + \alpha(p)k}) \cdot \chi_{p^{\alpha(p)}}(r) \cdot \chi_{\lambda/p^{\alpha(p)}}(p^{\alpha(p)k})}{\chi(\overline{p})^{\beta + \alpha(p)k}}.
\end{align*}
	Thus, we have in total by \eqref{eq:862:13},
	\begin{align*}
		\chi_{p^{\alpha(p)}}(s) \cdot \frac{a(p^{\gamma + \alpha(p) k})}{\chi(\overline{p})^{\gamma + \alpha(p) k}} = \frac{a(p^{\beta + \alpha(p)k}) \cdot \chi_{p^{\alpha(p)}}(r)}{\chi(\overline{p})^{\beta + \alpha(p)k}},
	\end{align*}
	for all $\gamma \geq \alpha(p) + \beta$ and $0<s<p^{\alpha(p)}$ with $(s,p) = 1$, where the right hand side of the above equality is independent of $\gamma$ and $s$. Thus the claims follow by the same reasoning as in the proof of Lemma~\ref{lem:a(q^gamma)-ev-constant}.
	\end{proof}

	As the product of periodic sequences is again periodic, we conclude the composite case.
	\begin{corollary}
		Suppose that $\lambda$ is not a prime power. Then $a$ is periodic and fulfills trivially Theorem~\ref{th_main}.
	\end{corollary}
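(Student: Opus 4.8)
The plan is to read the statement off from the decomposition in Proposition~\ref{pr_decomposition_1} together with the two periodicity lemmas already established in this section. Recall that Proposition~\ref{pr_decomposition_1} gives, for every $n \in \N$,
\[
	a(n) = \prod_{p \mid h\lambda} \chi_{p^{\alpha(p)}}\rb{\frac{n}{p^{\nu_{p}(n)}}} \cdot \frac{a(p^{\nu_{p}(n)})}{\chi(\overline{p})^{\nu_{p}(n)}},
\]
and I would split this finite product according to whether the prime $p$ divides $h$ or divides $\lambda$. Each factor with $p \mid h$ is periodic by Lemma~\ref{lem:a(q^gamma)-ev-constant}. For the factors with $p \mid \lambda$, the standing hypothesis that $\lambda$ is not a prime power guarantees $\lambda/p^{\alpha(p)} > 1$ for every such $p$, which is precisely the input needed to invoke Lemma~\ref{lem:a(p^gamma)-ev-constant}; hence each of those factors is periodic too. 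Since a finite product of periodic sequences is periodic, $a$ is periodic.

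It then remains to observe that a periodic multiplicative sequence trivially satisfies the conclusion of Theorem~\ref{th_main}. Pick any prime $p$. Being periodic, $a$ is $1$-automatic, hence $p$-automatic, and it admits the canonical decomposition \eqref{eq:form-2}: put $f_1(k) = a(p^k)$ for $k \geq 0$, and $f_2(m) = a(m)$ when $p \nmid m$, $f_2(m) = 0$ when $p \mid m$. Then $f_1(0) = a(1) = 1$; $f_1$ is eventually periodic — indeed periodic — because $a$ is periodic and $k \mapsto p^k \bmod d$ is eventually periodic for any modulus $d$ (as in the argument at the end of the proof of Proposition~\ref{pr_v_1}). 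The function $f_2$ is multiplicative by construction, vanishes at the multiples of $p$, and is periodic, being the product of the periodic sequence $a$ with the periodic indicator of the integers coprime to $p$. Thus $a$ is of the form~\eqref{forma} with all the required properties. Since $a(1) = 1$, the sequence $a$ is not identically zero, so Proposition~\ref{lem:unique} shows this decomposition is the unique one of the stated shape; the clause ``unless $a$ is eventually periodic'' in Theorem~\ref{th_main} is vacuously satisfied here, as $a$ is in fact periodic.

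There is essentially nothing hard left at this stage: all the substantive work sits inside Lemmas~\ref{lem:a(q^gamma)-ev-constant} and~\ref{lem:a(p^gamma)-ev-constant}. The only point that genuinely uses the hypothesis of this subsection — and hence the one thing not to overlook — is that ``$\lambda$ is not a prime power'' is exactly what lets Lemma~\ref{lem:a(p^gamma)-ev-constant} cover every prime divisor of $\lambda$, so that the entire product in Proposition~\ref{pr_decomposition_1} collapses into a product of periodic factors.
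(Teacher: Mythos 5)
Your argument is correct and is essentially the same as the paper's: the paper's proof is the single sentence "As the product of periodic sequences is again periodic, we conclude the composite case," relying on exactly the decomposition from Proposition~\ref{pr_decomposition_1} together with Lemmas~\ref{lem:a(q^gamma)-ev-constant} and~\ref{lem:a(p^gamma)-ev-constant}, just as you do. The only slip is the parenthetical "indeed periodic" for $f_1$: since $k \mapsto p^k \bmod d$ is only \emph{eventually} periodic when $\gcd(p,d) > 1$, you should only claim $f_1$ is eventually periodic, which is all Theorem~\ref{th_main} requires.
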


		\section{Proof of Theorem~\ref{th_main} in the sparse case}
		Throughout this section we assume that $a(n) = 0$ for all $n > 1$ with $(n, \lambda h) = 1$.

		The proof of the following proposition can be seen as a variant of Schuetzenberger's proof \cite{Schuetzenberger1968} that no infinitely supported automatic sequence can only be supported on prime numbers.
		\begin{proposition}\label{pr_sparse_p}
			Suppose that $\alpha \mapsto a(p^{\alpha})$ is not finitely supported for some prime $p$.
			Then $a$ is $p$-automatic.
		\end{proposition}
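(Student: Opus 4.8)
The plan is to show that the $\lambda$-kernel of $a$ is finite by exploiting the sparse hypothesis together with the structure of multiplicativity. First I would record the key consequence of sparseness: since $a(n)=0$ for all $n>1$ coprime to $h\lambda$, and $a$ is multiplicative, the support of $a$ is contained in the set of integers all of whose prime factors divide $h\lambda$; in particular, by Lemma~\ref{le_eventually_periodic} applied to the eventually periodic multiplicative function $f_2$ from Proposition~\ref{lem:unique}, $f_2$ is finitely supported. Combined with the hypothesis that $\alpha\mapsto a(p^\alpha)$ is \emph{not} finitely supported, this forces a rigid picture: I expect that along every prime $q\mid h\lambda$ with $q\neq p$ the sequence $\alpha\mapsto a(q^\alpha)$ must be finitely supported, so that (up to finitely many exceptional values of $n$) the support of $a$ lies in a single geometric progression $\{c\cdot p^\alpha : \alpha\ge 0\}$ for finitely many constants $c$, and on this support $a$ is (eventually) periodic in $\alpha$ by Lemma~\ref{podciag}.

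The heart of the argument is a pumping argument in the style of Schützenberger. Suppose toward a contradiction that $a$ is $\lambda$-automatic but $\lambda$ is divisible by some prime $q\neq p$ that genuinely contributes — more precisely, that $a$ is not already $p$-automatic. I would apply the Pumping Lemma (Lemma~\ref{le_pumping_automatic}) to a carefully chosen large element $n$ of the support of $a$, namely $n = c\,p^\alpha$ with $\alpha$ large and in the eventual period of $\alpha\mapsto a(p^\alpha)$ so that $a(n)\neq 0$. The pumped values $n_k = x\lambda^{\ell_1+k\ell_2} + y\lambda^{\ell_1}\frac{\lambda^{k\ell_2}-1}{\lambda^{\ell_2}-1} + z$ all satisfy $a(n_k)=a(n)\neq 0$, so each $n_k$ must again lie in the support, i.e. every prime factor of $n_k$ divides $h\lambda$ and moreover $n_k/p^{\nu_p(n_k)}$ lies in the finite support of $f_2$. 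The point is that a one-parameter family $n_k$ growing geometrically in $k$ cannot stay inside such a thin set unless the "pump" is essentially multiplication by a power of $p$; a divisibility/size analysis of $n_k$ (looking at $\nu_q(n_k)$ for $q\mid h\lambda$, $q\neq p$, which can be read off from the closed form modulo suitable prime powers) should yield a contradiction with $\lambda$ having a prime factor other than $p$ that is actually used, i.e. it shows $a$ must in fact be $p$-automatic. I would then finish by invoking Lemma~\ref{le_power_lambda} to pass from a prime-power base to the prime $p$ itself, and Corollary~\ref{c:KonMul}/Lemma~\ref{podciag} to package the conclusion.

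The main obstacle I anticipate is making the pumping step genuinely force the pump to be "a power of $p$": the closed form for $n_k$ is a sum of three terms with $k$ appearing in several exponents simultaneously, so controlling $\nu_q(n_k)$ uniformly in $k$ requires separating the case $\ell_1$ large (where $z$ dominates the low-order behavior) from the case where the $\lambda^{k\ell_2}$ terms interact, and handling the geometric-series term $[v]_\lambda\lambda^{\ell_1}\frac{\lambda^{k\ell_2}-1}{\lambda^{\ell_2}-1}$ whose valuation at $q$ can fluctuate with $k$. A clean way around this is to choose $n$ so large that $n_0 = n$ already has $\nu_q(n)=0$ for all $q\mid h\lambda$ with $q\neq p$ except those forced, then argue that for $k$ large the term carrying the factor $\lambda^{k\ell_2}$ introduces a prime factor of $\lambda$ other than $p$ (or an unbounded power thereof) into $n_k$, contradicting membership in the finite set $\operatorname{supp} f_2$ after dividing out the $p$-part. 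I expect the bookkeeping here to be the only delicate point; everything else reduces to the structural lemmas already established.
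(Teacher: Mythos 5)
Your overall strategy matches the paper's --- both are pumping arguments in the spirit of Sch\"utzenberger, and you correctly identify that sparsity makes the support of $a$ so thin that a pumped orbit cannot stay inside it unless the pump is compatible with $p$. You also correctly flag where the real difficulty lies. However, the proposal stops at exactly that difficulty and does not supply the two ideas that make the argument actually close.

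First, the paper does not try to control $\nu_q(n_k)$ for all $k$ directly, which (as you observe) fluctuates and is hard to pin down. Instead it chooses the \emph{number of repetitions} of the pump cleverly: since $\lambda^{\ell}$ is coprime to $(\lambda^{\ell}-1)h$ (and, in the subcase $p\mid h$, even to $(\lambda^{\ell}-1)h q^i$), one can take $L$ to be the multiplicative order of $\lambda^\ell$ modulo that quantity. Pumping $L$ (or $nL$) times makes the geometric-series term $1+\lambda^\ell+\cdots+\lambda^{L\ell}$ congruent to $1$ (or $0$) modulo the relevant modulus, so the residue of the pumped value modulo $h\lambda$ (resp.\ $h q^i\lambda$) is \emph{exactly controlled} rather than merely bounded. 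This is what lets one conclude that the pumped value is coprime to $h\lambda/p^{\alpha(p)}$, hence by sparsity a pure power of $p$. Your proposal of ``choose $n$ so large that $\nu_q(n)=0$'' does not achieve this, because the fluctuations you worry about are a function of $k$, not of the starting point $n$.

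Second, even once you know each pumped value $m(n)$ is a power of $p$, you still must conclude that $\lambda$ is a power of $p$; this does not follow from a ``divisibility/size analysis'' in any obvious way. The paper's step here is an equidistribution argument: writing $m(n)=p^{k(n)}$ and comparing growth rates shows $(k+(nL+1)\ell)\log_p\lambda \bmod 1$ converges as $n\to\infty$, which forces $\log_p\lambda\in\Q$ (otherwise that sequence is equidistributed). Your proposal has no counterpart to this, and without it the argument cannot deliver ``$a$ is $p$-automatic.'' Finally, the paper splits into the cases $p\mid h$ (leading to a contradiction, showing this cannot happen) and $p\mid\lambda$ (leading to the conclusion), whereas your write-up blurs these; the two cases need slightly different choices of modulus in the $L$-trick and reach different endpoints.
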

		\begin{proof}
			%We denote by $Q$ the product of all the primes $q$ not dividing $\lambda$ for which there exists some $\alpha$ such that $a(q^{\alpha}) \neq 0$.
			%We first note that for every $p$ that is coprime to $h \lambda$ we have $(p^{\alpha}, h \lambda) = 1$ and, therefore, $a(p^{\alpha}) = 0$ for every $\alpha \geq 1$, as we are in the sparse case.
			%Thus, we only need to consider the prime factors of $h$ and $\lambda$.
			Assume that $\alpha\mapsto a(p^\alpha)$ is not finitely supported. Then $p$ cannot be coprime with $h\lambda$ as we are in the sparse case. So $p|h\lambda$.
There are two cases then: either $p|h$ and we will show that this is in fact impossible, or $p|\lambda$ in which case we will show that $a$ is $p$-automatic.
			
			Suppose first that there exists $q \mid h$ such that $a(q^{\alpha}) \neq 0$ for infinitely many $\alpha \in \N$.
			Thus, we can apply Lemma~\ref{le_pumping_automatic} (the pumping lemma) to $q^i$ for some large $i$.
			Therefore, there exists a decomposition $q^i = x \lambda^{k+\ell} + y \lambda^{k} + z$ (with $\ell\geq1$) such that
			\begin{align}\label{eq_pump_q}
				a(q^i) = a\left(x \lambda^{k+(n+1)\ell}+ y \lambda^{k} \rb{\lambda^{0} + \lambda^{\ell} + \ldots + \lambda^{n \ell}} + z \right) \neq 0
			\end{align}
			for all $n \in \N$.
			%We note (I'M NOT SURE IF I UNDERSTAND THE ARGUMENT BELOW, WHY NOT THE FOLLOWING\footnote{
			%Since $\lambda$ is coprime to $h$ (and hence also to $q$), there exists $L > 0$ such that $\lambda^{L\ell}-1$ is divisible by $(\lambda^\ell - 1)hq^i$. It follows that
			Since $\lambda^\ell$ is coprime with $h$ (hence with $q^i$), it is also coprime with $(\lambda^\ell-1)hq^i$. Therefore, for some $L\geq 1$, we have $\lambda^{L\ell}\equiv 1$ mod~$(\lambda^\ell-1)hq^i$. It follows that
			\[ \lambda^{0} + \lambda^{\ell} + \ldots + \lambda^{(L-1)\ell} = (\lambda^{L\ell} - 1)/(\lambda^\ell - 1) \equiv 0 \bmod{hq^i}.
			 \]
			Let us consider the integer
			\[
				m := x  \lambda^{k + (L+1) \ell}+ y \lambda^{k} \rb{\lambda^{0} + \lambda^{\ell} + \ldots + \lambda^{L \ell}} + z.
			\]			
			%It follows from how $L$ was chosen (and from the observation that $\lambda^{L\ell} \equiv 1 \bmod{hq^i}$) that
			From the definition of $m$ it follows that for some $r\geq1$, we have
			\begin{align*}
				m&=x\lambda^{k+\ell}\lambda^{L\ell}+y\lambda^k(rhq^i+\lambda^{L\ell})+z\\
					&=q^i + x\lambda^{k+\ell}(\lambda^{L\ell}-1) + y\lambda^k(rhq^i+\lambda^{L\ell}-1).
			\end{align*}
			Therefore (as clearly $\lambda^{L\ell}\equiv 1$ mod~$h\lambda$),
			\begin{equation}\label{eq:678:1}
				m \equiv q^i \bmod{hq^i}.
			\end{equation}
			Moreover, as $z\equiv q^i$ mod~$\lambda$, we also have
			\begin{equation}\label{eq:678:2}
				m \equiv q^i \bmod \lambda.
			\end{equation}
	From the two above congruences \eqref{eq:678:1}, \eqref{eq:678:2} it follows that $m \equiv q^i \bmod q^i h \lambda$ and consequently $m/q^i \equiv 1 \bmod h \lambda$.
			Thus, $(q^i, m/q^i) = 1$ and, by the multiplicativity of $a$, we have $a(m) = a(q^i) \cdot a(m/q^i)$.
			Since $(m/q^i, h \lambda) = 1$ and $m/q^i > 1$, we have $a(m/q^i) = 0$, as we are in the sparse case. But by~\eqref{eq_pump_q}, $a(q^i)=a(m)\neq0$ which leads to a contradiction.
			
			Suppose now that $a(p^{\alpha}) \neq 0$ for infinitely many $\alpha$ for some prime $p \mid \lambda$.
			By the same reasoning as before, we find that for some large enough $i$, we can find a decomposition $p^i = x \lambda^{k + \ell}+ y \lambda^{k} + z$ (with $\ell\geq1$) such that
			\begin{align}\label{eq_pump_p}
				a(p^i) = a(x \lambda^{k + (n+1) \ell} + y \lambda^{k} \rb{\lambda^{0} + \lambda^{\ell} + \ldots + \lambda^{n \ell}} + z) \neq 0
			\end{align}
			for all $n \in \N$. %Arguing along similar lines as above, we can find $L > 0$ such that $\lambda^{L\ell} \equiv 1 \bmod h$ and
			Similarly to the previous  case, note that $\lambda^\ell$ is coprime with $(\lambda^\ell-1)h$, so for some $L\geq1$, we have $								\lambda^{L\ell}=1$ mod~$(\lambda^\ell-1)h$, and so also  $\lambda^{nL\ell}=1$~mod $(\lambda^{\ell}-1)h$ for each $n\geq1$. As a consequence,
			\[
				\lambda^0+ \lambda^{\ell} + \dots + \lambda^{nL\ell} \equiv 1 \bmod{h}
			\]
		for all integers $n \in \N$. Thus,
			\begin{align}\label{eq_mn_1}
			\begin{split}
				m(n) &:= x \lambda^{k + (nL+1) \ell} + y \lambda^{k} \rb{\lambda^{0} + \lambda^{\ell} + \ldots + \lambda^{nL \ell}} + z\\
					& \equiv x \lambda^{k + \ell} + y \lambda^k + z = p^i \bmod h.
			\end{split}
			\end{align}
			In particular, $m(n)$ is coprime to $h$ for all $n \in \N$.
			Moreover,
			\begin{align}\label{eq_mn_2}
				m(n) \equiv z \equiv p^i \bmod \lambda,
			\end{align}
			so $m(n)$ is coprime to $h\lambda/p^{\alpha(p)}$.
			
			Since we have $a(m(n)) = a(p^i) \neq 0$ by~\eqref{eq_pump_p} and we are in the sparse case, all prime factors of $m(n)$ divide $h \lambda$. But  by~\eqref{eq_mn_1} and \eqref{eq_mn_2}, we obtain that  $p$ is the only common prime factor of $m(n)$ and $h\lambda$. It follows that for each $n \in \N$ there exists an integer $k(n)$ such that $m(n) = p^{k(n)}$. We can estimate $k(n)$ by
			\begin{align*}			k(n) &= \log_p( m(n) ) = \log_p \left(x \lambda^{k + (nL+1) \ell} + y \lambda^k \frac{\lambda^{(nL+1)\ell} - 1}{\lambda^\ell - 1} + z \right)
			\\& = (k + (nL+1) \ell)\log_p(\lambda) + \log_p \left(x +  \frac{y}{\lambda^\ell - 1}  \right) + o(1)
			\end{align*}
as $n \to \infty$. In particular, the sequence $(k + (nL+1) \ell)\log_p(\lambda) \bmod 1$ converges (in $\R/\Z$) as $n \to \infty$. This is only possible if $\log_p(\lambda) \in \Q$, since otherwise the above sequence would be equidistributed. Consequently, $\lambda$ is a power of $p$ and $a$ is $p$-automatic.
		\end{proof}
		
		\begin{proof}[Proof of Theorem~\ref{th_main} in the sparse case]
			Let us consider the set
			\[ P := \{p \in \P: \alpha \mapsto a(p^{\alpha}) \text{ is not finitely supported} \}.\]
			We distinguish the following cases:\\
			\begin{itemize}[wide]
				\item $\abs{P} \geq 2$: Let $p_1, p_2 \in P$ be distinct. Using Proposition~\ref{pr_sparse_p}, we find that $a$ is $p_1$-automatic and $p_2$-automatic. By Cobham's Theorem~\ref{th_cobham}, $a$ is eventually periodic.
				Moreover, by Lemma~\ref{le_eventually_periodic}, $a$ is either periodic or finitely supported.
				In both cases, $a$ is $p$-automatic for every prime $p$ and we can write it in the form $a(n)=f_1(\nu_p(n))f_2(n/p^{\nu_p(n)})$ (cf.{} Proposition \ref{lem:unique}) with $f_2(n)= a(n)$ for $n$ not divisible by $p$ and $f_1(k) = a(p^k)$ which is the form required by Theorem~\ref{th_main} in view of Lemma~\ref{podciag}.
				
				\item $P = \emptyset$: Since we are in the sparse case, all primes $p$ such that $a(p^\alpha) \neq 0$ for some $\alpha$ are divisors of $h\lambda$. For each prime divisor $p$ of $h\lambda$, there are finitely many exponents $\alpha$ such that $a(p^\alpha) \neq 0$. Hence, there are only finitely many prime powers on which $a$ takes non-zero values. Since $a$ is multiplicative, it follows that $a$ has finite support. It is now easy to write it in the form of Theorem~\ref{th_main} (with $f_1$ and $f_2$ eventually zero).
				
				\item $P = \{p\}$: By Proposition~\ref{pr_sparse_p}, $a$ is $p$-automatic. We can write $a$ as $a(n)=f_1(\nu_{p}(n))f_2(n/p^{\nu_{p}(n)})$, where $f_1(k) = a(p^k)$ is eventually periodic and $f_2$ vanishes on multiples of $p$ and agrees with $a$ elsewhere. In particular, $f_2$ is a sparse automatic multiplicative sequence such that $\alpha \mapsto f_2(q^\alpha)$ has finite support for all primes $q$. By the same argument as in the case $P = \emptyset$, $f_2$ is finitely supported, and hence eventually periodic.
			\end{itemize}
%			
%			
%			 see by Proposition~\ref{pr_sparse_p} that either $a$ is finitely supported or $p$-automatic for some prime $p$, such that $n \mapsto a(n/p^{\nu_p(n)})$ is finitely supported.
%			
%			The second case gives immediately the proof of Theorem~\ref{th_main} and in the first case we note that $a$ is $p$-automatic for every prime $p$ and we can write, as $a$ is multiplicative
%			\begin{align*}
%				a(n) = a(p^{\nu_p(n)}) \cdot a(n/p^{\nu_p(n)}),
%			\end{align*}
%			where $a$ is multiplicative and eventually periodic and $\alpha \mapsto a(p^{\alpha})$ is eventually zero.
		\end{proof}
		
\section{Remarks}\label{s:Remarks}
%Cobham's theorem (Theorem~\ref{th_cobham}) allows us to show the uniqueness of the description we give in Theorem~\ref{th_main}.
%	\begin{proposition}
%		Let $a$ be a multiplicative automatic sequence that is not eventually periodic. Then the decomposition in Theorem~\ref{th_main} is unique.
%	\end{proposition}
%	\begin{proof}
%		First we note that, since $a$ is not eventually periodic, it can not be both $p$ and $q$-automatic for different prime numbers $p, q$.
%		Suppose $a(n) = f_1(\nu_p(n)) \cdot f_2(n/p^{\nu_p(n)})= g_1(\nu_p(n)) \cdot g_2(n/p^{\nu_p(n)})$ then by taking $n=p^k$ we find $f_1(k)f_2(1)=g_1(k)g_2(1)$, so $f_1=g_1$. If $(m,p)=1$ then $f_1(0)f_2(m)=g_1(0)g_2(m)$, so $f_2(m)=g_2(m)$ for all $m$ coprime with $p$. This finishes the proof as $f_2, g_2$ vanish on the multiples of $p$.
%	\end{proof}

\subsection{Completely multiplicative case}
We now derive the main result of \cite{Li2019} from Theorem~\ref{th_main}:

\begin{proposition} Assume that $a$ is a completely multiplicative automatic sequence. Then there exists a prime $p$ such that $$a(n)=\epsilon^{\nu_p(n)}\chi(n/p^{\nu_p(n)}),\;n\in\N,$$ for a root of unity $\epsilon$ and $\chi$  a Dirichlet character  or the support of $a$ is contained in $\{p^k:\:k\geq 0\}$.
\end{proposition}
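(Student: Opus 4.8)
The plan is to run Theorem~\ref{th_main} through the completely multiplicative hypothesis and then identify the two factors of the decomposition explicitly. We may assume $a\not\equiv 0$ (otherwise the support is empty and there is nothing to prove). By Theorem~\ref{th_main}, $a$ is $p$-automatic for some prime $p$ and $a(n)=f_1(\nu_p(n))\,f_2(n/p^{\nu_p(n)})$ with $f_1$ eventually periodic, $f_1(0)=1$, and $f_2$ multiplicative, eventually periodic, vanishing on multiples of $p$; here $f_2(1)=1$ since $a\not\equiv 0$. For $p\nmid n$ we get $a(n)=f_1(0)f_2(n)=f_2(n)$, so $f_2$ agrees with $a$ away from $p$; this, together with $f_2$ vanishing on multiples of $p$, makes $f_2$ completely multiplicative (when $p\mid mn$ both $f_2(mn)$ and one factor of $f_2(m)f_2(n)$ vanish). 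Also $a(p^k)=f_1(k)f_2(1)=f_1(k)$, so $f_1(k)=a(p)^k$; writing $\epsilon:=a(p)$, eventual periodicity of $k\mapsto\epsilon^k$ forces $\epsilon=0$ or $\epsilon^t=1$ for some $t\geq 1$, i.e.\ $\epsilon$ is $0$ or a root of unity.

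Next I would invoke Lemma~\ref{le_eventually_periodic}: $f_2$ is either finitely supported or periodic, and these two cases will produce the two alternatives in the statement. If $f_2$ is finitely supported, then complete multiplicativity gives $f_2(q)=0$ for every prime $q$ (otherwise $f_2(q^j)=f_2(q)^j\neq 0$ for all $j$), so $f_2=\mathbf 1_{\{1\}}$; hence $a(n)=\epsilon^{\nu_p(n)}$ when $n$ is a power of $p$ and $a(n)=0$ otherwise, so the support of $a$ is contained in $\{p^k:k\geq 0\}$.

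The heart of the argument is the remaining case, where $f_2$ is periodic: here I claim that a periodic completely multiplicative function is a Dirichlet character, and I would prove this by descent on the period. If $d\geq 1$ is a period of $f_2$ and $q\mid d$ is a prime with $f_2(q)\neq 0$, then for every $n$
\[
	f_2(n+d/q)\,f_2(q)=f_2\big((n+d/q)q\big)=f_2(nq+d)=f_2(nq)=f_2(n)f_2(q),
\]
so, cancelling $f_2(q)\neq 0$, $d/q$ is again a period of $f_2$. Iterating (the period strictly decreases), one reaches a period $d^{\ast}$ all of whose prime divisors $q$ satisfy $f_2(q)=0$. For such $d^{\ast}$ one checks that $f_2(n)=0$ exactly when $(n,d^{\ast})>1$ — for a prime $q'\nmid d^{\ast}$ one has $f_2\big((q')^{\phi(d^{\ast})}\big)=f_2(1)=1$, hence $f_2(q')\neq 0$, while every prime dividing $d^{\ast}$ is a zero of $f_2$ — and that $n\mapsto f_2(n)$ restricted to integers coprime to $d^{\ast}$ is a homomorphism $(\Z/d^{\ast}\Z)^{\ast}\to\C^{\ast}$ by periodicity and complete multiplicativity. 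Therefore $f_2=\chi$ for a Dirichlet character $\chi$ of modulus $d^{\ast}$.

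It remains to assemble the conclusion in the periodic case. If $\epsilon\neq 0$ then $\epsilon$ is a root of unity and $a(n)=\epsilon^{\nu_p(n)}\chi(n/p^{\nu_p(n)})$ is exactly the required form. If $\epsilon=0$ then $a$ vanishes on all multiples of $p$; since $f_2=\chi$ also vanishes there, $p$ divides the modulus of $\chi$, and comparing on the two classes $p\mid n$, $p\nmid n$ gives $a=\chi$ on all of $\N$. Choosing any prime $p'$ not dividing the modulus of $\chi$ and putting $\epsilon':=\chi(p')$, which is a root of unity, we obtain $a(n)=\chi(n)=(\epsilon')^{\nu_{p'}(n)}\,\chi\big(n/(p')^{\nu_{p'}(n)}\big)$, again the required form. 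The step I expect to be the main obstacle is the descent lemma identifying periodic completely multiplicative functions with Dirichlet characters — although the multiplicativity identity above keeps it short — together with the routine book-keeping of the degenerate cases $a\equiv 0$ and $f_2\equiv 0$.
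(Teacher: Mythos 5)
Your proof is correct and follows the same overall route as the paper: apply Theorem~\ref{th_main}, use complete multiplicativity of $a$ to force $f_1(k)=a(p)^k$ with $a(p)$ a root of unity or $0$, and observe that $f_2$ inherits complete multiplicativity because it vanishes on multiples of $p$. The genuine point of departure is the final step. The paper simply invokes~\cite[Proposition 2.2]{Allouche2018} to conclude that an eventually periodic completely multiplicative function is a Dirichlet character or vanishes for $n>1$; you instead first split via Lemma~\ref{le_eventually_periodic} into the finitely supported and periodic cases, and in the periodic case you give a short self-contained descent argument on the period (dividing the period $d$ by a prime $q\mid d$ with $f_2(q)\neq 0$, using $f_2(n+d/q)f_2(q)=f_2(nq+d)=f_2(n)f_2(q)$) to land on a modulus $d^*$ whose prime divisors are exactly the zeros of $f_2$, after which the Dirichlet-character structure is immediate. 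This buys you independence from the external reference at the cost of a few extra lines. You also treat the degenerate case $\epsilon=a(p)=0$ more carefully than the paper does: the paper only notes that $a$ is then periodic, whereas you observe that $a=\chi$ with $p$ dividing the modulus and then re-express $a$ in the required shape by switching to a prime $p'$ coprime to the modulus, which is exactly the bookkeeping needed to match the statement as written.
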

\begin{proof}
	By Theorem~\ref{th_main} (and Lemma~\ref{le_eventually_periodic}), we can write $a(n) = f_1(\nu_p(n)) \cdot f_2\rb{{n}/{p^{\nu_p(n)}}}$, where $f_2$ is either finitely supported and multiplicative or periodic and multiplicative.
	As $a$ is completely multiplicative, we note that $f_1(k) = f_1(1)^{k}$ and $f_1(1)$ is either a root of unity $\epsilon$ or $0$ as $a$ takes only finitely many values.
	In particular, we have that $a$ is periodic if $f_1(1) = 0$.
	As $a$ is  completely multiplicative, $f_2$ is completely multiplicative because $f_2$ vanishes on the multiples of $p$.
	As $f_2$ is eventually periodic, it follows by~\cite[Proposition 2.2]{Allouche2018} that $f_2$ is either a Dirichlet character or $f_2(n) = 0$ for all $n > 1$. This finishes the proof.
\end{proof}

\subsection{Associated dynamical systems}		
	
When $a:\N \to\C$ is automatic and $A=a(\N)$ is the ``alphabet'' of $a$, then we let $X_a\subset A^{\Z}$	 denote the subshift generated by $a$.\footnote{The set $X_a$ consists of all sequences $x$ in $A^{\Z}$ such that each subword of $x$ appears in $a$.} We hence obtain a topological dynamical system $(X_a,S)$, where $S$ stands for the left shift, $S\big( (x_n)_{n \in \Z} ) = (x_{n+1})_{n \in \Z}$. We recall that $a$ is primitive if and only if $(X_a,S)$ is minimal. In fact, this condition is further equivalent to the assertion that $(X_a,S)$ is strictly ergodic, in which case the unique invariant measure is denoted by $\mu_a$.
	Such subshifts originating from primitive automatic sequences have been thoroughly studied (see for example~\cite{Queffelec2010}).

\begin{remark} In general, the dynamical system generated by an automatic sequence need not be uniquely ergodic, see again for example \cite{Queffelec2010}. However, automatic multiplicative sequences generate uniquely ergodic systems. Indeed,  in the sparse case we have only finitely many primes $p_1,\ldots,p_k$ such that
the support of $a$ is contained in the set
$$
D:=\{p_1^{\alpha_1}... p_k^{\alpha_k}:\: \alpha_i\in\N_0,\;i=1,\ldots,k\}$$ which already follows from Corollary~\ref{c:KonMul}. Now, the set $D$ has upper Banach density zero, so the only invariant measure for the system $(X_a,S)$ is given by the fixed point given by the all zero sequence. In the dense case, by the comments after Lemma~\ref{l:prz3}, $a$ is Toeplitz. Thus, we have minimality for $(X_a, S)$, which implies unique ergodicity for automatic sequences, see for example \cite{Queffelec2010}.\end{remark}

\begin{remark}\label{rmk-M(a)-exists}
 Since $(X_a,S)$ is always uniquely ergodic, all points in $X_a$ are generic. It follows that $a$ itself has to have a mean. In other words the limit
$$
M(a)=\lim_{N\to\infty}\frac1N\sum_{n=1}^Na(n)$$
does exist. We will compute its value in the next subsection.\end{remark}

\subsection{Averages}

Let us briefly discuss the average $M(a)$ of an automatic multiplicative sequence $a(n)$. Recall that $M(a)$ is guaranteed to exists (cf.{} Remark \ref{rmk-M(a)-exists}).

\begin{proposition}
	Let $(a(n))_{n \geq 0}$ be an automatic multiplicative sequence.
	Then $M(a)$
	%exists and
	is given by
	\begin{align}\label{eq_average}
		M(a) = M(f_2) \cdot \sum_{k\geq 0} \frac{f_1(k)}{p^{k}},
	\end{align}
	where $f_1$, $f_2$ and $p$ are given by Theorem~\ref{th_main}.
%	In particular, $M(a)=0$ whenever $M(f_2) = 0$ (in particular, when $f_2$ is finitely supported).
\end{proposition}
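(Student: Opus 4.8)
The plan is to use the decomposition $a(n) = f_1(\nu_p(n)) f_2(n/p^{\nu_p(n)})$ from Theorem~\ref{th_main} and split the range of summation according to the $p$-adic valuation of $n$. Writing $n = p^k m$ with $p \nmid m$, we have $a(n) = f_1(k) f_2(m)$, so
\[
	\sum_{n=1}^{N} a(n) = \sum_{k \geq 0} f_1(k) \sum_{\substack{m \leq N/p^{k} \\ p \nmid m}} f_2(m).
\]
The key input is that $f_2$ is eventually periodic (by Theorem~\ref{th_main}), hence the restricted partial sums $\sum_{m \leq M,\, p \nmid m} f_2(m)$ grow asymptotically linearly: there is a constant $c$ such that $\sum_{m \leq M,\, p\nmid m} f_2(m) = cM + O(1)$ as $M \to \infty$, and this constant $c$ is exactly $M(f_2)$ (note that since $f_2$ vanishes on multiples of $p$, the restriction $p \nmid m$ is automatic and the mean of $f_2$ over all $m \leq M$ is the same $c$). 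So morally the inner sum is $M(f_2) \cdot N/p^{k} + O(1)$, and summing over $k$ gives
\[
	\sum_{n=1}^{N} a(n) \approx M(f_2) \cdot N \sum_{k \geq 0} \frac{f_1(k)}{p^{k}} + (\text{error}),
\]
which after dividing by $N$ yields \eqref{eq_average}.

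First I would make the convergence of $\sum_{k\geq 0} f_1(k)/p^k$ explicit: $f_1$ is bounded (automatic sequences take finitely many values) so the series converges absolutely and geometrically. Next I would bound the error terms. The sum over $k$ effectively ranges only over $k \leq \log_p N$, since for $p^k > N$ the inner sum is empty. For each such $k$ the inner-sum error is $O(1)$ uniformly, and there are $O(\log N)$ values of $k$, contributing $O(\log N)$ total; the tail $\sum_{k > \log_p N} f_1(k) \cdot (N/p^k)$ is $O(1)$ by geometric decay. Dividing by $N$, all error terms vanish in the limit and we are left with exactly $M(f_2) \sum_{k \geq 0} f_1(k)/p^k$. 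One should also separately dispatch the degenerate sparse subcase where $f_1$ and $f_2$ are eventually zero: there $a$ is finitely supported, $M(a) = 0 = M(f_2)$, and the formula holds trivially; and the case $\lambda$ composite where $a$ is periodic, where $M(f_2)$ and $M(a)$ are the obvious averages over a period.

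The main obstacle is establishing the linear growth with the correct constant for the restricted partial sums of $f_2$, and more precisely making sure the "$+O(1)$" is genuinely uniform so that summing $O(\log N)$ copies only costs $O(\log N)$. When $f_2$ is periodic with period $d$, one writes $\sum_{m \leq M,\, p \nmid m} f_2(m) = \lfloor M/d\rfloor \sum_{j=1}^{d} f_2(j) + O(1)$ and identifies $\frac{1}{d}\sum_{j=1}^d f_2(j)$ with $M(f_2)$ (using that $p \mid d$ and $f_2$ vanishes on multiples of $p$, or just that $f_2$ is already $d$-periodic so the residue condition is subsumed); when $f_2$ is finitely supported, the restricted partial sum is eventually the constant $\sum_m f_2(m)$, but then $M(f_2) = 0$ unless $f_2$ is identically supported on $\{1\}$, and in any case $\sum_{n \leq N} a(n)$ is governed by whether $f_1$ has a nonzero mean — here one checks the formula reduces to $M(f_2)=0$ times a convergent series, consistent with $a$ being finitely supported; I would handle this boundary bookkeeping carefully. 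Apart from that, everything is a routine interchange of summation plus a geometric tail estimate.
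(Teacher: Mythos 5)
Your proof is correct and takes essentially the same route as the paper: both decompose $\N$ by $p$-adic valuation, use the eventual periodicity of $f_2$ to evaluate the restricted means, and combine via the geometric series in $f_1(k)/p^k$. The paper packages the error control abstractly (via the identity $M(a)=\sum_{\alpha,r} M(a\,1_{P(\alpha,r)})$ when the densities of the progressions sum to $1$ and $M(a)$ is known to exist), whereas you carry out the $O(\log N)$ error bookkeeping explicitly, which has the small advantage of also establishing that $M(a)$ exists without appealing to unique ergodicity.
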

\begin{proof}
For $\alpha \geq 0$ and $0 \leq r < p$, consider the infinite arithmetic progression $P(\alpha,r) = p^{\alpha+1} \N_0 + p^\alpha r$. Since $\bigcup_{\alpha=0}^{\infty} \bigcup_{r=1}^{p-1} P(\alpha,r) = \N$ and $\sum_{\alpha=0}^{\infty} \sum_{r=1}^{p-1} d(P(\alpha,r)) = 1$, it is an elementary exercise in analysis that
\begin{equation}\label{eq_average-1}
	M(a) = \sum_{\alpha=0}^{\infty} \sum_{r=1}^{p-1} M\left(a 1_{P(\alpha,r)}\right),
\end{equation}
where $1_X$ denotes the characteristic sequence of the set $X$ and $d(X) = M(1_X)$ denotes its density. For each $\alpha \geq 0$ and $1 \leq r < p$ we can compute
\begin{align*}
	M\left(a 1_{P(\alpha,r)}\right)
	&= \lim_{N \to \infty} \frac{1}{p^{\alpha} N} \sum_{n=0}^{N-1} a\left(p^{\alpha+1} n+p^{\alpha}r\right)
	\\& = \frac{f_1(\alpha)}{p^\alpha} \lim_{N \to \infty} \frac{1}{N} \sum_{n=0}^{N-1} f_2\left( pn+r\right).
\end{align*}	
Note that the last limit exists because $f_2$ is eventually periodic. Since $f_2(pn) = 0$ for all $n \geq 0$, we also have $M\left(a 1_{P(\alpha,0)}\right) = 0$. Combining the above formulae, for any $\alpha \geq 0$, we conclude that
\begin{equation}\label{eq_average-2}
	\sum_{r=1}^{p-1} M\left(a 1_{P(\alpha,r)}\right) = \frac{f_1(\alpha)}{p^\alpha} M(f_2).
\end{equation}
Now, \eqref{eq_average} follows by substituting \eqref{eq_average-2} in \eqref{eq_average-1}.
\end{proof}

\begin{remark} Note $M(a) = 0$ if $M(f_2) = 0$, which in particular includes the case when $f_2$ is finitely supported (see Lemma \ref{le_eventually_periodic}). It is also possible that $M(a)=0$ even though $M(f_2) \neq 0$. Indeed, this is the case for instance when $p=2$, $f_1$ is given by $f_1(0)=1$, $f_1(k)=-1$ for all $k\geq1$, and $f_2$ is any principal character.

Finally, note that in the dense case $a$ is never aperiodic because $a$ is Toeplitz (see the remarks after Lemma~\ref{l:prz3}).

\end{remark}

		\bibliographystyle{abbrv}
 		\bibliography{bibliography_general}
\end{document}